\newcommand\blfootnote[1]{%
  \begingroup
  \renewcommand\thefootnote{}\footnote{#1}%
  \addtocounter{footnote}{-1}%
  \endgroup
}
\newtheorem{lemma}{Lemma}
\numberwithin{lemma}{section}
\newtheorem{theorem}{Theorem}
\newtheorem{prop}{Proposition}
\newtheorem{defin}{Definition}
\newtheorem{cor}{Corollary}
\newtheorem{rem}{Remark}
\newtheorem*{hypo*}{Conjecture}
\title{Erdős inequality for primitive sets}
\author{Petr Kucheriaviy}
\date{}
\begin{document}

\maketitle

\blfootnote{The author was supported by the Basic Research Program of the National Research University Higher School of Economics.}

\begin{abstract}
A set of natural numbers $A$ is called primitive if no element of $A$ divides any other. 
Let $\Omega(n)$ be the number of prime divisors of $n$ counted with multiplicity. 
Let $f_z(A) = \sum_{a \in A}\frac{z^{\Omega(a)}}{a (\log a)^z}$, where $z \in \mathbb{R}_{> 0}$. 
Erdős proved in 1935 that $f_1(A) = \sum_{a \in A}\frac{1}{a \log a}$ is uniformly bounded over all choices of primitive sets $A$. We prove the same fact for $f_z(A)$, when $z \in (0, 2)$. 
Also we discuss the $\lim_{z \to 0} f_z(A)$.
Some other results about primitive sets are generalized. 
In particular we study the asymptotic of $f_z(\mathbb{P}_k)$, where $\mathbb{P}_k = \{ n : \Omega(n) = k \}$. In case of $z = 1$ we find the next term in asymptotic expansion of $f_1(\mathbb{P}_k)$ compared to the recent result of Gorodetsky, Lichtman, Wong.
\end{abstract}

\tableofcontents

\section{Introduction and results}

\subsection{Erdős inequality}

Let $S$ be a partially ordered set. A subset $A \subset S$ is called an antichain if every two distinct elements of $A$ are incomparable. 

By primitive set we will mean an antichain of the set of natural numbers equipped with the relation of divisibility.

So by definition $A \subset \mathbb{N}$ is called primitive if for any $a_1, a_2 \in A$ such that $a_1|a_2$, we have $a_1 = a_2$.

$A = [n, 2n)$ is an example of a primitive set.
We denote by $\omega(n)$ the number of distinct prime divisors of $n$. And $\Omega(n)$ stands for the number of prime divisors of $n$ counted  with multiplicity. $\mathbb{P}_{k} = \{ n : \Omega(n) = k\}$ is another example of primitive set (including $\mathbb{P}_0 = \{ 1 \}$). See \cite[Chapter V]{Sequences} for an introduction to the subject of primitive sets.

By $\mathbb{P}$ we denote the set of prime numbers. Let us denote by $p(n)$ and $P(n)$ the minimal and the maximal prime divisors of $n$. 

Erdős in \cite{Erdős_original} proved that for any primitive set $A$
\begin{equation} \label{Erd1}
\sum_{a \in A} \frac{1}{a} \prod_{p \le P(a)} \left(1 - \frac{1}{p} \right) \le 1,
\end{equation}
Here $p$ runs over primes.

From (\ref{Erd1}) Erdős deduced that for any primitive set $A$, $\sum_{a \in A} \frac{1}{a \log a} < M$, where $M$ is an absolute constant.

Indeed:
\[
\sum_{a \in A} \frac{1}{a \log a} \le \sum_{a \in A} \frac{1}{a \log P(a)} \ll \sum_{a \in A} \frac{1}{a} \prod_{p \le P(a)} \left(1 - \frac{1}{p} \right) \le 1.
\]

\begin{defin}
    Let $(\mathbb{P}, \preceq)$ be the set of primes equipped with some linear order. Let $p'(n)$ and $P'(n)$ denote the minimal and the maximal prime divisors of $n$ with respect to the order $\preceq$. Also set $p'(1) = +\infty$. Let $L_a \vcentcolon = \{ a b : b \in \mathbb{N}, P'(a) \preceq p'(b)\}$. A set $A \subset \mathbb{N}$ is called $L$-primitive with respect to $\preceq$ if $a' \notin L_a$ for all distinct $a, a' \in A$. 
\end{defin}
The notion of $L$-primitive sets for the primes ordered by their absolute value was introduced in \cite{Lichtman}. We will call a set $L$-primitive in this sense if the order on primes is not specified. 

Obviously, any primitive set is $L$-primitive with respect to any order, but an $L$-primitive set needs not to be primitive. 

Inspired by the work of Erdős \cite{Erdős_original}, 
we prove a generalization of inequality (\ref{Erd1}):


\begin{theorem} \label{main}
Let $(\mathbb{P}, \preceq)$ be the set of primes with some linear order. Let $A$ be an $L$-primitive set with respect to $\preceq$. Let $f$ be a completely multiplicative function such that $0 \le f(p) \le 1$ for each prime. 
\nopagebreak
Then
\begin{equation} \label{main_ineq}
\sum_{a \in A} f(a) \prod_{p \prec P'(a)} \left(1 - f(p) \right) \le 1.
\end{equation}
\end{theorem}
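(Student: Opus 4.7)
The plan is to follow Erdős's original disjointness strategy, the main new ingredient being the following lemma: \emph{if $A$ is $L$-primitive with respect to $\preceq$, then the sets $L_a$ for $a\in A$ are pairwise disjoint}. Granting this, the inequality \eqref{main_ineq} drops out of a short Euler-product manipulation in which complete multiplicativity of $f$ is used in an essential way.

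For the disjointness lemma, suppose $n\in L_a\cap L_{a'}$ with $a\ne a'$, so $n=ab=a'b'$ where $p'(b)\succeq P'(a)$ and $p'(b')\succeq P'(a')$; WLOG $P'(a)\preceq P'(a')$. The goal is to show $c:=a'/a$ is a positive integer with $p'(c)\succeq P'(a)$, so that $a'\in L_a$, contradicting $L$-primitivity. This reduces to checking $v_p(c)\ge 0$ for every prime $p$ and $v_p(c)=0$ for $p\prec P'(a)$. Splitting the primes into the ranges $p\prec P'(a)$, $P'(a)\preceq p\preceq P'(a')$, and $p\succ P'(a')$, and exploiting that $v_p(b)=0$ for $p\prec P'(a)$, $v_p(b')=0$ for $p\prec P'(a')$, $v_p(a)=0$ for $p\succ P'(a)$, and $v_p(a')=0$ for $p\succ P'(a')$, the identity $v_p(a)+v_p(b)=v_p(a')+v_p(b')$ yields the required inequality in each range.

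Assume next that $f(p)<1$ for every prime; the remaining case is handled by the approximation $f\mapsto tf$ with $t\uparrow 1$ applied to each finite partial sum. Fix a finite $A_0\subset A$ and a prime $P$ so large (in $\preceq$) that $P'(a)\preceq P$ for every $a\in A_0\setminus\{1\}$. Complete multiplicativity of $f$ gives
\[
\sum_{\substack{n\in L_a\\ P'(n)\preceq P}}f(n)=f(a)\sum_{\substack{b\in\mathbb{N}\\ P'(a)\preceq p'(b),\,P'(b)\preceq P}}f(b)=f(a)\prod_{P'(a)\preceq p\preceq P}\frac{1}{1-f(p)},
\]
together with $\sum_{P'(n)\preceq P}f(n)=\prod_{p\preceq P}(1-f(p))^{-1}$. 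Combining these identities with pairwise disjointness of the $L_a$ produces
\[
\sum_{a\in A_0}f(a)\prod_{P'(a)\preceq p\preceq P}\frac{1}{1-f(p)}\le\prod_{p\preceq P}\frac{1}{1-f(p)};
\]
dividing both sides by the right-hand side gives $\sum_{a\in A_0}f(a)\prod_{p\prec P'(a)}(1-f(p))\le 1$, and sending $A_0\uparrow A$ finishes the proof.

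The only substantive obstacle is the disjointness lemma, which demands the $p$-adic case analysis above. Once that is settled, the remainder is a direct generalization of Erdős's Euler-product computation, with the probability weight $1/p$ systematically replaced by $f(p)$.
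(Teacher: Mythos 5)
Your overall strategy is the same as the paper's: establish that the sets $L_a$, $a\in A$, are pairwise disjoint, then run the Euler-product computation for a completely multiplicative weight and divide. However, two steps need repair. The smaller one is in your disjointness argument: the reduction ``WLOG $P'(a)\preceq P'(a')$, then $c=a'/a$ is a positive integer'' breaks down in the tie case $P'(a)=P'(a')=q$. For instance $a=q^3$, $a'=q^2$, $n=q^5$ lies in $L_a\cap L_{a'}$, yet $a'/a$ is not an integer; at the prime $p=q$ your valuation identity only gives $v_q(c)=v_q(b)-v_q(b')$, which has no sign. You must break the tie further (e.g.\ WLOG $v_q(a)\le v_q(a')$ when $P'(a)=P'(a')$, and otherwise conclude $a\in L_{a'}$ instead of $a'\in L_a$); the paper avoids this by observing that every element of $L_a$ determines $a$ as a ``prefix product'' of its $\preceq$-sorted prime factorization.

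The substantive gap is the finitization. You truncate by the condition $P'(n)\preceq P$ with $P$ the $\preceq$-largest prime among the $P'(a)$, $a\in A_0$, but for a general linear order the initial segment $\{p:p\preceq P\}$ need not be finite, and then none of your identities is a statement about finite quantities. Concretely, order the primes as $2\prec 5\prec 7\prec 11\prec\cdots\prec 3$ and take $f(p)=1/2$ for all $p$: if some $a\in A_0$ is divisible by $3$ then $P=3$, the sum $\sum_{P'(n)\preceq P}f(n)=\prod_{p\preceq P}(1-f(p))^{-1}$ diverges, and so may the interval products $\prod_{P'(a)\preceq p\preceq P}(1-f(p))^{-1}$; the inequality you write is then ``finite or infinite $\le\infty$'' and the step ``dividing both sides by the right-hand side'' is meaningless. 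The paper's proof fixes exactly this: for a finite $A_0$ it replaces $f$ by the completely multiplicative $g$ with $g(p)=f(p)$ on the finitely many primes dividing $\prod_{a\in A_0}a$ and $g(p)=0$ elsewhere (after modifying the order so these primes form an initial segment), notes that dropping factors $1-f(p)\le 1$ from $\prod_{p\prec P'(a)}(1-f(p))$ only increases the left-hand side, and only then divides by the now convergent $\sum_n g(n)$. With that replacement (and the tie-break above) your argument becomes complete and coincides with the paper's.
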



\subsection{Erdős functions of primitive sets}

Taking $f(p) = z/p$, where $0 < z < 2$, Theorem \ref{main} implies that for each primitive set $A$
\begin{equation} \label{first ineq}
\sum_{a \in A} \frac{z^{\Omega(a)}}{a (\log a)^z} \le \sum_{a \in A} \frac{z^{\Omega(a)}}{a (\log P(a))^z}  \ll \sum_{a \in A} \frac{z^{\Omega(a)}}{a} \prod_{p < P(a)} \left( 1 - \frac{z}{p} \right) \le 1.
\end{equation}

This suggests to introduce a function 
\[
f_z(a) \vcentcolon = \frac{z^{\Omega(a)}}{a (\log a)^z}, \qquad f_z(A) \vcentcolon = \sum_{a \in A} f_z(a),
\]
which is well defined for $z > 0$ and any $A \subset \mathbb{N}$. If the series diverges, then we write $f_z(A) = \infty$.

We call $f_z(A)$ the Erdős function of $A$. 
The sum $f_1(A) = \sum_{a \in A} \frac{1}{a \log a}$ is the classical Erdős sum for $A$.

Let us denote 
\[
\gamma_k(z) \vcentcolon = f_z(\mathbb{P}_k).
\]

The functions $\gamma_k(z)$ can be effectively computed in the similar way as $\gamma_1(1)$ was computed in \cite{Cohen} and $\gamma_k(1)$ were computed in \cite{Lichtman2}. See Proposition \ref{compute_f_k} for details.

In \cite{Lichtman2} it was proved by Lichtman that $\lim_{k \to \infty} \gamma_k(1) = 1$ and that $\gamma_6(1)$ is minimal among $\gamma_k(1)$.
Gorodetsky, Lichtman, Wong in \cite{Gorodetsky} found the second term in asymptotic expansion of $\gamma_k(1)$ as $k$ approaches infinity. 
We improve the result of \cite{Gorodetsky} by finding the third term in asymptotic expansion of $\gamma_k(1)$. 
We also study $\gamma_{k}(z)$ in the range $0 < z \le 2$.

The idea is to study the sum $a^{-1} (\log a)^{-z}$ over numbers of the form $2^{j}b$, where $(b, 2) = 1, \, \Omega(b) = k - j$, using the technique from \cite{Gorodetsky}, and then sum over $j$. 
It turns out that the largest error term comes from $j$  in the neighbourhood of $k - 2\log k$.

\begin{theorem}\label{gamma_k(z) main thm}
Denote
\[
G(z) = \frac{1}{\Gamma(1 + z)} \prod_p \left( 1 - \frac{z}{p} \right)^{-1} \left(1 - \frac{1}{p} \right)^z,
\quad \quad
d_w = 2^{-w} \prod_{p > 2} \left(1 - \frac{w}{p} \right)^{-1} \left( 1 - \frac{1}{p} \right)^w.
\]
By $\gamma$ we denote the Euler-Mascheroni constant. Let $\varepsilon > 0$, then 
$\gamma_{k}(z) = $
\[
\begin{cases}
G(z) + \left( \frac{z}{z + 1} \right)^k \frac{2 d_{z + 1}}{\Gamma(z) (1 - z)}\left(\gamma -\frac{z \log 2 }{1 - z} - \sum_{p > 2} \frac{z \log p}{(p - 1) (p - z - 1)} \right) + O_{\varepsilon} \left( \left( \frac{z}{2} \right)^k k^{2 - z} \right) & (z \in [\varepsilon, 1 - \varepsilon]), \\
 1 - 2^{-k} \frac{d_2}{4} (\log 2) (k^2 - 4k \log k + O(k \sqrt{\log k})) & (z = 1), \\
G(z) + O_{\varepsilon} \left( \left( \frac{z}{2} \right)^k k^{2 - z} \right) & (z \in [1 + \varepsilon, 2 - \varepsilon]), \\
\frac{d_2}{2} \left( k - 2 \log k + O(\sqrt{\log k}) \right) & (z = 2).
\end{cases}
\]
For any $0 < \varepsilon < 1$ uniformly in $z \in (0, 2 - \varepsilon]$ we have 
\[
\gamma_{k}(z) = G(z) + O_{\varepsilon}\left( \left(1 - \frac{\varepsilon}{2} \right)^k k^{\varepsilon} \right).
\]
\end{theorem}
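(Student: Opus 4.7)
\emph{Setup.} Following the hint in the paper, I factor every $n$ with $\Omega(n) = k$ uniquely as $n = 2^j b$ with $b$ odd and $\Omega(b) = k-j$:
\[
\gamma_k(z) = z^k \sum_{j=0}^{k} 2^{-j}\, T_{k-j}(j, z),\qquad T_m(j, z) := \sum_{\substack{(b,2)=1\\ \Omega(b)=m}}\frac{1}{b\,(\log(2^j b))^z}.
\]
Using $(\log N)^{-z} = \Gamma(z)^{-1}\int_0^\infty y^{z-1}N^{-y}\,dy$ for $N>1$ and the coefficient extraction $\sum_{(b,2)=1,\,\Omega(b)=m}b^{-s} = [w^m]F_{\mathrm{odd}}(s,w)$, each inner sum becomes
\[
T_m(j, z) = \frac{1}{\Gamma(z)}\int_0^\infty y^{z-1}\,2^{-jy}\,[w^m]F_{\mathrm{odd}}(1+y, w)\,dy,
\]
with $F_{\mathrm{odd}}(s,w) = (1 - w/2^s)\,\zeta(s)^w\,H(s,w)$ and $H(s,w) = \prod_p(1 - w/p^s)^{-1}(1-1/p^s)^w$ analytic on a neighborhood of $\{\Re s > 1/2\}\times\{|w|<2\}$.

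\emph{Main term and subleading singularity.} Summing the $j$-sum restores the full Dirichlet series, so $\sum_{k\geq 0}\gamma_k(z)\,t^k = \Gamma(z)^{-1}\int_0^\infty y^{z-1}(F(1+y, zt) - 1)\,dy$ with $F(s,u) = \zeta(s)^u H(s,u)$. Setting $u = zt$ and expanding
\[
\zeta(1+y)^u H(1+y, u) = y^{-u}\sum_{m\ge 0} a_m(u)\,y^m,\qquad a_0(u) = H(1,u),\quad a_1(u) = \partial_s H(1,u)+\gamma u H(1,u),
\]
term-by-term integration meromorphically continues the generating function in $u$ with simple poles at $u = z, z+1,\dots$ and residues $-a_m(z+m)/\Gamma(z)$. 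Picking these up via $[u^k](a-u)^{-1}=a^{-k-1}$ and multiplying by $z^k$, the $u=z$ pole yields $H(1,z)/\Gamma(z+1) = G(z)$, while the $u=z+1$ pole yields $(z/(z+1))^k\,a_1(z+1)/(\Gamma(z)(z+1))$. Using $H(1,z+1) = 2d_{z+1}/(1-z)$ (by separating the $p=2$ Euler factor) and
\[
\partial_s\log H(s,u)\big|_{s=1} = u(1-u)\sum_p\frac{\log p}{(p-u)(p-1)}
\]
at $u=z+1$ reproduces exactly the bracket $\gamma - z\log 2/(1-z) - \sum_{p>2}z\log p/((p-1)(p-z-1))$.

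\emph{Error, saddle-point and boundary cases.} The factor $(1-w/2^{1+y})^{-1}$ generates an additional singularity of the generating function along $u\ge 2$; expanded geometrically into the $j$-sum, the summand $2^{-j}\,T_{k-j}(j,z)$ concentrates around $j\approx k - 2\log k$, where the decay $2^{-j}$ balances the Landau--Selberg density of odd $b$'s with $\Omega(b)=k-j$. A Laplace expansion at this saddle (of width $\sqrt{\log k}$) produces the announced $(z/2)^k\,k^{2-z}$ error, and the uniform bound $O_\varepsilon((1-\varepsilon/2)^k k^\varepsilon)$ for $z\in(0,2-\varepsilon]$ follows because $\max(z/(z+1),z/2)\le 1-\varepsilon/2$ on this range. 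The boundary cases must be treated as limits: at $z=1$ the factor $(1-z)^{-1}$ in $H(1,z+1)$ blows up precisely as the pole at $u=z+1=2$ collides with the Euler pole at $u=2$, and a double-pole Laurent expansion of the combined residue yields the logarithmic correction $k^2 - 4k\log k$; at $z=2$ the main pole at $u=z=2$ merges with the Euler pole, $G(z)$ diverges, and $\gamma_k(2)$ instead grows linearly with leading coefficient $d_2/2$. The main technical obstacle will be the uniform saddle-point analysis of the $j$-sum near $j = k - 2\log k$ and the delicate Laurent extractions needed to isolate the $O(k\sqrt{\log k})$ remainders in the $z=1,2$ cases.
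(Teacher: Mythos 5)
Your setup and the extraction of the first two terms are correct and, despite the repackaging, essentially the paper's route: your poles of the $u$-generating function at $u=z+m$ are exactly the poles of the paper's kernel $h_{j,z}(w)$ at $w=z+m$, and your residue computation is right — indeed $H(1,z+1)=\frac{2d_{z+1}}{1-z}$ and $\partial_s\log H(1,u)=u(1-u)\sum_p\frac{\log p}{(p-1)(p-u)}$ reproduce the stated bracket, and the $u=z$ pole gives $G(z)$. (Minor slip: $H(s,w)$ is not analytic on all of $\{\Re s>1/2\}\times\{|w|<2\}$, since $2^s=w$ is possible there; you only need $\Re s\ge 1$, where this is harmless.)

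The genuine gap is that everything beyond the two residues — which is the bulk of the theorem — is asserted rather than proved, and the mechanism you propose for the boundary cases would not deliver the stated answers. The error term $O((z/2)^k k^{2-z})$, its uniformity in $z$, and the extra $\sqrt{\log k}$ at $z\in\{1,2\}$ require quantitative control of the generating function near the non-isolated singularity at $u=2$: the poles $u=p^{1+y}$ of $F(1+y,u)$ sweep out a cut $[2,\infty)$ accumulating at $u=2$, and it is precisely this accumulation (equivalently, in the $j$-decomposition, the range $j\approx k-2\log k$) that the paper spends Lemmas \ref{a^delta}, \ref{j^Re}, \ref{int bound j^R} and a lengthy case analysis on; you defer all of this as a ``technical obstacle''. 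More pointedly, at $z=1$ a ``double-pole Laurent expansion of the combined residue'' cannot be the right mechanism: the naive polar part $\frac{a_1(u)}{2-u}$ is in fact a triple pole at $u=2$ (since $\partial_s H(1,u)$ has a double pole there), which would account for the $k^2 2^{-k}$ size, but no finite-order Laurent expansion at an isolated point can produce the $-4k\log k$ (nor the $-2\log k$ at $z=2$); those logarithms come from the interaction of the pole with the cut — in the paper's language, from the truncation of the $j$-sum at $j=k-2\log k$ dictated by the $R_{k,j}$ bounds, with the $O(k\sqrt{\log k})$ and $O(\sqrt{\log k})$ remainders coming from the saddle width. So as it stands the proposal verifies the shape and constants of the main and secondary terms but does not prove the error exponent $k^{2-z}$, the uniform bound on $(0,2-\varepsilon]$, or the $z=1,2$ expansions, and its sketch of the latter two is not correct as stated.
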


Also note that we have $f_z(2^k) = \frac{z^k}{k 2^k \log 2}$ which tends to infinity with $k$ for $z > 2$.

\begin{rem}
    One can follow \cite{Lichtman2} and prove that for $\varepsilon > 0, \delta > 0$ and uniformly in $0 < z < 2 - \varepsilon$ 
    \[
    \gamma_k(z) = G(z) + O_{\delta, \varepsilon}(k^{-1/2 + \delta}),
    \]
    using  partial summation and Sathe-Selberg theorem (see Lemma \ref{Selb}). 
\end{rem}

As an application of Theorem \ref{gamma_k(z) main thm} we prove the following
\begin{prop} \label{H-L Tauber}
    Suppose that $\sum_{1 \le k \le x} h(k) \sim c x$, where $h(k) \in \mathbb{C}$. And suppose that for each $z > 1$ the sum $\sum_{n = 2}^{\infty} \frac{h(\Omega(n))}{n (\log n)^z}$ is absolutely convergent (for example this is the case if $h(k) \in \mathbb{R}_{\ge 0}$ for all $k$). Then
    \[
    \lim_{z \to 1+} (z - 1) \sum_{n = 2}^{\infty} \frac{h(\Omega(n))}{n (\log n)^z} = c.
    \]
\end{prop}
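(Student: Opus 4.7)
The plan is to group terms by $k = \Omega(n)$ and reduce everything to the asymptotics of $\gamma_k(z)$ supplied by Theorem~\ref{gamma_k(z) main thm}. Since $\gamma_k(z) = z^k \sum_{\Omega(n) = k} 1/(n (\log n)^z)$, rearranging gives
\[
\sum_{n \ge 2} \frac{h(\Omega(n))}{n (\log n)^z} = \sum_{k \ge 1} h(k) \frac{\gamma_k(z)}{z^k}.
\]
Fix some $\varepsilon \in (0,1)$ (for concreteness $\varepsilon = 1/2$) and apply the uniform bound from Theorem~\ref{gamma_k(z) main thm}: $\gamma_k(z) = G(z) + O_\varepsilon((1-\varepsilon/2)^k k^\varepsilon)$ uniformly in $z \in (0, 2-\varepsilon]$. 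Dividing by $z^k$ and using $z^{-k} \le 1$ for $z \ge 1$ in the error, we get, for $z \in (1, 2-\varepsilon]$, the decomposition
\[
\sum_{k \ge 1} h(k) \frac{\gamma_k(z)}{z^k} \;=\; G(z) \sum_{k \ge 1} \frac{h(k)}{z^k} \;+\; R(z), \qquad |R(z)| \le C \sum_{k \ge 1} |h(k)| (1-\varepsilon/2)^k k^{\varepsilon}.
\]

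The next step is to control the error $R(z)$, for which I would extract a crude growth estimate on $|h(k)|$ from the absolute convergence hypothesis. The same asymptotic shows $\gamma_k(z)/z^k \ge G(z)/(2 z^k) > 0$ for $k$ large and fixed $z > 1$, so absolute convergence of $\sum_n |h(\Omega(n))|/(n (\log n)^z)$ forces $\sum_k |h(k)|/z^k < \infty$ for every $z > 1$. Equivalently, the power series $\sum |h(k)| x^k$ has radius of convergence at least $1$, so $|h(k)| = O(\rho^k)$ for any $\rho > 1$. Choosing $\rho$ with $\rho(1-\varepsilon/2) < 1$ makes $\sum |h(k)| (1-\varepsilon/2)^k k^{\varepsilon}$ converge to a finite constant, so $R(z) = O(1)$ uniformly in $z \in (1, 2-\varepsilon]$ and hence $(z-1) R(z) \to 0$ as $z \to 1^+$.

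For the main term I would use the elementary Abelian direction of the Hardy--Littlewood Tauberian theorem. Setting $H(N) := \sum_{k \le N} h(k)$, partial summation gives $\sum_k h(k) t^k = (1-t) \sum_k H(k) t^k$, and the assumption $H(k) = ck + o(k)$ together with the identity $(1-t)^2 \sum_{k \ge 1} k t^k = t$ yields $(1-t) \sum_k h(k) t^k \to c$ as $t \to 1^-$. Substituting $t = 1/z$ gives $(z-1) \sum_k h(k)/z^k \to c$ as $z \to 1^+$, and since $G(z)$ is continuous at $z = 1$ with $G(1) = \prod_p (1-1/p)^{-1}(1-1/p) = 1$, we conclude $(z-1) G(z) \sum_k h(k)/z^k \to c$. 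Adding the vanishing error proves the proposition. The main subtlety is the need for a uniform-in-$z$ version of $\gamma_k(z) \to G(z)$, which is exactly what Theorem~\ref{gamma_k(z) main thm} supplies; everything else is standard partial summation.
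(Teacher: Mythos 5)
Your proof is correct and follows essentially the same approach as the paper's: both rearrange the series as $\sum_k h(k)\gamma_k(z) z^{-k}$, invoke the uniform asymptotic $\gamma_k(z) = G(z) + O_\varepsilon((1-\varepsilon/2)^k k^\varepsilon)$ from Theorem~\ref{gamma_k(z) main thm}, and finish with an Abelian/partial-summation argument using $G(1)=1$. The only organizational difference is that you explicitly separate the main term $G(z)\sum_k h(k) z^{-k}$ from the error and control the error via the crude bound $|h(k)| = O(\rho^k)$ extracted from absolute convergence, whereas the paper applies partial summation directly to $\sum_{k\le x} h(k)\gamma_k(z)$ without ever bounding $|h(k)|$ term-by-term; both routes are sound.
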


In 1986, Erdős \cite[Conjecture 2.1]{Erd_conj} asked if the maximum of $\sum_{a \in A} \frac{1}{a \log a}$ among all primitive sets is attained by $A = \mathbb{P}$. In 2022 this Conjecture was proved by Lichtman \cite{Lichtman} (see \cite{Lichtman} for the history of progress towards the proof of this Conjecture).

In the same manner we are interested in
\[
U(z) \vcentcolon = \sup_{A \,\, \text{primitive}} f_z(A).
\]

The basic properties of $U(z)$ are
\begin{theorem} \label{g behaviour}
\,

    A) $U(z)$ is bounded on $(0, 2 - \varepsilon]$ for each $\varepsilon > 0$,

    B) $U(z) = \infty$ for $z \ge 2$,

    C) $\lim_{z \to 2-} U(z) = \infty$.
\end{theorem}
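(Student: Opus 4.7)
The approach is to reuse the chain of inequalities from (\ref{first ineq}) while keeping careful track of uniformity in $z$. The key ingredient to establish is a Mertens-type bound of the form
\[
\prod_{p < y}\Bigl(1 - \tfrac{z}{p}\Bigr)^{-1} \;\ll_{\varepsilon}\; (\log y)^{z} \qquad (y \geq 3,\; z \in (0, 2-\varepsilon]),
\]
with constant independent of $z$ in this range. This should follow by isolating the $p = 2$ factor (which is at most $2/\varepsilon$), expanding $-\log(1 - z/p) = z/p + O((z/p)^2)$ uniformly for $p \geq 3$ (since then $z/p \leq 2/3$), and applying $\sum_{p \leq y} 1/p = \log \log y + O(1)$. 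Elements of $A$ with $P(a) = 2$ are powers of $2$, so by primitivity there is at most one such, contributing $O(1)$ to $f_z(A)$. For all other $a \in A$ one has $\log a \geq \log P(a) \geq \log 3$, and combining the Mertens estimate with (\ref{main_ineq}) applied to $f(p) = z/p$ yields $f_z(A) \leq C_\varepsilon$ uniformly. The main obstacle lies precisely in this uniformity check: the $z$-dependence that could blow up near $z = 2$ lives entirely in the single $p = 2$ factor and is quantified by $\varepsilon$.

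\textbf{Part B.} For $z > 2$ the singleton primitive set $\{2^k\}$ suffices, since $f_z(\{2^k\}) = (z/2)^k/(k \log 2)^z \to \infty$ as $k \to \infty$. For $z = 2$, the set $\mathbb{P}_k$ is primitive (if $a \mid b$ with $a, b \in \mathbb{P}_k$ and $a \ne b$, then $\Omega(b) > \Omega(a) = k$, contradiction), and Theorem \ref{gamma_k(z) main thm} gives $\gamma_k(2) = \tfrac{d_2}{2}(k - 2\log k + O(\sqrt{\log k}))$, which tends to infinity. Hence $U(z) = \infty$ for all $z \geq 2$.

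\textbf{Part C.} The plan is to use the same test sets $\mathbb{P}_k$. For any fixed $z_0 \in (1, 2)$, Theorem \ref{gamma_k(z) main thm} (third case) gives $\gamma_k(z_0) \to G(z_0)$ as $k \to \infty$, so $U(z_0) \geq \sup_k \gamma_k(z_0) \geq G(z_0)$. It then remains to observe that $G(z) \to \infty$ as $z \to 2-$: in the Euler product defining $G(z)$, the factor $(1 - z/2)^{-1}$ diverges as $z \to 2-$, while $\Gamma(1+z)$ stays bounded and bounded away from zero and the product over $p \geq 3$ converges uniformly in $z \in [1,2]$ thanks to the identity $(1 - z/p)^{-1}(1 - 1/p)^z = \exp(O(1/p^2))$. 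This yields $U(z) \to \infty$.
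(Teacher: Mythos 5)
Your proof is correct and takes essentially the same route as the paper. The paper's own proof of this theorem is just two sentences (citing the uniformity of~(\ref{first ineq}) for~A, and Theorem~\ref{gamma_k(z) main thm} together with the pole of $G(z)$ at $z=2$ for B and~C); your write-up simply fills in the details the paper glosses over, chiefly the uniform Mertens estimate $\prod_{p<y}(1-z/p)^{-1}\ll_{\varepsilon}(\log y)^z$ needed to make~(\ref{first ineq}) uniform on $(0,2-\varepsilon]$, and the explicit test sets $\{2^k\}$ (implicit in the paper's remark immediately after Theorem~\ref{gamma_k(z) main thm}) and $\mathbb{P}_k$ for Parts B and~C.
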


\begin{proof}
A) follows from (\ref{first ineq}), since it is uniform on $[0, 2 - \varepsilon]$.
B) and C) follow from Theorem \ref{gamma_k(z) main thm} since $\lim_{k \to \infty} \gamma_{k}(z) \le U(z)$ and $G(z) = \lim_{k \to \infty} \gamma_{k}(z)$ has a pole at $2$.
\end{proof}

\begin{defin}
    Let $z > 0$. We say that a prime $p$ is Erdős $z$--strong if for any primitive $A \subset L_{p}$ we have $f_{z}(p) \ge f_z(A)$.
\end{defin}

This notion was introduced in \cite{Pomerance} for $z = 1$  (such primes are called Erdős strong). In \cite{Lichtman} it was proved that all odd primes are Erdős $1$--strong. It remains an open question if $2$ is Erdős $1$--strong. Obviously if all primes are Erdős $z$--strong, then $U(z) = \gamma_1(z)$, which we know is not the case for $z$ that is close to $2$ since then $U(z) \ge G(z) > \gamma_1(z)$.

We follow \cite{Lichtman} and prove that $U(z) = \gamma_1(z)$ in some neighborhood of $z = 1$. 

\begin{theorem} \label{U(z) main}
    A) All odd primes are Erdős $z$--strong for $z \in [0.44, 2]$.

    B) $U(z) = \gamma_1(z)$ for $z$ in some neighborhood of $1$. 
    
    C) For any $\varepsilon > 0$ there exists $N$ such that all primes greater than $N$ are Erdős $z$--strong for any $z \in [\varepsilon, 2]$.
\end{theorem}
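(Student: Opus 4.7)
The plan is to extend Lichtman's strategy from \cite{Lichtman} (the $z=1$ case) to arbitrary $z \in [0.44,2]$, substituting Theorem~\ref{main} for \eqref{Erd1} throughout. For Part~A, fix an odd prime $p$ and a primitive $A \subset L_p$. The case $p \in A$ is trivial, so every $a \in A$ factors as $a = pm$ with $m > 1$ and $p(m) \geq p$, and $A/p := \{m : pm \in A\}$ is itself primitive. Apply Theorem~\ref{main} to $A/p$ with $f(q) = z/q$ under the linear order on primes that places $\{q : q \geq p\}$ first in the natural order and $\{q : q < p\}$ afterwards; for every $m \in A/p$ we then have $P'(m) = P(m)$, and obtain
\[
\sum_{m \in A/p} \frac{z^{\Omega(m)}}{m} \prod_{p \leq q < P(m)} \left(1 - \frac{z}{q}\right) \leq 1.
\]
Combining this with an effective lower bound $\prod_{p \leq q < x}(1 - z/q) \geq K_{z,p}\bigl(\log p/\log x\bigr)^{z}$, uniform in $x$, and the trivial estimate $\log(pm) \geq \log P(m)$, yields $f_z(A) \leq f_z(p)/K_{z,p}$. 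The range $z \in [0.44, 2]$ is precisely where $K_{z,p} \geq 1$ holds for every odd prime $p$; this is verified by combining an effective Mertens estimate (valid for $p$ sufficiently large) with an explicit finite check at the smallest odd primes.

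Part~C is the same computation made uniform in $z$: effective Mertens error terms show $K_{z,p} \to 1$ uniformly for $z \in [\varepsilon, 2]$ as $p \to \infty$, so there exists $N = N(\varepsilon)$ past which $K_{z,p} \geq 1$ for every $z \in [\varepsilon, 2]$. For Part~B, decompose any primitive $A$ as $A = \bigsqcup_p A_p$ with $A_p := \{a \in A : p(a) = p\} \subset L_p$ primitive. Part~A handles the odd-prime contributions for $z$ in a neighborhood of $1$. Since $p = 2$ is not known to be Erdős $1$-strong, one cannot hope for $f_z(A_2) \leq f_z(2)$ individually; instead, following Lichtman, the bound $\sum_p f_z(A_p) \leq \gamma_1(z)$ must be obtained globally, compensating the possible deficit at $p = 2$ by the surplus coming from the odd-prime bounds. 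Lichtman's inequalities at $z = 1$ are strict with a quantifiable positive margin, and this margin persists in a small neighborhood of $1$ by continuity in $z$.

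The principal obstacle is Part~B: Lichtman's proof that $U(1) = \gamma_1(1)$ is global rather than per-prime, and the $L_2$-piece demands a separate, delicate estimate. Importing this estimate with explicit $z$-dependence, and verifying that its continuity in $z$ preserves the needed margin across the odd-prime surplus, is where most of the work lies. A secondary difficulty in Part~A is pinning down the exact numerical threshold $0.44$, which requires sharp effective Mertens bounds coupled with a direct verification for the first few odd primes.
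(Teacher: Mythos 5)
There is a genuine gap, and it sits exactly where the whole difficulty of the theorem lies. Your Part~A is the classical Erd\H{o}s-type comparison carried out element by element: bound $(\log a)^{-z}$ by $(\log P(m))^{-z}$ for $a=pm$, convert to $\prod_{p\le q<P(m)}(1-z/q)$ by a Mertens estimate, and apply Theorem~\ref{main}. With your stated estimate $\log(pm)\ge\log P(m)$ this yields only $f_z(A)\le f_z(p)/K_{z,p}$ with
\[
K_{z,p}=\inf_{x\ge p \text{ prime}}\left(\frac{\log x}{\log p}\right)^{z}\prod_{p\le q<x}\left(1-\frac zq\right)=\inf_{x\ge p}\frac{\mu_x(z)}{\mu_p(z)}=\frac{m_p(z)}{\mu_p(z)},
\]
in the notation of Section~4, and this is \emph{not} $\ge 1$: already for $p=3$, $z=1$, taking $x=7$ gives $\frac{\log 7}{\log 3}\cdot\frac23\cdot\frac45\approx 0.945<1$, and near $z=2$ the loss is severe, e.g.\ $\bigl(\frac{\log 5}{\log 3}\bigr)^{2}\bigl(1-\frac23\bigr)\approx 0.72$ for $p=3$; so the claim that $[0.44,2]$ is ``precisely where $K_{z,p}\ge1$'' is false as stated. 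Even if you sharpen to $\log(pm)\ge\log p+\log P(m)$, the requirement becomes $\mu_p(z)\le\inf_{x\ge p}\mu_x(z)\bigl(1+\frac{\log p}{\log x}\bigr)^{z}$; since $x$ ranges over \emph{all} primes $\ge p$ and $\mu_x(z)\to1$, this forces essentially $\mu_p(z)\le1$, which no effective Mertens bound can deliver for every $p$ (for $z=1$ it fails for infinitely many $p$ by the Diamond--Pintz oscillation theorem for Mertens' product). For the same reason Part~C does not follow from ``$K_{z,p}\to1$ uniformly'': the limit is approached from below whenever $\mu_p(z)>1$, so convergence to $1$ never gives $K_{z,p}\ge1$.

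What the paper (following Lichtman) does, and what your outline omits, is the mechanism that produces a factor \emph{strictly below} $1$ to absorb exactly these Mertens fluctuations: partition $A_n$ according to $a<P(a)^{1+\nu}$, gain $(1+\nu)^{-z}$ via Lemma~\ref{L1bound}, and --- the key point --- bound $d_z(L_{A^{(j)}})\le\nu_{j+1}^{z/2}r_q(z)\,d_z(L_n)$ using the disjointness of the sets $L_{ac}$, $c\in C_a^{\nu}$ (Lemma~\ref{Lichtman lemma}, Lemma~\ref{LA_n bound}); Abel summation (Lemma~\ref{c_i, D_i}) then yields Proposition~\ref{U(z) main tool} with the factor $I(z)=\frac z2\int_0^1\nu^{z/2-1}(1+\nu)^{-z}\,d\nu<1$ (e.g.\ $I(1)=\pi/4$). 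It is this slack, not Mertens accuracy, that makes $b_q(z)\le1$ provable uniformly for large $q$ and $z\in[\varepsilon,2]$ (Part~C) and, after the explicit bounds of Lemma~\ref{b_q(z)' bound} and a computer verification, for all odd $q$ and $z\in[0.44,2]$ (Part~A); the threshold $0.44$ reflects $I(z)\to1$ as $z\to0$, not sharp Mertens constants. Your sketch of Part~B (decompose by least prime factor, treat $2\notin A$ via Lichtman's Theorem~4.4 and continuity in $z$) does match the paper, but it rests on Part~A, so the gap propagates.
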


The sum of $1/(n\log n)$ over a primitive set is convergent. For any $\varepsilon > 0$ the sum of $1 / (n (\log n)^{\varepsilon})$ over $\mathbb{P}_k$ converges for any $k \ge 1$ (but in view of Theorem \ref{gamma_k(z) main thm} it is not uniformly bounded in $k$).  We can ask whether we can replace $1/(n\log n)$ with some function which decays slower, but with the property, that the sum of this function over an arbitrary primitive set is convergent. The answer is negative:

\begin{theorem} \label{tightness}
For each function $\psi(n)$ such that $\lim_{\Omega(n) \to \infty} \psi(n) = +\infty$ there exists a primitive set $A$, which satisfies two properties:

I. For any $0 < z < 2$
\[
\sum_{a \in A} \frac{\psi(a) z^{\Omega(a)}}{a (\log a)^z} = \infty,
\]

II. $f_z(A) = \infty$ for any $z \ge 2$. 

\end{theorem}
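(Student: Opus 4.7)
The plan is to realise $A$ as a disjoint union $A = \bigsqcup_{j \ge 1} B_j$, where each $B_j$ is a subset of $\mathbb{P}_{k_j}$ for a chosen sequence $k_j \to \infty$, and where primitivity is enforced by restricting the prime factorisations of elements of $B_j$ to lie in a prime set $I_j$, the $I_j$'s being pairwise disjoint. Primitivity is then automatic: within $B_j$ the common value $\Omega = k_j$ excludes divisibility, and across distinct blocks the disjointness of the $I_j$'s does. Writing $\psi_k := \inf_{\Omega(n)=k}\psi(n)\to\infty$ and $T_j(z) := f_z(B_j)$, properties I and II reduce to
\[
\sum_j \psi_{k_j} T_j(z) = \infty \ (z \in (0,2)), \qquad \sum_j T_j(z) = \infty \ (z \ge 2).
\]

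Taking $B_j = \{a \in \mathbb{P}_{k_j} : p \mid a \Rightarrow p \in I_j\}$, the estimate $\log a \le k_j\log\max I_j$ together with $\prod_{p \in I_j}(1-x/p)^{-1}\ge \exp(xS_j)$, where $S_j := \sum_{p\in I_j}1/p$, gives
\[
T_j(z) \ge \frac{(zS_j)^{k_j}}{k_j!\,(k_j\log\max I_j)^{z}}.
\]
Mertens' theorem relates $S_j$ to the multiplicative width of $I_j$, so $S_j$ can be prescribed by choosing the endpoints of $I_j$. For property II, I would arrange a dedicated layer with $2 \in I_j$ and $k_j \to \infty$; by the same mechanism as in Theorem~\ref{gamma_k(z) main thm}, $T_j(2)$ then grows like $\gamma_{k_j}(2)$, and $\sum_j T_j(z) = \infty$ follows at $z = 2$. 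For $z > 2$ the same block captures elements $2^{k_j}$-like whose $f_z$ values grow with $k_j$.

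For property I, the parameters are chosen by a layered procedure: enumerate closed subintervals $[a_n, b_n]$ of $(0,2)$ whose union is $(0,2)$, allocate an infinite index set $L_n$ to each layer $n$, and within layer $n$ pick $(k_j, I_j)$ so that $T_j(z) \ge \varepsilon_n(k_j) > 0$ uniformly for $z \in [a_n, b_n]$. The freedom to push $k_j \to \infty$ along each $L_n$ makes $\psi_{k_j}$ eventually larger than any prescribed polynomial in $k_j$, so that $\sum_{j\in L_n}\psi_{k_j}\varepsilon_n(k_j) = \infty$. For any $z \in (0,2)$, picking $n$ with $z \in [a_n, b_n]$ shows that $\sum_j \psi_{k_j}T_j(z)$ already diverges on layer $n$. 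Disjointness of the prime intervals across all layers is ensured by sending $\min I_j$ to infinity along any diagonal enumeration of the pairs $(n, j)$.

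The principal obstacle is the tension in the lower bound for $T_j(z)$: widening $I_j$ enlarges $S_j$ (helpful against $k_j!$ and the factor $z^{k_j}$ for small $z$) but also enlarges $\max I_j$, which amplifies the denominator $(k_j\log\max I_j)^{z}$. A single choice of $(k_j, I_j)$ cannot produce a lower bound on $T_j(z)$ that is uniform across all of $(0,2)$; the layered construction circumvents this by tailoring the width of $I_j$ to the target subinterval $[a_n, b_n]$, so that on each layer the estimate is balanced for exactly the $z$-range that layer is responsible for, while the union over all layers sweeps out the full open interval $(0, 2)$.
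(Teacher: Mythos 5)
Your overall strategy — decomposing $A$ into blocks $B_j$ lying in distinct $\mathbb{P}_{k_j}$, exploiting $\psi_{k}:=\inf_{\Omega(n)=k}\psi(n)\to\infty$, and covering $(0,2)$ by subintervals — is aligned in spirit with the paper, which builds $A=\bigcup_i A_i$ with $A_i\subset\mathbb{P}_{C_i+1}$ and verifies three conditions block by block. The primitivity mechanism you propose (pairwise disjoint prime supports $I_j$) is also a valid alternative to the paper's choice $A_i=\{p_i b : (b,P_i)=1,\ \Omega(b)=C_i\}$. However, the quantitative heart of your argument has a genuine gap that the layering device does not repair.

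The difficulty is that restricting $B_j$ to a \emph{bounded} prime set $I_j$ forces $T_j(z)=f_z(B_j)$ to decay geometrically in $k_j$. Your lower bound $T_j(z)\ge (zS_j)^{k_j}/\bigl(k_j!\,(k_j\log\max I_j)^z\bigr)$ contains a denominator $(\log\max I_j)^z$ which Mertens' theorem ties rigidly to $S_j$: since $S_j\le\log\log\max I_j+O(1)$, making $S_j\asymp ck_j$ (needed so that $(zS_j)^{k_j}/k_j!\sim(zce)^{k_j}$ does not collapse) forces $\log\max I_j\gtrsim e^{ck_j}$, so that $(\log\max I_j)^z\gtrsim e^{czk_j}$ and
\[
T_j(z)\ \gtrsim\ \bigl(zce^{1-cz}\bigr)^{k_j}\cdot k_j^{-O(1)}.
\]
The factor $zce^{1-cz}$ attains its maximum value $1$ only at $c=1/z$; for $z$ ranging over any subinterval of positive length it is bounded away from $1$ at all but one point, so $T_j(z)$ decays exponentially in $k_j$ on that subinterval. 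You then need $\psi_{k_j}$ to dominate this exponential decay, but the hypothesis only gives $\psi_{k_j}\to\infty$, possibly as slowly as $\log\log k_j$; the claim that ``$\psi_{k_j}$ [is] eventually larger than any prescribed polynomial in $k_j$'' is simply false. The same bounded-support issue breaks property II: $\gamma_{k}(2)\sim\frac{d_2}{2}k$ is driven by integers $2^j m$ with $m$ having an unbounded largest prime factor, and restricting $m$'s primes to a finite set $I_j$ destroys the linear growth, so $T_j(2)$ does \emph{not} grow like $\gamma_{k_j}(2)$; moreover, only one $I_j$ can contain $2$, so a whole ``dedicated layer'' with $2\in I_j$ is incompatible with your disjointness requirement.

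The paper sidesteps all of this by keeping the prime support unbounded: $A_i$ consists of $p_i b$ with $(b,P_i)=1$, a \emph{finite} coprimality constraint rather than a confinement to a bounded interval. Inclusion--exclusion over the finitely many primes in $P_i$ combined with Theorem~\ref{gamma_k(z) main thm} then gives $f_z(A_i)\to G(z)\frac{z}{p_i}\prod_{j\le i}(1-z/p_j)$ as $C_i\to\infty$, uniformly for $z\in(0,2-2^{-i}]$ — a \emph{positive constant} depending only on $i$ and $z$, not decaying in $C_i$. That positivity is exactly what lets $\psi_{C_i+1}\to\infty$ (however slowly) force the weighted sum past any threshold. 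If you want to salvage your scheme you would need to replace bounded $I_j$ by prime sets that still capture the unbounded tail (e.g.\ the complement of a finite set, as in the paper), which in turn requires the uniform asymptotics of Theorem~\ref{gamma_k(z) main thm} as a black box; the elementary coefficient-of-$w^{k_j}$ bound you invoke is not sharp enough.
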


\subsection{Primitive density}

Now we know, that $f_z(A)$ converges for any $0 < z < 2$ and can diverge for $z \ge 2$.

We want to define $f_0(A)$ somehow.

\begin{defin}
Let $A$ be an arbitrary set of natural numbers, and assume that $1 \notin A$.
\[
\overline{\eta}(A) \vcentcolon = \limsup_{z \to 0} f_z(A), 
\quad \underline{\eta}(A) \vcentcolon = \liminf_{z \to 0} f_z(A).
\]
We call $\overline{\eta}(A)$ and $\underline{\eta}(A)$ the upper and the lower primitive densities of $A$ respectively.

If $\overline{\eta}(A) = \underline{\eta}(A)$, then we say that $A$ has a primitive density $\eta(A) \vcentcolon = \overline{\eta}(A) = \underline{\eta}(A)$.
\end{defin}

It seems natural to define $\eta(\{ 1 \}) = 1$.

Note that this notion is not a usual density in a sense that $\eta(\mathbb{N}) = \infty \ne 1$.

\begin{theorem} \, \label{pr_dens}

    A) For each primitive set $A$, \, $0 \le \underline{\eta}(A) \le \overline{\eta}(A) \le 1$;

    B) $\forall k \ge 0, \, \, \, \eta(\mathbb{P}_k) = 1$;

    C) There exists a primitive set $A$ such that $\underline{\eta}(A) = 0$ and $\overline{\eta}(A) = 1$;

    D) Let $A \subset \mathbb{P}_k$. Let us say that $A$ has Dirichlet density $c$ if 
    \[
    \lim_{s \to 1+} \frac{\sum_{a \in A} a^{-s}}{\sum_{m \in \mathbb{P}_k} m^{-s}} = c
    \]
    (see \cite[Chapter VI, \S 4]{serre2012course} for the definition and discussion of Dirichlet density). Suppose that $A$ has Dirichlet density $c$, then it has primitive density $c$;

    E) If $\overline{\eta}(B) > n - 1, \,\, n \in \mathbb{N}$, then there exists an infinite matrix \, $(b_{j, i})_{j \in \mathbb{N}, 1 \le i \le n}$ such that $b_{j, i} \in B$, all $b_{j, i}$ are distinct and for all $j \in \mathbb{N}$ and $1 \le i \le n - 1$ we have $b_{j, i} | b_{j, i+1}$;

    F) There exists a set $B$ such that $\eta(B) = \infty$, but $B$ does not contain an infinite sequence $b_1, b_2, \ldots$ such that $b_{i} | b_{i+1}$ for all $i$.

\end{theorem}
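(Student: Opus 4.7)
The plan is to address the six parts in a natural dependency order: (A); (D), which implies (B); (E) and (F); and finally (C), the main obstacle. For (A), non-negativity is immediate. For $\overline{\eta}(A)\le 1$, I apply Theorem~\ref{main} with $f(p)=z/p$ and the usual order on primes, obtaining $\sum_{a\in A}\frac{z^{\Omega(a)}}{a}\prod_{p<P(a)}(1-z/p)\le 1$. The Mertens-type asymptotic $\prod_{p\le x}(1-z/p)^{-1}\sim G(z)\Gamma(1{+}z)e^{\gamma z}(\log x)^z$ (equivalent to the convergence of the product defining $G(z)$) has prefactor tending to $1$ as $z\to 0^+$, so given $\varepsilon>0$ I choose $X=X_\varepsilon$ such that $(\log P(a))^{-z}\le(1+\varepsilon)\prod_{p<P(a)}(1-z/p)$ whenever $P(a)\ge X$; summing gives $\le 1+\varepsilon$. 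The complement $\{a\in A:P(a)<X\}$ is finite by Dickson's lemma (antichains in $\mathbb{N}^{\pi(X)}$ are finite), and each of its terms tends to $0$ as $z\to 0^+$ since $\Omega(a)\ge 1$. Letting $\varepsilon\to 0$ proves $\overline{\eta}(A)\le 1$. Part (B) is then (D) applied to $A=\mathbb{P}_k$, which has Dirichlet density $1$ in $\mathbb{P}_k$.

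For (D), use $(\log a)^{-z}=\frac{1}{\Gamma(z)}\int_0^\infty t^{z-1}a^{-t}\,dt$ to write $f_z(A)=\frac{z^k}{\Gamma(z)}\int_0^\infty t^{z-1}D_A(1{+}t)\,dt$, where $D_A(s):=\sum_{a\in A}a^{-s}$. The factorization $\prod_p(1-w/p^s)^{-1}=\zeta(s)^w H(w,s)$ with $H$ analytic near $(0,1)$ and $H(0,1)=1$, together with $\sum_{\Omega(n)=k}n^{-s}=[w^k]\prod_p(1-w/p^s)^{-1}$, yields the singularity $D_{\mathbb{P}_k}(1{+}t)\sim(-\log t)^k/k!$ as $t\to 0^+$, and the Dirichlet density hypothesis gives $D_A(1{+}t)\sim c\,D_{\mathbb{P}_k}(1{+}t)$. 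Splitting the integral at $t=\delta$, the tail $[\delta,\infty)$ contributes $O(z^{k+1})\to 0$; on $(0,\delta)$ the substitutions $s=-\log t$, $u=sz$ give $\int_0^\delta t^{z-1}(-\log t)^k\,dt\sim\Gamma(k{+}1)/z^{k+1}$, and combined with $z^k/\Gamma(z)\sim z^{k+1}$ this produces $f_z(A)\to c$.

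For (E), I argue the contrapositive: if $B$ has no infinite family of pairwise disjoint $n$-chains, greedy extraction of $n$-chains from $B$ minus previously chosen elements must terminate, leaving a finite $F\subset B$ with $B\setminus F$ containing no $n$-chain; Mirsky's theorem decomposes $B\setminus F$ into at most $n-1$ antichains, each with $\overline{\eta}\le 1$ by (A), and since $F$ is finite, $f_z(F)\to 0$, so $\overline{\eta}(B)\le n-1$. For (F), set $B=\bigcup_{k\ge 1}B_k$ with $B_k:=\{n:\Omega(n)=k,\,p(n)\ge f(k)\}$ for any $f:\mathbb{N}\to\mathbb{N}$ with $f(k)\to\infty$. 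Any chain $b_1\mid b_2\mid\cdots$ in $B$ has $\Omega(b_i)$ strictly increasing and $p(b_i)$ nonincreasing, so $p(b_i)\ge f(\Omega(b_i))\to\infty$ is impossible for infinite chains. An Euler-product computation as in (D) (the extra factor $\prod_{p<f(k)}(1-w/p^s)$ affects only sub-leading terms of the $[w^k]$-extraction) shows $B_k$ has Dirichlet density $1$ in $\mathbb{P}_k$, hence $\eta(B_k)=1$ by (D); Fatou's lemma on $f_z(B)=\sum_k f_z(B_k)$ then gives $\underline{\eta}(B)\ge\sum_k 1=\infty$.

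For (C), take $A=\{p\text{ prime}:\log p\in V\}\subset\mathbb{P}$ (automatically primitive), with $V=[\log 2,b_1]\cup[a_2,b_2]\cup[a_3,b_3]\cup\cdots$ for $a_n=T_n$, $b_n=T_n^{M_n}$ satisfying $M_n\ge 3$ and $T_{n+1}\gg T_n^{M_n}$ (e.g., $T_{n+1}=T_n^{M_n^2}$). By PNT and Abel summation, $f_z(A)\approx\sum_n(a_n^{-z}-b_n^{-z})$. Along $z_n'=1/T_n^{M_n-1}$ (inside the $n$-th interval: $z_n' b_n=T_n\to\infty$ and $z_n' a_n=T_n^{-(M_n-2)}\to 0$), the $n$-th term approaches $1$ while all others are $o(1)$, so $f_{z_n'}(A)\to 1$. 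Along $z_n''=1/\sqrt{b_n T_{n+1}}$ (in the $n$-th gap: $z_n''b_n=\sqrt{T_n^{M_n}/T_{n+1}}\to 0$ while $z_n''T_{n+1}=\sqrt{T_{n+1}/T_n^{M_n}}\to\infty$), the $n$-th interval contributes $\approx z_n''b_n\to 0$ and all other contributions are $o(1)$, so $f_{z_n''}(A)\to 0$; combined with (A), $\overline{\eta}(A)=1$ and $\underline{\eta}(A)=0$. The main obstacle is to control the PNT error in the Abel-summation step $f_z(A)\approx\sum_n(a_n^{-z}-b_n^{-z})$ uniformly along both sequences; since the scales $T_n^{M_n}$ grow doubly exponentially, this requires the quantitative PNT $\pi(x)=\mathrm{li}(x)+O(x\exp(-c\sqrt{\log x}))$ and careful bookkeeping to ensure the approximation error stays negligible relative to the main terms $a_n^{-z}-b_n^{-z}$.
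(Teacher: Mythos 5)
Parts A, B, D, E of your proposal are sound and close in spirit to the paper: (A) is the paper's argument (Theorem \ref{main} with $f(p)=z/p$, uniform Mertens, and finiteness of antichains with bounded $P(a)$), and (E) is the paper's height-decomposition argument repackaged via Mirsky plus greedy extraction. Your (D) is done directly through the Landau-type singularity $D_{\mathbb{P}_k}(1+t)\sim(-\log t)^k/k!$, whereas the paper sandwiches $f_z(A)$ between $(c\pm\delta)\gamma_k(z)+o(1)$ and quotes $\gamma_k(z)\to 1$ (its part B); your route is self-contained and yields B as a corollary. Your (F) is genuinely different and arguably cleaner: the paper builds $B_i=\{p_ib:(b,P_i)=1,\Omega(b)\le C_i\}$ and needs the quantitative growth $f_z(B_i)\gtrsim C_i$ from Theorem \ref{gamma_k(z) main thm}, while you take $B_k=\{n:\Omega(n)=k,\ p(n)\ge f(k)\}$, each of Dirichlet density $1$ in $\mathbb{P}_k$, and conclude $\underline{\eta}(B)=\infty$ from (D) and Fatou; the no-infinite-chain argument via monotonicity of $p(\cdot)$ and $\Omega(\cdot)$ is correct.

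The genuine gap is in (C). Since your $a_n=T_n$, $b_n=T_n^{M_n}$ are values of $\log p$, the $n$-th block contributes $a_n^{-z}-b_n^{-z}=e^{-z\log T_n}-e^{-zM_n\log T_n}$, so the relevant scale for $z$ is $1/\log a_n$ and $1/\log b_n$, not $1/a_n$ and $1/b_n$. You verify $z_n'a_n\to0$, $z_n'b_n\to\infty$, but with $z_n'=T_n^{-(M_n-1)}$ one has $z_n'\log b_n=M_n\log T_n/T_n^{M_n-1}\to0$, so \emph{both} $a_n^{-z_n'}$ and $b_n^{-z_n'}$ tend to $1$ and the $n$-th term tends to $0$, not $1$ (at that $z$ the transition scale $1/z_n'$ sits far beyond block $n$, where $A$ has only a bounded-away-from-$1$ density); likewise $z_n''\log T_{n+1}\to0$, so $z_n''$ does not lie "in the gap" in the relevant sense. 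Moreover, even with correctly placed $z$, a single block contributes at most $\max_{s>0}(e^{-s}-e^{-M_ns})$, which is bounded away from $1$ if $M_n$ is bounded; requiring only $M_n\ge3$ cannot give $\overline{\eta}(A)=1$. The construction is repairable — take $M_n\to\infty$ and, say, $z_n'=(M_n^{1/2}\log T_n)^{-1}$, $z_n''=(M_n^{3/2}\log T_n)^{-1}$ with $\log T_{n+1}=M_n^{2}\log T_n$, then cap the limsup by (A) — but as written the verification fails. Note also that the "main obstacle" you flag (quantitative PNT error) is not the real issue: Mertens' theorem with error $O(1/\log x)$, uniform for small $z$, suffices; the difficulty is precisely this scale bookkeeping, which the paper avoids by a soft inductive construction, using $\eta(\mathbb{P})=1$ to choose $z_n$ first (so that the tail of $f_{z_n}(\mathbb{P})$ beyond $a_{n-1}$ exceeds $1-2^{-(n+1)}$) and only then choosing $a_n$ to capture that mass, taking the primes in alternate blocks.
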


Theorem \ref{pr_dens}F is a negative result in some sense. Since primitive sets have logarithmic density zero it is easy to see that if the upper logarithmic density of $B$ is positive, then for arbitrary large $k$, $B$ contains a subsequence $b_1, b_2, \ldots, b_k$ such that $b_{i} | b_{i + 1}$. The Davenport-Erdős theorem \cite{Davenport-Erdos} states that actually $B$ contains an infinite subsequence $b_1, b_2, \ldots$ such that $b_{i} | b_{i+1}$. 
Theorem \ref{pr_dens}F shows that the upper primitive density does not possess this property.

\subsection{$z$-logarithmic density of primitive sets}

Other questions arise when we study primitive subsets of $\{1, 2, \ldots, N \}$. 

For $A \subset \{1, 2, \ldots, N \}$ we define 
\[
\delta(f, A, N) =  \frac{\sum_{a \in A} f(a)}{\sum_{n \le N} f(n)}.
\]

Let us define $z$-logarithmic density of $A \subset \{1 , 2, \ldots, N \}$ in $\{1, 2, \ldots, N\}$ to be $\delta(h_z, A, N)$, where $h_z(n) \vcentcolon = z^{\Omega(n)} / n$. By $D_z(N)$ we denote the maximal $z$-logarithmic density other all primitive subsets of $\{1, \ldots, N \}$:
\[
D_z(N) = \max_{A \,\, \text{primitive}} \delta(h_z, A, N).
\]

If $z < 2$, then Theorem \ref{main} implies that $D_z(N) = o(1)$.

For the classical case $z = 1$ it was proved by Behrend in \cite{Behrend}, that $D_1(N) \ll (\log \log N)^{-1/2}$. Pillai showed in \cite{Pillai} that actually $D_1(N) \gg (\log \log N)^{-1/2}$. 

In \cite{Szemeredi} it was proved by Erdős, Sarkozi and  Szemeredi, that $D_1(N) \sim (2 \pi \log \log N)^{-1/2}$.

\begin{theorem} \label{D_z}
    A) For $0 < z < 2$ we have as $N$ tends to infinity
    \begin{flalign*}
    && D_z(N) &\sim (2 \pi z \log \log N)^{-1/2};&\\
    \quad\,\,\,\text{B)} && D_2(N) &\asymp \left( \log N \right)^{-1};&
   \end{flalign*}

    C) If $z > 2$, then there exists $C > 0$, $N_0$ such that for all $N > N_0$ we have $D_z(N) \ge C$. One can take $N_0 = 4$ and $C = (1 - 2/z)/3$.

\end{theorem}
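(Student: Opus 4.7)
For Part A, Selberg--Delange applied to the Dirichlet series $\sum_n z^{\Omega(n)}/n^s = \prod_p(1-z/p^s)^{-1}$, whose singularity at $s=1$ is $(s-1)^{-z}$ for $0<z<2$, gives $\sum_{n\le N} z^{\Omega(n)}/n \sim G(z)(\log N)^z$. The lower bound on $D_z(N)$ is realized by $A = \mathbb{P}_k \cap [1,N]$ with $k=\lfloor z\log\log N\rfloor$: by Lemma \ref{Selb} and partial summation, $\sum_{a\in A}1/a \sim (\log\log N)^k G(k/\log\log N)/k!$, and multiplying by $z^k$ and applying Stirling at the peak $k=z\log\log N$ produces the $(2\pi z\log\log N)^{-1/2}$ factor, with the $G(z)$ factors in numerator and denominator cancelling. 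For the matching upper bound I would adapt the argument of Erd\H os-S\'ark\"ozy-Szemer\'edi from \cite{Szemeredi} to the $z$-weighted setting: decompose $A = \bigsqcup_k(A\cap\mathbb{P}_k)$, bound each level by the total $\mathbb{P}_k \cap [1,N]$ sum via Sathe--Selberg, and use that under the weights $z^{\Omega(n)}/n$ the statistic $\Omega$ concentrates at $z\log\log N$ with variance $\sim z\log\log N$, so levels away from the peak contribute only a lower-order amount.

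For Part B, the Euler factor at $p=2$ contributes an additional simple pole $(1-2^{1-s})^{-1}$ on top of the $(s-1)^{-2}$ from the remaining primes, so $\sum_{n\le N} 2^{\Omega(n)}/n \asymp (\log N)^3$. The lower bound uses the explicit primitive set $A = \{2^{j(m)}m : m \text{ odd squarefree},\, m\cdot 2^{\Omega(m)} \le N\}$ with $j(m)=\lfloor\log_2(N/m)\rfloor - \Omega(m)\ge 0$: primitivity follows since for distinct odd squarefree $m, m'$ with $m | m'$ one has $m'/m \ge 3$ and hence $j(m)-j(m')\ge \log_2 3 - 1 + (\Omega(m')-\Omega(m))>0$, so neither of $2^{j(m)}m,\, 2^{j(m')}m'$ divides the other; the sum telescopes to $\sum_r (2\log\log N)^r/r!\asymp (\log N)^2$. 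For the upper bound, Theorem \ref{main} with $f(p)=2/p$ and the ordering placing $2$ last yields, for odd primitive $B$, the inequality $\sum_{b\in B}(2^{\Omega(b)}/b)\prod_{2<p<P(b)}(1-2/p)\le 1$; since the product is $\asymp (\log P(b))^{-2}$ by Mertens, one obtains $\sum_{b\in B,\,b\le N} 2^{\Omega(b)}/b \ll (\log N)^2$. The general case reduces to this via $a=2^{j(a)} m(a)$ together with the chain condition that the function $j$ is strictly decreasing along divisibility in the set of odd parts of $A$. This last step is the main technical obstacle: a naive level-wise application of the odd bound summed over $v_2$-levels gives only $(\log N)^3$, and extracting the extra $\log N$ savings to reach $(\log N)^2$ requires genuinely exploiting the chain condition rather than treating each $v_2$-level as an independent odd primitive set.

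For Part C, take $A=\{2^k\}$ with $k=\lfloor\log_2 N\rfloor\ge 2$ (which holds for $N\ge 4$), contributing $(z/2)^k$ to the numerator. Split $n=2^j m$ with $m$ odd and use the recursion $D(N) = S_{\mathrm{odd}}(N) + (z/2)D(N/2)$, where $D(N) = \sum_{n\le N}z^{\Omega(n)}/n$ and $S_{\mathrm{odd}}(M) = \sum_{m\le M,\,m\text{ odd}}z^{\Omega(m)}/m$, iterating to $D(N) = \sum_{j=0}^k (z/2)^j S_{\mathrm{odd}}(N/2^j)$. Since for $z>2$ the factor $(z/2)^j$ dominates the growth of $S_{\mathrm{odd}}$ (which grows polynomially in $M$ with exponent at most $\log_3 z - 1 < \log_2(z/2)$), a direct estimation bounding the geometric sum and verifying the base case yields $D(N)\le 3z(z/2)^k/(z-2)$ for $N\ge 4$, and hence $D_z(N)\ge (z-2)/(3z)=(1-2/z)/3$.
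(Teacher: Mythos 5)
Your Part A outline matches the paper's route (lower bound via $\mathbb{P}_k\cap[1,N]$ with $k=[z\log\log N]$ plus Lemma \ref{z log log}, upper bound by adapting Erd\H{o}s--S\'ark\"ozy--Szemer\'edi with a replacement argument that migrates $\Omega(a)$ to $k$); you are vague on how the migration preserves the weighted sum, but the idea is the right one. Your Part C argument is a direct rederivation of S\'ark\"ozy's bound where the paper simply cites \cite[Theorem~4]{Sarkozy}; the constant $(1-2/z)/3$ comes out the same because you are reproducing essentially the same construction $A=\{2^k\}$ with $k=\lfloor\log_2 N\rfloor$. Your Part B lower bound uses a different explicit primitive set $\{2^{j(m)}m\}$ than the paper's $\mathbb{P}_{[d\log\log N]}\cap[1,N]$ (which the paper handles with Lemma \ref{1/n asymp} and the Poisson-tail Lemma \ref{poisson tails}); your construction looks workable.

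The genuine gap is exactly where you flag it: the Part B upper bound. You apply Theorem \ref{main} with the ``$2$ placed last'' ordering only to \emph{odd} primitive sets and then try to lift the bound to general $A$ by splitting on $v_2(a)$, which indeed loses a factor $\log N$. But the ordering trick already handles even elements with no loss, and you do not need to split by $2$-adic level at all. With the order $p_1\prec p_2\prec\cdots\prec p_r\prec 2\prec p_{r+1}\prec\cdots$ ($p_i$ the odd primes, $p_r\le N<p_{r+1}$), any primitive set $A$ is automatically $L$-primitive, so Theorem \ref{main} applies directly to all of $A$. For $a\in A$ with $P(a)\le N$: if $a$ is odd, $P'(a)=P(a)$ and the product runs over odd primes $<P(a)$; if $a$ is even, $P'(a)=2$ and the product runs over \emph{all} odd primes $\le N$. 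In both cases the product is $\ge\prod_{2<p\le N}(1-2/p)$, so dropping the (nonnegative) terms with $P(a)>N$ from Theorem \ref{main} gives
\[
\prod_{2<p\le N}\left(1-\frac{2}{p}\right)\sum_{\substack{a\in A\\P(a)\le N}}\frac{2^{\Omega(a)}}{a}\le 1,
\]
which is exactly Proposition \ref{improvement} and yields $\sum_{a\in A,\,a\le N}2^{\Omega(a)}/a\ll(\log N)^2$ for \emph{every} primitive $A$, not only odd ones. Combined with $\sum_{n\le N}2^{\Omega(n)}/n\asymp(\log N)^3$, this finishes $D_2(N)\ll(\log N)^{-1}$. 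The obstacle you identified dissolves once one notices that Theorem \ref{main} under this ordering treats even $a$ with the \emph{largest} possible product (i.e.\ the smallest lower bound), so even elements are not wasted at all; there is no need for a ``chain condition on $j$''.
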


Upper bound in Theorem \ref{D_z}B follows from 

\begin{prop} \label{improvement}
Let $A$ be a primitive set. Then
    \[
    \sum_{
    \substack{a \in A \\ P(a) \le N}} \frac{2^{\Omega(a)}}{a} \le \prod_{2 < p \le N} (1 - 2/p)^{-1} \ll (\log N)^2.
    \]
\end{prop}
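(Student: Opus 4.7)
The plan is to reduce the bound to a calculation involving only the odd part of $a$. The key observation is that, writing $a = 2^{v_2(a)} a^{\circ}$ with $a^{\circ}$ the odd part of $a$, the identity $\Omega(a) = v_2(a) + \Omega(a^{\circ})$ yields
\[
\frac{2^{\Omega(a)}}{a} = \frac{2^{\Omega(a^{\circ})}}{a^{\circ}},
\]
so every summand on the left-hand side depends only on the odd part of $a$.

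The next step is to use primitivity of $A$ to show that the map $a \mapsto a^{\circ}$ is injective on $A$. Indeed, if distinct $a_1, a_2 \in A$ had the same odd part $c$, then $a_i = 2^{v_i} c$ with $v_1 \ne v_2$, and the one with the smaller exponent would divide the other, contradicting primitivity. Since also $P(a^{\circ}) \le P(a) \le N$, the set $\{a^{\circ} : a \in A,\, P(a) \le N\}$ consists of distinct odd positive integers with all prime factors $\le N$, and so
\[
\sum_{\substack{a \in A \\ P(a) \le N}} \frac{2^{\Omega(a)}}{a}
\;\le\;
\sum_{\substack{n \ge 1,\ n \text{ odd} \\ P(n) \le N}} \frac{2^{\Omega(n)}}{n}
\;=\;
\prod_{2 < p \le N} \left(1 - \frac{2}{p}\right)^{-1},
\]
the final equality being the standard Euler product obtained from $(1-2/p)^{-1} = \sum_{k \ge 0}(2/p)^{k}$.

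For the asymptotic bound $\prod_{2 < p \le N}(1-2/p)^{-1} \ll (\log N)^{2}$ I would factor
\[
\left(1 - \frac{2}{p}\right)^{-1} = \left(1 - \frac{1}{p}\right)^{-2}\!\left(1 + \frac{1}{p(p-2)}\right),
\]
observe that $\prod_{2 < p}\bigl(1 + 1/(p(p-2))\bigr)$ converges absolutely since the logarithms of the factors are summable like $\sum 1/p^{2}$, and then apply Mertens' theorem $\prod_{p \le N}(1-1/p)^{-1} \sim e^{\gamma} \log N$ to the squared product.

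There is essentially no obstacle here; all the content is in the observation that $2^{\Omega(a)}/a$ factors through the odd part, combined with the fact that primitivity forces the odd parts to be distinct. A direct appeal to Theorem \ref{main} with $f(p) = 2/p$ would be awkward, because $f(2) = 1$ makes the Erdős-type product $\prod_{p \prec P'(a)}(1 - f(p))$ vanish for too many $a$ under any ordering of the primes; the odd-part trick sidesteps this degeneracy cleanly.
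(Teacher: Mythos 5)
Your proof is correct, and it takes a genuinely different and more elementary route than the paper. The paper deduces the inequality from Theorem \ref{main}: it orders the primes as $p_1 \prec \dots \prec p_r \prec 2 \prec p_{r+1} \prec \dots$ (odd primes up to $N$ first, then $2$), applies the theorem with $f(n) = 2^{\Omega(n)}/n$, and observes that for every $a$ with $P(a) \le N$ the product $\prod_{p \prec P'(a)}(1-f(p))$ contains no factor at $p=2$ and is at least $\prod_{2<p\le N}(1-2/p)$. So your closing remark that a direct appeal to Theorem \ref{main} "would be awkward under any ordering" is not accurate: the degenerate factor $1-f(2)=0$ only attaches to elements with $P'(a) \succ 2$, i.e.\ with a prime factor exceeding $N$, which are excluded from the sum anyway; this is precisely the point of the paper's choice of order. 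Your argument instead uses only the identity $2^{\Omega(a)}/a = 2^{\Omega(a^{\circ})}/a^{\circ}$ and the fact that primitivity forces the odd parts to be distinct, then compares with the full Euler product over odd $n$ with $P(n) \le N$. What each approach buys: yours is self-contained, avoids the Erdős-type inequality entirely, and in fact proves the bound under the weaker hypothesis that the elements of $A$ have pairwise distinct odd parts; on the other hand it is special to $z=2$ (the cancellation of the power of $2$ is exactly the statement $f(2)=1$), whereas the paper's derivation is a routine instance of its general Theorem \ref{main} machinery and shows how the freedom in choosing the prime ordering can be exploited. Your verification of the tail estimate, via $(1-2/p)^{-1} = (1-1/p)^{-2}\bigl(1 + \tfrac{1}{p(p-2)}\bigr)$ and Mertens, matches the quantitative version the paper proves later (Lemma \ref{improvement_const}).
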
 

Note that Theorem \ref{D_z}B implies that the left hand side in this inequality is $\gg (\log N)^2$ for some primitive set $A$.

Theorem \ref{main} is applied to prove Proposition \ref{improvement}. Proposition \ref{improvement} improves Lemma 2 in \cite{Elem_upperbound}, that states that
\[
\sum_{\substack{a \in \mathbb{P}_k \\ P(a) \le N}} \frac{1}{n} \ll (k+1) 2^{-k}(\log N)^2.
\]

The universal upper bound for $N_k(x) \vcentcolon = |\mathbb{P}_k \cap [1, x]|$ is given in \cite{Elem_upperbound}. It is proved, that $N_k(x) \ll k^4 2^{-k} x \log x$. 

Using methods of complex analysis it was proved that

\begin{prop}[Balazard-Delange-Nicolas] \label{BDN}
For $k \ge 1$ and $x/2^k \to \infty$
\[
N_k(x) \sim (2 - \rho) G(\rho) \frac{x}{2^k} \left(\log \frac{x}{2^k} \right)^{-1} \sum_{0 \le j < k} \frac{\left( 2 \log \log \frac{x}{2^k} \right)^j}{j!},
\]
where
\[
\rho \sim \min\left\{ 2, \frac{k-1}{\log \log \frac{x}{2^k}}\right\}, \quad \rho < 2.
\]
\end{prop}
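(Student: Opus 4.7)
The plan is to apply the Selberg--Delange method to the two-variable Dirichlet series
\[
F(s,z) \vcentcolon= \sum_{n=1}^\infty \frac{z^{\Omega(n)}}{n^s} = \prod_p \Big(1-\frac{z}{p^s}\Big)^{\!-1},
\]
with the prime $2$ singled out. Factor $F(s,z) = (1-z/2^s)^{-1}H(s,z)$ where $H(s,z)=\prod_{p>2}(1-zp^{-s})^{-1}$; the quotient $H(s,z)/\zeta(s)^z$ continues analytically into a half-plane $\Re s>1/2$, and its value at $s=1$ combined with $(1-z/2)^{-1}/\Gamma(1+z)$ reproduces the constant $G(z)$ of Theorem \ref{gamma_k(z) main thm}. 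Since $N_k(x) = [z^k]\sum_{n\le x}z^{\Omega(n)}$, one has the Cauchy integral
\[
N_k(x) = \frac{1}{2\pi i}\oint_{|z|=\rho}\frac{dz}{z^{k+1}}\sum_{n\le x} z^{\Omega(n)},
\]
whose radius $\rho\in(0,2)$ is to be fixed by a saddle-point condition.

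First I would evaluate $\sum_{n\le x}z^{\Omega(n)}$ by Perron's formula, shifting the $s$-contour past the analytic singularity at $s=1$ (coming from $\zeta(s)^z$ inside $H$) and, when $|z|>1$, past the simple pole of $(1-z/2^s)^{-1}$ at $s=\log_2 z$; the standard zero-free region for $\zeta$ controls the remainder. The dominant ingredient that survives is of the form $c(z)\,x(\log x)^{z-1}$, where $c(z)$ contains the factor $(1-z/2)^{-1}$ and hence encodes the $p=2$ singularity. Inserting this into the outer $z$-integral and carrying out the saddle-point analysis, the stationary equation $\log\log x + c'(\rho)/c(\rho) = (k+1)/\rho$, together with the singular behaviour $c'(z)/c(z)\sim (\log 2)/(2-z)$ at $z=2$, rearranges after routine manipulation to $\rho\sim\min\{2,(k-1)/\log\log(x/2^k)\}$. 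The Gaussian expansion around $z=\rho$, combined with the truncation produced by the simple pole of $(1-z/2)^{-1}$ sitting inside the contour when $\rho<2$, resums to the truncated exponential $\sum_{0\le j<k}(2\log\log(x/2^k))^j/j!$, while the residue at $s=\log_2 z$ and the Jacobian of steepest descent assemble into the prefactor $(2-\rho)G(\rho)\,x/(2^k\log(x/2^k))$.

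The main obstacle is uniformity as $\rho\to 2^-$, equivalently when $k$ approaches $\log_2 x$. In this transition regime the pole at $z=2$ inside $c(z)$ approaches the contour of integration, and simple residue calculus must be replaced by a uniform local analysis of $F$ near $(s,z)=(1,2)$. Crucially, the factor $(2-\rho)$ in the stated prefactor is exactly what this uniform local expansion produces, cancelling the pole of $G$ at $z=2$; keeping that cancellation visible throughout the estimates is what allows the asymptotic to remain valid in the full range $x/2^k\to\infty$, rather than only under a faster-growth hypothesis on $x/2^k$.
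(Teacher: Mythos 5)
The paper does not prove this proposition; it cites \cite{Balazard} and \cite{Hwang} (and refers to Tenenbaum for background). Your sketch is in the same spirit as the method carried out in those references — a bivariate generating-function/saddle-point analysis where the prime $2$ is singled out — so the basic architecture is right. But there are two issues worth flagging.

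First, a concrete error: you say the truncated exponential is ``produced by the simple pole of $(1-z/2)^{-1}$ sitting \emph{inside} the contour when $\rho<2$.'' For $\rho<2$ the pole at $z=2$ lies \emph{outside} $|z|=\rho$, and that is exactly what you want: the identity
\[
\frac{1}{2\pi i}\oint_{|z|=\rho}\frac{e^{zu}}{z^{k+1}(1-z/2)}\,dz
= [z^k]\frac{e^{zu}}{1-z/2}
= 2^{-k}\sum_{0\le m\le k}\frac{(2u)^m}{m!}, \qquad \rho<2,
\]
is a plain Taylor-coefficient extraction and is valid precisely because the pole at $z=2$ is excluded. Were the pole inside the contour you would pick up the residue $-2^{-k}e^{2u}$, which destroys the truncation. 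So the mechanism is the opposite of what you wrote.

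Second, and more substantively, your write-up is a plan rather than a proof. The genuinely hard content of Proposition~\ref{BDN} is uniformity: the asymptotic holds for \emph{every} $k\ge1$ in the full range $x/2^k\to\infty$, including the transition region where $k$ is close to $\log_2 x$ and $\rho\to2^-$. You acknowledge this obstacle but do not resolve it; a uniform local expansion of $F(s,z)$ near $(s,z)=(1,2)$, together with error control in the $s$-integral as the pole at $s=\log_2 z$ collides with the singularity at $s=1$, is where the real work lies, and it occupies a large part of \cite{Balazard} and \cite{Hwang}. Filling that in would essentially reproduce those papers, which is why the present paper cites them rather than reproving the result.
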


\begin{proof}
    See \cite{Balazard} or \cite{Hwang}. See also \cite[Chapter II.6 and Notes]{Tenenbaum} for the related discussion. 
\end{proof}

This gives an immediate 

\begin{cor} For $x \ge 2$ and $k \ge 1$ 
\[
N_k(x) \ll \frac{x}{2^k} \left(\log \frac{x}{2^k} \right) + 1.
\]
\end{cor}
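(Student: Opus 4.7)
The plan is to split into two regimes based on whether the ratio $x/2^k$ is large (so that Proposition \ref{BDN} applies with explicit bounds) or bounded (in which case we count directly). Fix a constant $C_0 > 1$ large enough that the asymptotic in Proposition \ref{BDN} gives
\[
N_k(x) \le 2 (2-\rho) G(\rho) \frac{x}{2^k} \left(\log \frac{x}{2^k}\right)^{-1} \sum_{0 \le j < k} \frac{\bigl(2 \log \log \frac{x}{2^k}\bigr)^j}{j!}
\]
uniformly in $k \ge 1$ whenever $x/2^k \ge C_0$.

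In the large regime $x/2^k \ge C_0$, the key observation is the trivial bound
\[
\sum_{0 \le j < k} \frac{\bigl(2 \log \log \frac{x}{2^k}\bigr)^j}{j!} \le \exp\!\left(2 \log \log \frac{x}{2^k}\right) = \left(\log \frac{x}{2^k}\right)^{2},
\]
which collapses one of the $\log(x/2^k)$ factors in the main term and leaves $\frac{x}{2^k}\log\frac{x}{2^k}$. It remains to check that the prefactor $(2-\rho)G(\rho)$ is $O(1)$ uniformly in $\rho \in [0,2]$. Inspecting the Euler product defining $G(z)$, the only prime for which $(1-z/p)^{-1}$ is singular in $[0,2]$ is $p=2$ at $z=2$; this is a simple pole, and the factor $(2-\rho)$ cancels it exactly, giving a continuous function on the compact interval $[0,2]$, hence bounded. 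This yields $N_k(x) \ll \frac{x}{2^k}\log\frac{x}{2^k}$ in this regime.

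In the bounded regime $x/2^k < C_0$, I would argue that $N_k(x)$ is bounded by an absolute constant. Any $n \in \mathbb{P}_k$ with $n \le x < C_0\, 2^k$ factors uniquely as $n = 2^j m$ with $m$ odd and $\Omega(m)=k-j$; the lower bound $n \ge 2^j\cdot 3^{k-j}$ combined with $n \le C_0\, 2^k$ forces $(3/2)^{k-j} \le C_0$, so $k-j$ is bounded by an absolute constant, and then $m \le C_0 \cdot 2^{k-j}$ lies in a bounded interval and also has $\Omega(m)$ bounded, so only $O(1)$ pairs $(j,m)$ occur.

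Combining the two regimes gives the stated bound, with the additive $+1$ absorbing the bounded regime (and also the trivial case $x < 2^k$ where $N_k(x)=0$ but $\log(x/2^k)$ is non-positive). I do not anticipate a real obstacle here; the only mildly delicate point is confirming the uniformity in $k$ of the Balazard--Delange--Nicolas asymptotic and the boundedness of $(2-\rho)G(\rho)$ up to the pole, both of which are essentially bookkeeping.
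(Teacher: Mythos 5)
Your proposal is correct and follows essentially the same route the paper intends: the corollary is stated as an immediate consequence of Proposition \ref{BDN}, using the bound $\sum_{0 \le j < k} \bigl(2\log\log\frac{x}{2^k}\bigr)^j/j! \le \bigl(\log\frac{x}{2^k}\bigr)^2$ together with the fact (also used in Lemma \ref{1/n asymp}) that $(2-\rho)G(\rho) \asymp 1$ on $[0,2)$, the additive $+1$ absorbing the range where $x/2^k$ is bounded. Your explicit $O(1)$ count in the bounded regime is just a spelled-out version of what the paper leaves implicit, so there is nothing to add.
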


Using Proposition \ref{improvement} instead of \cite[Lemma 2]{Elem_upperbound} we give an elementary proof of much weaker result:

\begin{theorem} \label{N_k-constant}
For all $x \ge 3$ and $k \ge 1$
    \begin{equation} \label{le ?}
    N_k(x) \le 1.35 \, k^3 \frac{x \log x}{2^k}.
    \end{equation}
\end{theorem}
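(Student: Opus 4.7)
The plan mirrors the elementary argument in \cite{Elem_upperbound} that gives $N_k(x) \ll k^4 \, 2^{-k} x \log x$, substituting our Proposition~\ref{improvement} for \cite[Lemma 2]{Elem_upperbound}. Since Proposition~\ref{improvement} removes the $(k+1)$ factor present in that lemma, this substitution saves exactly one factor of $k$, reducing $k^4$ to $k^3$. Concretely, $\mathbb{P}_k$ is primitive and every $n \in \mathbb{P}_k \cap [1, x]$ satisfies $P(n) \le x/2^{k-1}$ (since $n/P(n) \in \mathbb{P}_{k-1}$ is at least $2^{k-1}$). Applying Proposition~\ref{improvement} with $N = x/2^{k-1}$, and bounding the resulting product via an explicit form of Mertens' theorem (using the identity $(1 - 2/p)(1 - 1/p)^{-2} = 1 - (p-1)^{-2}$ for $p > 2$), yields
\[
\sum_{n \in \mathbb{P}_k \cap [1, x]} \frac{1}{n} \;\le\; C_0 \, 2^{-k} \bigl(\log(x/2^{k-1})\bigr)^{2}
\]
for an absolute constant $C_0$. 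Together with the trivial $N_k(x) \le x \sum_{n \in \mathbb{P}_k \cap [1, x]} 1/n$, this already gives $N_k(x) \le C_0 \, 2^{-k} x (\log x)^2$.

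To convert $(\log x)^2$ into $k^3 \log x$, I would split into regimes. When $k \ge \sqrt{\log x}$, $(\log x)^2 \le k^2 \log x \le k^3 \log x$ and the claim with constant $1.35$ follows provided $C_0 \le 1.35$. When $k < \sqrt{\log x}$, either the trivial bound $N_k(x) \le x$ already satisfies the claim (which happens whenever $\log x \ge 2^k/(1.35 \, k^3)$), or else a short induction on $k$ using
\[
N_k(x) \;\le\; \sum_{p \le x^{1/k}} N_{k-1}(x/p)
\]
together with Mertens' estimate $\sum_{p \le x^{1/k}} 1/p \le \log(\log x / k) + O(1)$ and the induction hypothesis closes the argument. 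The factor of $k$ saved by Proposition~\ref{improvement} is precisely what makes this induction terminate at $k^3$ rather than $k^4$.

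The main obstacle is the explicit numerical constant $1.35$: one must sharpen the Mertens estimate enough to obtain $C_0$ sufficiently small, and then glue the three regimes (Proposition-based, inductive, and trivial) together without incurring losses. Each individual estimate is routine, but achieving the precise constant $1.35$ forces genuine care at every step.
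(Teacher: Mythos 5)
Your high-level intent (substitute Proposition~\ref{improvement} for \cite[Lemma 2]{Elem_upperbound} inside the elementary argument of \cite{Elem_upperbound} to save one factor of $k$) matches the paper, but the concrete argument you sketch is not that argument, and it breaks down in the main regime. Applying Proposition~\ref{improvement} with $N = x/2^{k-1}$ together with the trivial bound $N_k(x) \le x \sum_{n \in \mathbb{P}_k \cap [1,x]} 1/n$ gives $N_k(x) \ll 2^{-k} x (\log x)^2$, which beats the target $1.35\, k^3 2^{-k} x \log x$ only when $\log x \lesssim k^2$ (your first regime; there even $C_0 = 2.486$ suffices for $k \ge 2$, so the constant is not the real obstacle). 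In the remaining regime $k^2 < \log x < 2^k/(1.35\,k^3)$ (nonempty as soon as $2^k > 1.35\,k^5$, e.g.\ $k \ge 24$), your induction does not close: inserting the hypothesis $N_{k-1}(y) \le 1.35\,(k-1)^3\, y \log y / 2^{k-1}$ into $N_k(x) \le \sum_{p \le x^{1/k}} N_{k-1}(x/p)$ and using $\log(x/p) \le \log x$ forces the requirement $2(k-1)^3 \sum_{p \le x^{1/k}} p^{-1} \le k^3$, i.e.\ $\sum_{p \le x^{1/k}} p^{-1} \lesssim \tfrac12$ for large $k$; but by Mertens $\sum_{p \le x^{1/k}} p^{-1} \approx \log(\log x / k) + M$, which in that regime can be as large as about $k \log 2 - 4 \log k$. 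So each inductive step loses a factor of order $\log\log x - \log k$, and the recursion can only recover a Hardy--Ramanujan type bound, not the uniform $2^{-k} k^3 x \log x$ estimate.

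What is missing is the mechanism that turns the second factor $\log x$ into $k^2$: the decomposition of \cite{Elem_upperbound}, reproduced in the paper as Lemma~\ref{copy}. One discards the at most $x/2^k$ integers divisible by $t^2$ for some $t > 2^k$, writes each remaining $n \in \mathbb{P}_k \cap [1, x]$ as $n = n_1 n_2$ with $P(n_1) \le 2^k < p(n_2)$ (so $n_2$ is squarefree with $\omega(n_2) = k - \Omega(n_1)$), counts the rough parts $n_2$ by the divisor-function estimate of Lemma~\ref{c} (this is where the single factor $\log x$ comes from), and only then applies Proposition~\ref{improvement} (in its explicit form, Lemma~\ref{improvement_const}) to the smooth parts with $N = 2^k$, so that $(\log N)^2 = (k \log 2)^2$ supplies $k^2$ and the sum over $j = \Omega(n_1) \le k$ supplies the last factor of $k$. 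Your proposal never evaluates the harmonic sums at the threshold $2^k$, and neither the trivial bound $N_k(x) \le x \sum 1/n$ nor the proposed induction can substitute for this step; sharpening Mertens to reduce $C_0$, as you suggest, cannot repair it either, since the loss in your regime three is a growing factor rather than a constant.
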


The proof remains the same as in \cite{Elem_upperbound} and we just do some numerical estimates for the constant.

\subsection{Open questions}
Is $U(z)$ continuous on $(0, 2)$? For which $z$ do we have $U(z) = \gamma_1(z)$? Is this the case for $z$ that are sufficiently close to $0$?
What is the asymptotic of $D_2(N)$?

\begin{hypo*}
$U(z) = \gamma_1(z)$ for $z \in (0, 1]$.
\end{hypo*}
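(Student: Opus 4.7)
The lower bound $U(z)\ge \gamma_1(z)$ is trivial since $\mathbb{P}$ is itself a primitive set, so the content of the Conjecture is the reverse inequality $U(z)\le \gamma_1(z)$ throughout $(0,1]$. The plan is to extend Lichtman's 2022 proof of the $z=1$ case, substituting $f(p)=z/p$ into Theorem \ref{main} (already available in this paper) in place of Erdős's classical inequality \eqref{Erd1}. The most transparent sufficient condition is the Erdős $z$-strength of every prime: if $f_z(A)\le f_z(p)$ for every prime $p$ and every primitive $A\subset L_p$, then for an arbitrary primitive set $A$ the partition by smallest prime factor $A=\bigcup_{p}(A\cap L_p)$ yields $f_z(A)\le\sum_p f_z(p)=\gamma_1(z)$.

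I would carry this out in two stages. First, Theorem \ref{U(z) main}C already gives Erdős $z$-strength of all sufficiently large primes, uniformly on $[\varepsilon,2]$ for any $\varepsilon>0$. I would reinspect that proof to check it does not degenerate as $z\to 0^{+}$, using the expansions $f_z(p)=z(\log p)^{-z}/p\sim z/p$ and $\prod_{q\prec p}(1-z/q)\sim 1-z\sum_{q\prec p}q^{-1}$ to extract a majorant uniform in $z\in(0,1]$. Second, for the finitely many small odd primes not covered by Theorem \ref{U(z) main}A (which treats $z\in[0.44,2]$), the numerical estimates behind part A should extend down to $z\in(0,0.44]$ by essentially the same computation, since the relevant sums are continuous in $z$ and the slack present at $z=0.44$ should propagate to smaller $z$.

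The principal obstacle is the prime $p=2$, exactly as in the $z=1$ case: the excerpt explicitly records that the Erdős $1$-strength of $2$ remains open. Lichtman's proof at $z=1$ does not in fact establish this strength directly, but performs a more global accounting, working with $L$-primitive sets under an ordering $\preceq$ that places $2$ late and re-aggregating contributions across different Lichtman cones $L_p$. My plan is to import that ordering (or re-optimise it as a function of $z$) and verify that the chain of numerical inequalities Lichtman uses survives the replacement of $1/(a\log a)$ by $z^{\Omega(a)}/(a(\log a)^z)$. Several of Lichtman's bounds are quantitatively tight at $z=1$, so preserving them uniformly throughout $(0,1]$, and especially as $z\to 0^{+}$ where Mertens-type constants collapse, is the chief technical hurdle.

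A complementary soft attack is a continuity/connectedness argument. Since $U$ is a supremum of $z$-continuous functions it is lower semicontinuous, $\gamma_1$ is continuous on $(0,2)$, and $U\ge \gamma_1$ always, so the equality locus $E:=\{z\in(0,1]:U(z)=\gamma_1(z)\}$ is closed. By Theorem \ref{U(z) main}B, $E$ contains a neighbourhood of $1$; openness of $E$ at an interior point $z_0$ would follow from strict slack $f_{z_0}(A)<f_{z_0}(p)$ for every primitive $A\subsetneq\{p\}$ in $L_p$ and all but finitely many $p$, propagated to a small neighbourhood of $z_0$ via the uniformity in Theorem \ref{U(z) main}C. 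If this openness can be pushed all the way to $z=0$, the conjecture follows, but even then the small primes, and in particular $2$, must be treated by a direct Lichtman-style global argument; that is where I expect any proof to concentrate its real work.
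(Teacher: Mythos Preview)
This statement is a \emph{Conjecture} in the paper, not a theorem: the paper does not prove it, and your submission is a strategy outline rather than a proof. What you should notice is that the paper itself explains why the very approach you propose breaks down.

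Your first stage is to push Theorem~\ref{U(z) main}C (Erd\H{o}s $z$-strength of large primes) uniformly to $z\to 0^{+}$. The paper's Remark following the proof of Theorem~\ref{U(z) main} says explicitly that this fails: the criterion $b_q(z)\le 1$ rests on $I(z)<1$, but $\lim_{z\to 0}I(z)^{1/z}=1$, while the Mertens error in $\mu_x(z)$ does not decay fast enough to compensate. Concretely, $b_q(z)=I(z)\,r_q(z)\mu_q(z)/m_q(z)$, and for small $z$ the factor $I(z)$ is only $1-O(z)$ while $r_q(z)\mu_q(z)/m_q(z)\ge 1$ with slack that is also $O(z)$ at best; no uniform bound $b_q(z)<1$ comes out. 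So ``reinspecting that proof to check it does not degenerate'' would show that it does.

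Your second stage assumes the numerical slack at $z=0.44$ in Theorem~\ref{U(z) main}A propagates downward. The computation behind part~A is precisely a verification that $b_q(z)<1$ for $z\in[0.44,2]$, and the lower endpoint $0.44$ is where this stops holding, not an artifact of laziness. The same obstruction as above applies.

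The ``soft'' continuity argument also has a gap. Lower semicontinuity of $U$ and continuity of $\gamma_1$ give that $E=\{z:U(z)=\gamma_1(z)\}$ is closed in $(0,1]$, but your openness claim needs strict slack $f_{z_0}(A)<f_{z_0}(p)$ uniformly over the (infinite) family of primitive $A\subset L_p$, which is exactly the Erd\H{o}s $z_0$-strength you are trying to establish; the argument is circular unless you already have an effective version of the strength at $z_0$, and for $p=2$ you have none at any $z$.

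Finally, at $z=1$ Lichtman does not prove Erd\H{o}s $1$-strength of $2$; he handles primitive sets with $2\notin A$ by a separate numerical bound $f_1(A)<1.60<\gamma_1(1)$. Extending that bound to all $z\in(0,1]$ is not ``the same computation'': the target $\gamma_1(z)\to 0$ as $z\to 0$, so any fixed numerical ceiling for $f_z(A)$ is useless in that limit. This is the real content of the conjecture, and nothing in your outline addresses it.
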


\section{Erdős inequality for primitive sets: Theorem \ref{main}}

\begin{lemma} \label{L_a intersection}
    Let the set of primes be equipped with some linear order. For any integers $a, a'$, if $L_a \cap L_{a'} \ne \varnothing$, then $a \in L_{a'}$ or $a' \in L_a$. 
\end{lemma}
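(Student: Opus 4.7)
The plan is to recharacterize $L_a$ purely in terms of the $\preceq$-valuations of its elements, after which the lemma reduces to a clean case split on the relative order of $P'(a)$ and $P'(a')$.

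First I would rewrite the definition in coordinates. Fix $a$ and set $q = P'(a)$. Writing $n = ab$ with $p'(b) \succeq q$ means that no prime $\prec q$ divides $b$, so the condition $n \in L_a$ is equivalent to
\[
v_p(n) = v_p(a) \text{ for every prime } p \prec q, \qquad v_q(n) \ge v_q(a) \ge 1,
\]
with no constraint on primes $\succ q$. In particular, given $n$, the elements $a$ with $n \in L_a$ are parametrised by a choice of prime $q \mid n$ and an exponent $v_q(a) \in \{1, \dots, v_q(n)\}$: the remaining valuations of $a$ are forced to coincide with those of $n$ on primes $\prec q$ and to vanish on primes $\succ q$.

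Now suppose $n \in L_a \cap L_{a'}$; write $q = P'(a)$ and $q' = P'(a')$, and by symmetry assume $q \preceq q'$. If $q \prec q'$, then every prime $p \prec q$ is also $\prec q'$, so
\[
v_p(a) = v_p(n) = v_p(a') \quad (p \prec q),
\]
while $q$ itself is $\prec q'$, giving $v_q(a') = v_q(n) \ge v_q(a)$. For $p \succ q$ we have $v_p(a) = 0 \le v_p(a')$. Hence $a \mid a'$, and the quotient $a'/a$ has $v_p(a'/a) = 0$ for all $p \prec q$, i.e.\ $p'(a'/a) \succeq P'(a)$, so $a' \in L_a$. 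If $q = q'$, the same comparison of valuations for $p \prec q$ still holds; after possibly swapping $a$ and $a'$ we may assume $v_q(a) \le v_q(a')$, and the identical argument gives $a \mid a'$ and $a' \in L_a$.

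The main (and only) obstacle is bookkeeping: one has to resist confusing ``$\prec$'' with ``$\preceq$'' in the definition of $L_a$ and in the condition $p'(b) \succeq P'(a)$. Once the valuation-wise description of $L_a$ is stated, the case split on the $\preceq$-comparison of $P'(a)$ and $P'(a')$ finishes the proof in a few lines.
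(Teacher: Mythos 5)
Your proof is correct. It rests on the same observation as the paper's own argument: $n \in L_a$ forces $a$ to be a $\preceq$-initial segment of the prime factorization of $n$ (the paper phrases this via the chain of prefixes $n_1 \mid n_2 \mid \cdots$ with $n_l \in L_{n_k}$ for $l \ge k$, you via the valuation-wise description and a case split on $P'(a)$ versus $P'(a')$), and comparability of two such segments yields $a' \in L_a$ or $a \in L_{a'}$.
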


\begin{proof}
For $n \in \mathbb{N}$ we define a sequence of numbers by the recursive formula $n_1 = p'(n)$, $n_k = p'(n / n_{k-1}) n_{k-1}$. Then $n_r = n$ for $r \ge \Omega(n)$. 

Obviously 
$n_k | n_{k + 1}$ for each $k$. 
Note that $n \in L_a$ iff $a = n_r$ for some $r \ge 1$. Moreover $n_{l} \in L_{n_k}$ for each $l \ge k$.
Hence if $n \in L_a \cap L_{a'}$, then $a = n_l, a' = n_k$ for some $l, k$. If $l \ge k$, then $a \in L_{a'}$ and if $l < k$, then $a' \in L_a$.
\end{proof}

\begin{proof}[Proof of Theorem \ref{main}]

It is enough to prove inequality (\ref{main_ineq}) for finite $A$, because all terms in the sum over $A$ are non-negative. 

Denote by $P$ the set of prime divisors of $(\prod_{a \in A} a)$. Let us introduce a new linear order $\preceq'$ on $\mathbb{P}$, such that $\preceq'$ and $\preceq$ coincide on $P$ and for any $p_1 \in P, p_2 \in \mathbb{P} \setminus P$ we let $p_1 \preceq' p_2$. 

Then
\[
\sum_{a \in A} f(a) \prod_{p \prec P'(a)} \left(1 - f(p) \right) \le 
\sum_{a \in A} f(a) \prod_{\substack{p \prec P'(a) \\ p \in P}} \left(1 - f(p) \right) = \sum_{a \in A} f(a) \prod_{p \prec' P'(a)} \left(1 - f(p) \right).
\]

Hence it is enough to prove inequality (\ref{main_ineq}) for linear orders such that for any $p_1 \in P, p_2 \in \mathbb{P} \setminus P$ we have $p_1 \preceq p_2$. We assume that $\preceq$ has such property. In particular for any $a \in A$ the set $\{p : p \prec P'(a)\}$ is finite.

Also it is enough to prove (\ref{main_ineq}) in the case $0 < f(p) < 1$ for each $p$, because then we can take a limit.

For $a \in A$ let us denote $P_a \vcentcolon = \prod_{p \prec P'(a)} p$. 
Let $g$ be a non-negative completely multiplicative arithmetic function and suppose that $\sum_{n} g(n)$ converges to a positive number. The sets $L_a$ are disjoint for distinct $a \in A$, otherwise Lemma \ref{L_a intersection} gives a contradiction with $L$-primitiveness of $A$. Hence
\[
\sum_{\substack{n \in \mathbb{N}}} g(n) \ge \sum_{a \in A} \sum_{n \in L_a} g(n) = \sum_{a \in A} \sum_{\substack{P'(a) \preceq p'(b)}} g(a b) =  \sum_{a \in A} g(a)  \sum_{\substack{P'(a) \preceq p'(b)}} g(b) =
\]
\[
\sum_{a \in A} g(a)  \sum_{(b, P_a) = 1} g(b) = \sum_{a \in A} g(a) \sum_{m \in \mathbb{N}} \sum_{d | P_a} \mu(d) g(d m) = 
\]
\[
\sum_{m \in \mathbb{N}} g(m) \sum_{a \in A} g(a) \sum_{d | P_a} \mu(d) g(d) = \left( \sum_{m \in \mathbb{N}} g(m) \right) \sum_{a \in A} g(a) \prod_{p \prec P'(a)} \left(1 - g(p) \right).
\]
Now we divide both sides of this inequality by $\sum_n g(n)$. This gives us
\[
\sum_{a \in A} g(a) \prod_{p \prec P'(a)} \left(1 - g(p) \right) \le 1.
\]

Now take $g(p) = f(p)$ for $p | P$ and $g(p) = 0$ for other primes. The sum $\sum_n g(n)$ converges and we obtain (\ref{main_ineq}).
\end{proof}

\section{$\gamma_k(z)$ functions: Theorems \ref{gamma_k(z) main thm}, \ref{tightness} and Proposition \ref{H-L Tauber}} \label{computations}

\begin{lemma} \label{sum to int}
\[
\frac{1}{n (\log n)^{z}} = \frac{1}{\Gamma(z)} \int_1^{\infty} n^{-s} (s - 1)^{z - 1} ds.
\]
\end{lemma}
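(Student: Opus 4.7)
The plan is to reduce the integral to the definition of the Gamma function by a linear substitution in $s$. Since the identity is claimed for $n$ with $\log n > 0$ (i.e.\ $n \ge 2$; the case $n=1$ is excluded because $(\log n)^{-z}$ is undefined), I may assume $\log n > 0$ throughout.

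First I would make the change of variables $u = (s-1)\log n$, so that $s = 1 + u/\log n$ and $ds = du/\log n$. As $s$ runs from $1$ to $\infty$, the new variable $u$ runs from $0$ to $\infty$. Under this substitution,
\[
n^{-s} = n^{-1} e^{-(s-1)\log n} = \frac{e^{-u}}{n},
\qquad
(s-1)^{z-1} = \frac{u^{z-1}}{(\log n)^{z-1}}.
\]
Substituting and collecting the factors of $\log n$ gives
\[
\int_1^{\infty} n^{-s}(s-1)^{z-1}\, ds
= \frac{1}{n(\log n)^{z}} \int_0^{\infty} e^{-u} u^{z-1}\, du.
\]

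Now the remaining integral is, by the standard definition of the Gamma function (valid for $z > 0$, which is the range we are working in), equal to $\Gamma(z)$. Dividing both sides by $\Gamma(z)$ yields the claimed identity
\[
\frac{1}{n(\log n)^{z}} = \frac{1}{\Gamma(z)} \int_1^{\infty} n^{-s}(s-1)^{z-1}\, ds.
\]

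There is really no obstacle here; this is a one-line computation whose only mild subtlety is checking that the integral converges (near $s = 1$ the integrand behaves like $(s-1)^{z-1}$, which is integrable for $z > 0$, and at infinity the factor $n^{-s}$ decays exponentially, which dominates the polynomial factor). I would include a brief remark on these convergence checks and on the tacit assumption $n \ge 2$.
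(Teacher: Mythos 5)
Your proof is correct and is essentially the same as the paper's: the paper also reduces the integral to $\Gamma(z)$ via the substitutions $s \mapsto s+1$ and then $t = s\log n$, which your single change of variables $u = (s-1)\log n$ combines into one step. Your added remarks on convergence and on the assumption $n \ge 2$ are fine but not a difference in method.
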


\begin{proof}
\[
\int_1^{\infty} n^{-s} (s - 1)^{z - 1} ds = \frac{1}{n} \int_0^{\infty} e^{- s \log n} s^{z - 1} ds = 
\]
\[
\frac{1}{n (\log n)^z} \int_0^{\infty} e^{-t} t^{z-1} dt = \frac{\Gamma(z)}{n (\log n)^z}.
\]
\end{proof}

\begin{prop} \label{compute_f_k}
Let $P_k(s) \vcentcolon = \sum_{\Omega(n) = k} n^{-s}$, $P(s) = P_1(s)$. Then
\[
P(s) = \sum_{m \ge 1} \frac{\mu(m)}{m} \log \zeta(ms), \quad  P_k(s) = \frac{1}{k} \sum_{j = 1}^{k} P(j s) P_{k - j}(s),
\]
\[
\gamma_k(z) = \frac{z^k}{\Gamma(z)} \int_1^{\infty} P_k(s) (s - 1)^{z - 1} ds.
\]
\end{prop}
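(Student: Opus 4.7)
My plan is to establish the three identities in turn via formal manipulations of generating series combined with the integral representation from Lemma \ref{sum to int}. For the first identity, I would expand $\log\zeta(s) = \sum_p \sum_{k \ge 1} \frac{p^{-ks}}{k}$ via the Euler product, substitute $s \mapsto ms$, and form the Möbius-weighted sum $\sum_{m \ge 1} \frac{\mu(m)}{m} \log\zeta(ms)$. Reordering the triple sum (absolutely convergent for $\mathrm{Re}(s) > 1$) and parameterizing by $n = km$, the inner sum contributes $\frac{1}{n}\sum_{m \mid n} \mu(m) = [n = 1]$, so everything collapses to $\sum_p p^{-s} = P(s)$.

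For the recurrence, I would introduce the bivariate generating series $F(x, s) \vcentcolon = \sum_{k \ge 0} P_k(s) x^k = \prod_p (1 - xp^{-s})^{-1}$ and take its logarithmic derivative in $x$. The computation
\[
\frac{F_x(x,s)}{F(x,s)} = \sum_p \frac{p^{-s}}{1 - xp^{-s}} = \sum_{m \ge 1} x^{m-1} P(ms)
\]
rearranges to $F_x = F \cdot \sum_{m \ge 1} x^{m-1} P(ms)$, and extracting the coefficient of $x^{k-1}$ on both sides yields exactly $k P_k(s) = \sum_{j=1}^{k} P(js) P_{k-j}(s)$.

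For the integral formula, I would apply Lemma \ref{sum to int} to each $a \in \mathbb{P}_k$ (with $k \ge 1$ ensuring $a \ge 2$, so $\log a > 0$), factor out $z^{\Omega(a)} = z^k$, and swap the summation over $a \in \mathbb{P}_k$ with the integral over $s \in (1, \infty)$. Since the integrand $a^{-s}(s-1)^{z-1}$ is non-negative for $z > 0$ and $s > 1$, Tonelli's theorem justifies the interchange with no further analysis, producing $\gamma_k(z) = \frac{z^k}{\Gamma(z)} \int_1^\infty P_k(s)(s-1)^{z-1}\,ds$. I do not expect a genuine obstacle in any of these steps; the only point requiring mild care is the absolute convergence of the Möbius-weighted series in the first identity, which follows from the bound $|\log\zeta(ms)| \ll 2^{-m\,\mathrm{Re}(s)}$ for large $m$ and is standard.
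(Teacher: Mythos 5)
Your proposal is correct and takes essentially the same route as the paper: the integral formula is obtained from Lemma \ref{sum to int} by interchanging the sum over $\mathbb{P}_k$ with the integral (your Tonelli justification is the implicit step, valid since the integrand is non-negative and $k\ge 1$ gives $a\ge 2$), while for the first two identities the paper simply cites \cite{Lichtman2}, and the standard arguments you supply — Möbius inversion of $\log\zeta(ms)$ and extraction of the coefficient of $x^{k-1}$ from the logarithmic derivative of $\prod_p(1-xp^{-s})^{-1}$ — are exactly the derivations found there. In short, no gap; you have just made explicit what the paper delegates to the citation.
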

\begin{proof}
    The last formula follows from Lemma \ref{sum to int}. See \cite[Section 3]{Lichtman2} for the first two formulas. 
\end{proof}

This proposition allows us to compute $\gamma_k(z)$ efficiently.

Following \cite{Gorodetsky}
let us introduce for $s \ge 1$ and $|w| < 2$
\[
F_{y}(s, w) \vcentcolon = \sum_{\substack{n \ge 1 \\ p(n) > y}} \frac{w^{\Omega(n)}}{n^s},
\quad \quad
G_y(s, w) \vcentcolon = F_{y}(s, w) (s-1)^{w} = F_{y}(s, w) \zeta(s)^{-w} (\zeta(s) (s-1))^{w}.
\]

The function $G(s, w)$ for each fixed $s$ has a meromorphic continuation to the whole complex plane and it has simple poles at $w = p^{s}$.

In particular
\begin{equation} \label{G_y(1, w)}
G_y(1, w) = \prod_{p \le y} \left(1 - \frac{1}{p} \right)^{w} \prod_{p > y} \left(1 - \frac{w}{p} \right)^{-1} \left( 1 - \frac{1}{p} \right)^w, 
\end{equation}

For a smooth function $H(s, w)$ by $[w^i]H(s, w)$ we denote the coefficient of $w^i$ in the Taylor expansion of $H(s, w)$ at $w = 0$, which is a function of $s$.
Also denote
\[
H^{(a, b)}(s, w) \vcentcolon = \frac{\partial^{a + b}}{\partial s^a \partial w^b} H(s, w).
\]

\begin{lemma} \label{G_y bound}
Let $y_1$ be the smallest prime greater than $y$. For $y \ge 2$ and $m \ge 0$,
\[
[w^i] G_y^{(m, 0)}(s, w) \ll_{m, y} \frac{(i + 1)^m}{y_1^i}
\]
uniformly for $s \in [1, 2]$ and $i \ge 0$.
\end{lemma}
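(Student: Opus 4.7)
The plan is to bound $c_i(\sigma) := [w^i] G_y(\sigma, w)$ first for $\sigma = s \in [1,2]$ via a residue calculation in $w$, then promote to complex $\sigma$ and use Cauchy's integral formula in $s$ to handle all derivatives $m \ge 1$ at once. The key observation driving everything is that $G_y(s, \cdot)$, as a meromorphic function of $w$, has its nearest pole at $w = y_1^s$, with the next pole not until $|w| = y_2^s$ (where $y_2$ is the prime following $y_1$).

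First I would make the analyticity of $G_y$ in $s$ across $s = 1$ precise. Although $(s-1)^w$ has a branch at $s = 1$, the factorization
\[
G_y(s,w) = \widehat Z(s,w)\,(\zeta(s)(s-1))^w \prod_{p \le y}(1 - w/p^s),
\]
where $\widehat Z(s,w) := \prod_p (1-w/p^s)^{-1}(1-1/p^s)^w$ converges uniformly on compacta of $\{\operatorname{Re}(s) > 1/2\}$, shows that $G_y$ is jointly analytic in $(s,w)$ on a complex neighborhood $U$ of $[1,2]$: indeed $\zeta(s)(s-1)$ is analytic and equal to $1$ at $s = 1$, so $(\zeta(s)(s-1))^w = \exp(w \log(\zeta(s)(s-1)))$ has a single-valued branch. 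In particular $c_i$ extends to an analytic function on $U$.

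For the case $m = 0$, I would isolate the nearest pole by writing $G_y(\sigma, w) = G_{y_1}(\sigma, w)/(1 - w/y_1^\sigma)$, with $G_{y_1}$ holomorphic in $w$ for $|w| < y_2^\sigma$, and then shift the Cauchy-coefficient contour past $w = y_1^\sigma$ out to a radius $R(\sigma) = (|y_1^\sigma| + |y_2^\sigma|)/2$. The residue theorem yields
\[
c_i(\sigma) = \frac{G_{y_1}(\sigma, y_1^\sigma)}{y_1^{i\sigma}} + \frac{1}{2\pi i}\oint_{|w| = R(\sigma)}\frac{G_y(\sigma, w)}{w^{i+1}}\,dw.
\]
The first term has modulus at most $C_y/y_1^{i\operatorname{Re}(\sigma)}$ by continuity of $G_{y_1}(\sigma, y_1^\sigma)$ on $\overline U$, and the contour integral is of the same order of magnitude because $R(\sigma)^i \ge y_1^{i\operatorname{Re}(\sigma)}$ and $|G_y(\sigma, w)| \ll_y |y_2^\sigma - y_1^\sigma|^{-1}$ on the circle. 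This produces $|c_i(\sigma)| \ll_y y_1^{-i\operatorname{Re}(\sigma)}$ uniformly for $\sigma \in U$.

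Finally, for general $m$, I would apply Cauchy's formula in $s$: for $s \in [1,2]$ and a disk $\{|\sigma - s| \le \delta\} \subset U$,
\[
\frac{d^m c_i(s)}{ds^m} = \frac{m!}{2\pi i}\oint_{|\sigma - s| = \delta}\frac{c_i(\sigma)}{(\sigma - s)^{m+1}}\,d\sigma,
\]
which together with the previous step and $\operatorname{Re}(\sigma) \ge 1 - \delta$ on the circle gives $|\partial_s^m c_i(s)| \ll_y m!\, y_1^{i\delta}\,\delta^{-m}/y_1^i$. Choosing $\delta = \min(1/(i+1), \delta_0)$ for a fixed $\delta_0$ less than the width of $U$, one has $y_1^{i\delta} \le y_1$ and $\delta^{-m} \ll_m (i+1)^m$, yielding the claimed bound. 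The main obstacle in this plan is really the analytic setup across $s = 1$ in Step 1; once that is in hand, the residue and Cauchy arguments are routine contour calculus. The factor $(i+1)^m$ emerges naturally because the Cauchy radius $\delta \sim 1/i$ (needed to keep $y_1^{i\delta}$ bounded against points with $\operatorname{Re}(\sigma) < 1$) forces $\delta^{-m} \sim i^m$.
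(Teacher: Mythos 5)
Your argument is correct, but it is not the route the paper takes: the paper gives no proof at all for this lemma, simply citing Remark 2.4 of the Gorodetsky--Lichtman--Wong paper, so your proposal supplies a self-contained justification. The three ingredients all check out. The factorization $G_y(s,w)=\widehat Z(s,w)\,(\zeta(s)(s-1))^w\prod_{p\le y}(1-w/p^s)$ does give joint analyticity of $G_y$ (hence of $c_i(\sigma)=[w^i]G_y(\sigma,w)$) on a fixed complex neighborhood of $[1,2]$, and it reproduces the paper's formula (\ref{G_y(1, w)}) at $s=1$, so you are working with the right continuation across the $\infty\cdot 0$ ambiguity in the definition $F_y(s,w)(s-1)^w$. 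The contour shift for $m=0$ is sound: on $|w|=R(\sigma)=(y_1^{\operatorname{Re}\sigma}+y_2^{\operatorname{Re}\sigma})/2$ only the simple pole at $w=y_1^\sigma$ is crossed, its residue contributes exactly $G_{y_1}(\sigma,y_1^\sigma)y_1^{-i\sigma}$, and the remaining integral is $\ll_y R^{-i}\le y_1^{-i\operatorname{Re}\sigma}$; the only imprecision is your bound ``$|G_y(\sigma,w)|\ll_y|y_2^\sigma-y_1^\sigma|^{-1}$'' --- what one actually needs (and has, by compactness of the shifted contour set and its positive distance to all poles $p^\sigma$, $p>y$) is a uniform $y$-dependent bound on that circle, which is a cosmetic rather than substantive issue. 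The final step, Cauchy's formula in $s$ with radius $\delta=\min(1/(i+1),\delta_0)$, legitimately trades the loss $y_1^{i\delta}\le y_1$ (absorbed into the $y$-constant) for the factor $\delta^{-m}\ll_{m,y}(i+1)^m$, and joint analyticity justifies the implicit interchange $[w^i]\partial_s^m G_y=\partial_s^m[w^i]G_y$. Compared with relying on the external reference, your approach costs a page of routine contour calculus but makes the dependence of the implied constant on $y$ and $m$ completely explicit and keeps the paper self-contained; it also isolates cleanly why the exponent of $(i+1)$ is exactly $m$ (each $s$-derivative costs one power of the Cauchy radius $\delta\sim 1/i$), which mirrors the heuristic that $G_y^{(m,0)}(s,\cdot)$ has a pole of order $m+1$ at $w=y_1^s$.
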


\begin{proof}
    See \cite[Remark 2.4]{Gorodetsky}.
\end{proof}

We have $\gamma_k(z) = \sum_{0 \le j \le k} \gamma_{k, j}(z)$, where
\[
\gamma_{k, j}(z) = \sum_{\substack{\Omega(n) = k \\ 2^{j} || n}} \frac{z^k}{n (\log n)^z}.
\]
Let us evaluate $\gamma_{k, j}(z)$.
Denote 
\[
P_{k, j}(s) \vcentcolon = \sum_{\substack{\Omega(n) = k \\ 2^{j} || n}} \frac{1}{n^s} = \sum_{\substack{\Omega(n) = k - j \\ (n , 2) = 1}} \frac{2^{-js}}{n^s}.
\]
Assume that $z \le 2$. Lemma \ref{sum to int} implies
\[
\gamma_{k, j}(z) = \frac{z^k}{\Gamma(z)} \int_1^{\infty} P_{k, j}(s) (s - 1)^{z-1} ds = \frac{z^k}{\Gamma(z)} I_{k, j} + B_{k, j},
\]
where
\[
I_{k, j} = \int_1^{2} P_{k, j}(s) (s - 1)^{z-1} ds.
\]
We have
\begin{multline*}
B_{k, j} = \frac{z^k}{\Gamma(z)} \int_2^{\infty} P_{k, j}(s) (s - 1)^{z-1} ds 
\ll z^{k + 1} \sum_{\substack{\Omega(n) = k \\ 2^{j} || n}} \frac{1}{n (\log n)^z} \int_{\log n}^{\infty} e^{-t} t^{z-1} dt = \\
z^{k + 1} \sum_{\substack{\Omega(n) = k \\ 2^{j} || n}} \frac{1}{n (\log n)^z} \left( \frac{(\log n)^{z-1}}{n} + (z-1) \int_{\log n}^{\infty} \frac{e^{-t} t^{z-1}}{t} dt \right)  \ll \\
z^{k+1} \sum_{\substack{\Omega(n) = k \\ 2^{j} || n}} \frac{1}{n^2 (\log n)} \ll 
z^{k+1} 2^{-2j} \sum_{m \ge 3^{k - j}} \frac{1}{m^2 (\log 2^j m)} \ll 
\frac{z^{k + 1} 3^{-k} (3/4)^j}{k}.
\end{multline*}

And
\[
I_{k, j} = \int_1^2 (s-1)^{z-1} 2^{-js} [w^{k-j}] F_2(s, w) \, ds.
\]

We have
\[
F_2(s, w) = (s-1)^{-w} G_2(s, w).
\]

Hence
\[
I_{k, j} = \sum_{l + i = k - j} \int_1^2 2^{-js} (s-1)^{z-1} \frac{(-\log(s-1))^l}{l!} \frac{1}{i!} G_2^{(0, i)}(s, 0) \, ds.
\]

Now we introduce
\[
I_{k, j}' = \sum_{l + i = k - j} \int_1^2 2^{-js} (s-1)^{z-1} \frac{(-\log(s-1))^l}{l!}  \frac{1}{i!} G_2^{(0, i)}(1, 0) \, ds.
\]

\[
I_{k, j}'' = \sum_{l + i = k - j} \int_1^2 2^{-js} (s-1)^{z} \frac{(-\log(s-1))^l}{l!}  \frac{1}{i!} G_2^{(1, i)}(1, 0) \, ds.
\]

\begin{lemma} \label{I_k = I_k'}
   For $0 < z \le 2$ we have 
   \[
   |I_{k, j} - I_{k, j}'| \ll  
    2^{-k} \left( \frac{2}{1 + z} \right)^{k - j} \sum_{i \le k - j} \left( \frac{1 + z}{3} \right)^i (i + 1).
   \]
    \[
    |I_{k, j} - I_{k, j}' - I_{k, j}''| \ll  
    2^{-k} \left( \frac{2}{2+z} \right)^{k - j} \sum_{i \le k - j} \left( \frac{2 + z}{3} \right)^{i} (i + 1)^2.
    \]
\end{lemma}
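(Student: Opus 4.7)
The plan is to replace $G_2^{(0,i)}(s, 0)$ inside $I_{k,j}$ by its Taylor approximation at $s = 1$: the zeroth-order approximation for the first inequality and the first-order approximation for the second. In each case the remainder is controlled by the fundamental theorem of calculus, giving
\[
\tfrac{1}{i!}\bigl|G_2^{(0,i)}(s, 0) - G_2^{(0,i)}(1, 0)\bigr| \ll (s - 1) \sup_{u \in [1, 2]} \tfrac{1}{i!}\bigl|G_2^{(1,i)}(u, 0)\bigr|,
\]
and similarly a factor $(s-1)^2$ times the second derivative for the second inequality. Lemma \ref{G_y bound} with $m \in \{1,2\}$ and $y = 2$ (so $y_1 = 3$) immediately gives the uniform bound $\tfrac{1}{i!}|G_2^{(m,i)}(u, 0)| \ll (i+1)^m / 3^i$, so the net effect is to replace the factor $(s-1)^{z-1}$ inside $I_{k,j}$ by $(s-1)^{z+r-1}$ at the cost of a factor $(i+1)^r / 3^i$, for $r \in \{1, 2\}$.

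Next, I would estimate the resulting model integrals
\[
\int_1^2 2^{-js}\, (s - 1)^{z + r - 1}\, \frac{|\log(s-1)|^l}{l!}\, ds
\]
by crudely bounding $2^{-js} \le 2^{-j}$ and substituting $u = -\log(s-1)$. This converts the integral into a Gamma-type integral and yields the clean bound $2^{-j} / (z + r)^{l + 1}$ after cancelling the $l!$ in the denominator.

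Finally, summing over $l + i = k - j$ with $l = k-j-i$ collapses the product of the polynomial factor $(i+1)^r$ and the geometric factor $((z+r)/3)^i$; rewriting the overall prefactor $2^{-j}/(z+r)^{k-j}$ as $2^{-k}(2/(z+r))^{k-j}$ matches the stated bounds exactly, with the stray factor $1/(z+r) \le 1$ absorbed into the implicit constant. The computation should be routine once the Taylor-remainder estimates and Lemma \ref{G_y bound} are in hand; the only point requiring care is the bookkeeping that traces the $2^{-k}$ prefactor in the final bound entirely to the crude estimate $2^{-js} \le 2^{-j}$, so that no finer cancellation is needed.
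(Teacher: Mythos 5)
Your proposal is correct and follows essentially the same route as the paper: Taylor-expand $\frac{1}{i!}G_2^{(0,i)}(s,0)$ at $s=1$ to order $r-1$, bound the remainder via Lemma \ref{G_y bound} (with the $1/i!$ normalization giving $(i+1)^r/3^i$), then estimate the model integrals by the substitution $t=-\log(s-1)$ together with the crude bound $2^{-js}\le 2^{-j}$, which produces $2^{-j}(z+r)^{-l-1}$ and, after summing over $l+i=k-j$ and rewriting $2^{-j}=2^{-k}2^{k-j}$, exactly the stated bounds. No gaps.
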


\begin{proof}[Proof of Lemma \ref{I_k = I_k'}]
    Lemma \ref{G_y bound} implies
    \[
    |G_2^{(0, i)}(s, 0) - G_2^{(0, i)}(1, 0)| \le (s - 1) \sup_{s' \in [1, 2]} |G_2^{(1, i)}(s, 0)| \ll (s-1) \frac{(i + 1)!}{3^i}.
    \]
    Thus
    \begin{multline*}
    |I_{k, j} - I_{k, j}'| \ll \sum_{l + i = k - j} \int_1^2 2^{-js} (s-1)^z  \frac{(-\log(s-1))^l}{l!} (i+1) 3^{-i} \, ds = \\
    \sum_{l + i = k - j} \int_0^{\infty} \frac{t^l}{l!} 2^{-j(1 + e^{-t})} e^{-t - zt}  (i+1) 3^{-i} \, dt \le \\
    \sum_{l + i = k - j} (i+1) 3^{-i} 2^{-j} \int_0^{\infty} \frac{t^l}{l!}  e^{-t (1 + z)}  \, dt = 
    \sum_{l + i = k - j} (i+1) 3^{-i} 2^{-j} (1 + z)^{-l-1}.
    \end{multline*}

    For $z \in (0, 2]$ we obtain
    \[
    |I_{k, j} - I_{k, j}'| \ll 2^{-j} \sum_{l + i = k - j} 3^{-i} (1 + z)^{-l} (i + 1)
    =
    2^{-k} \left( \frac{2}{1 + z} \right)^{k - j} \sum_{i \le k - j} \left( \frac{1 + z}{3} \right)^i (i+1).
    \]

    Lemma \ref{G_y bound} implies
    \[
    |G_2^{(0, i)}(s, 0) - G_2^{(0, i)}(1, 0) - (s-1) G_2^{(1, i)}(1, 0)| \le \frac{(s - 1)^2}{2} \sup_{s' \in [1, 2]} |G_2^{(2, i)}(s, 0)| \ll (s-1)^2 \frac{(i + 2)!}{3^i}.
    \]
    In the same way we obtain
    \[
    |I_{k, j} - I_{k, j}' - I_{k, j}''| \ll  
    2^{-k} \left( \frac{2}{2+z} \right)^{k - j} \sum_{i \le k - j} \left( \frac{2 + z}{3} \right)^{i} (i + 1)^2.
    \]
\end{proof}

Now we need to evaluate $I_{k, j}'$. Let us take $0 < \epsilon < z$. Then by Cauchy’s integral formula
\[
I_{k, j}' = \int_1^2 2^{-js} [w^{k - j}] \left( (s - 1)^{z-w-1} G_2(1, w) \right) ds = 
\]
\[
\frac{1}{2\pi i}\int_1^2 \oint_{|w| = \epsilon} 2^{-js} \frac{(s - 1)^{z-w-1} G_2(1, w)}{w^{k - j +1}} \, dw \, ds.
\]

By Fubini's theorem
\[
I_{k, j}' = \frac{1}{2\pi i} \oint_{|w| = \epsilon} \frac{G_2(1, w)}{w^{k - j + 1}}
\left( \int_1^2 2^{-js} (s - 1)^{z-w-1} ds \right) \, dw.
\]

Denote
\[
h_{j, z}(w) \vcentcolon = \int_1^2 2^{-js} (s - 1)^{z-w-1} ds.
\]
Integrating by parts, we obtain 
\[
h_{j, z}(w) = 2^{-2j} \left( \frac{1}{z-w} + \frac{j \log 2}{(z-w)(z-w+1)} + \frac{(j \log 2)^2}{(z-w)(z-w + 1) (z - w + 2)} + \ldots \right).
\]
This gives a meromorphic continuation of $h_{j, z}(w)$ to the whole complex plane. 

This series converges uniformly in $w$ on compact subset of $\mathbb{C}$ which do not contain its poles.

Take $R > \epsilon$, $R \notin \mathbb{Z} \cup (z +\mathbb{Z})$. Then $I_{k, j}' = R_{k, j} + M_{k, j}$, where
\[
R_{k, j} \vcentcolon = 
\frac{1}{2\pi i} \oint_{|w| = R} \frac{G_2(1, w)}{w^{k - j +1}}
h_{j, z}(w) \, dw, \quad \quad
M_{k, j} \vcentcolon = 
- \sum_{\epsilon < |w| < R} \operatorname{Res}_{w} \frac{G_2(1, w)}{w^{k - j +1}} h_{j, z}(w). 
\]

To evaluate the error term $R_{k, j}$ we will need a few lemmas.

\begin{lemma} \label{a^delta}
Let $a \ge 1/100$, $0 \le \delta \le 100$. Then
\[
\sum_{m \ge 0} \frac{a^m m^{\delta}}{m!} \ll a^{\delta} e^a.
\]
\end{lemma}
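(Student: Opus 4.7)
The plan is to split the sum at a threshold $m \approx Ca$ for a sufficiently large absolute constant $C$, handling the regimes of bounded and large $a$ separately. The heuristic is that the weights $a^m/m!$ concentrate around $m = a$, so $m^\delta$ effectively behaves like $a^\delta$; the deviation from this picture will be absorbed either by the constant $C^{100}$ coming from $\delta \le 100$, or by Stirling-type decay in the tail.

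For $a$ confined to a bounded interval $[1/100, A_0]$, both sides are bounded above and below by positive absolute constants: the left side is majorized by $\sum_{m\ge 0} A_0^m m^{100}/m!$, which converges, while $a^\delta e^a \ge (1/100)^{100} e^{1/100}$ since $\delta \le 100$. So the inequality is immediate here.

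For $a \ge A_0$ with $A_0$ a large absolute constant, fix $C$ to be chosen and split. On the main range, bound $m^\delta \le (Ca)^\delta \le C^{100} a^\delta$ and then extend the sum trivially:
\[
\sum_{0 \le m \le Ca} \frac{a^m m^\delta}{m!} \le C^{100} a^\delta \sum_{m\ge 0} \frac{a^m}{m!} = C^{100} a^\delta e^a.
\]
On the tail $m > Ca$, the ratio of consecutive terms equals
\[
\frac{a}{m+1}\left(1 + \frac{1}{m}\right)^{\delta} \le \frac{1}{C}\, e^{\delta/m},
\]
which is at most $1/2$ once $C$ and $A_0$ are large enough in terms of $\delta \le 100$. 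Hence the tail is bounded by twice its initial term. Stirling gives $a^{\lceil Ca \rceil}/\lceil Ca \rceil! \ll (e/C)^{Ca}$, and choosing $C$ so that $C(\log C - 1) > 2$ (e.g.\ $C = 10$) makes this $\ll e^{-a}$. Combined with $\lceil Ca \rceil^{\delta} \ll C^{100} a^\delta$, the tail is therefore $\ll a^\delta e^{-a} \le a^\delta e^a$.

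The only real point to verify is that $C$ can be chosen as a single absolute constant. This works because the geometric-decay threshold depends only on the absolute bound $\delta \le 100$, and then $A_0$ is fixed in tandem. The argument then reduces to routine bookkeeping of numerical constants in Stirling's approximation and the ratio estimate above, so no serious obstacle arises.
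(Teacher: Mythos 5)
Your proof is correct and follows essentially the same route as the paper: split the sum at a threshold proportional to $a$, bound the head by pulling out $m^{\delta}\le (Ca)^{\delta}\ll a^{\delta}$, and control the tail by geometric decay of the term ratio dominated by its first term. Your separate treatment of bounded $a$ and the explicit Stirling step are only cosmetic refinements of the paper's single-threshold argument with $[200a]+1$.
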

\begin{proof}

For all $m \ge [200 a] + 1$ we have
\[
\left( \frac{a^{m + 1} (m + 1)^{\delta}}{(m+1)!} \right) \left( \frac{a^{m} m^{\delta}}{m!} \right)^{-1} \le \frac{1}{2}.
\]
Hence
\[
\sum_{m \ge [200 a] + 1} \frac{a^m m^{\delta}}{m!} \ll \frac{a^{[200 a] + 1} a^{\delta}}{([200 a] + 1)!} \ll a^{\delta} e^{a}.
\]
And
\[
\sum_{m \le [200 a] + 1} \frac{a^m m^{\delta}}{m!} \ll
([200 a] + 1)^{\delta} \sum_{m \le [200 a] + 1} \frac{a^m}{m!} \ll a^{\delta} e^{a}.
\]
\end{proof}

\begin{lemma} \label{j^Re}
    Let $0 < z \le 2, j \ge 1, w \in \mathbb{C}, |w| \le 10$ and let $q$ be the closest number to $w$ of the form $z + m$, where $m \in \mathbb{Z}_{\ge 0}$. If $w \ne q$, then
    \[
    h_{j, z}(w) \ll 2^{-j}  \, \, \frac{j^{\operatorname{Re}(w) - z}}{|w - q|}.
    \]
\end{lemma}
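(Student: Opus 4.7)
The plan is to obtain a closed form for $h_{j,z}(w)$ in terms of the incomplete gamma function, then split off the complete gamma (which carries the polar structure at $w=z+m$) and bound each piece.

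First, substituting $u=jt\log 2$ in the integral defining $h_{j,z}(w)$ (valid for $\operatorname{Re}(w)<z$ and extended to all $w\not\in\{z+m:m\in\mathbb{Z}_{\ge 0}\}$ by meromorphy of the series representation displayed just before the lemma), I would obtain
\[
h_{j,z}(w)=2^{-j}(j\log 2)^{w-z}\,\gamma(z-w,\,j\log 2),
\]
where $\gamma(\alpha,x)=\int_0^x e^{-u}u^{\alpha-1}\,du$ is the lower incomplete gamma function. Since $\operatorname{Re}(w)-z\in[-12,10]$, the factor $(j\log 2)^{w-z}$ differs from $j^{\operatorname{Re}(w)-z}$ only by a bounded multiplicative constant, so it suffices to show $|\gamma(z-w,\,j\log 2)|\ll 1/|w-q|$.

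Second, decompose $\gamma(\alpha,x)=\Gamma(\alpha)-\Gamma(\alpha,x)$ with $\alpha:=z-w$ and $q=z+m_0$. For $\Gamma(\alpha)$ I would iterate the functional equation to write
\[
\Gamma(\alpha)=\frac{\Gamma(\alpha+m_0+1)}{\alpha(\alpha+1)\cdots(\alpha+m_0)}.
\]
Because $m_0$ is by definition the non-negative integer minimizing $|\alpha+m_0|$, a short case split ($m_0=0$ when $\operatorname{Re}(\alpha)\ge -1/2$, and $m_0\ge 1$ with $\operatorname{Re}(\alpha+m_0)\in[-1/2,1/2]$ otherwise) gives $\operatorname{Re}(\alpha+m_0+1)\ge 1/2$ together with $|\alpha+i|\ge 1/2$ for every $0\le i<m_0$. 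The numerator $\Gamma(\alpha+m_0+1)$ then lies in a compact subset of the half-plane $\{\operatorname{Re}(s)\ge 1/2\}$, where $\Gamma$ is bounded, and the denominator factors other than $\alpha+m_0$ are bounded below in absolute value by $(1/2)^{m_0}$, a constant. Hence $|\Gamma(z-w)|\ll 1/|\alpha+m_0|=1/|w-q|$.

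Third, for the upper incomplete gamma I would bound
\[
|\Gamma(z-w,\,j\log 2)|\le \int_{j\log 2}^{\infty}e^{-u}u^{\operatorname{Re}(z-w)-1}\,du,
\]
splitting at $u=1$: on $[1,\infty)$ the factor $u^{\operatorname{Re}(z-w)-1}$ is at most $u$, so the tail is $\ll (j\log 2+1)\,2^{-j}\ll 1$; on $[j\log 2,\,1]$ (a non-empty interval only when $j=1$) a direct estimate of $\int_{\log 2}^{1}u^{\operatorname{Re}(z-w)-1}\,du$ handles the possibly negative exponent and contributes $O(1)$. Thus $|\Gamma(z-w,\,j\log 2)|\ll 1$ uniformly in $j\ge 1$, and since $|w-q|\le |w|+|q|$ is bounded above, this is $\ll 1/|w-q|$. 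Combining this with the bound from Step 2 and reinstating the factor $2^{-j}j^{\operatorname{Re}(w)-z}$ from Step 1 yields the claim. The only mildly delicate point is the case analysis in Step 2, where one must verify the real-part inequalities so that the factors surrounding the singular $\alpha+m_0$ are uniformly bounded away from zero, regardless of which pole of $\Gamma$ is nearest; apart from that the argument is routine.
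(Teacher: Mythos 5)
Your argument is correct and takes a genuinely different route from the paper. The paper works directly with the series
\[
h_{j,z}(w) = \frac{2^{-2j}}{z-w}\left(1 + \sum_{m\ge 1}\frac{(j\log 2)^m}{(z-w+1)\cdots(z-w+m)}\right),
\]
peeling off the first $16$ denominators (which absorb the pole $1/|w-q|$, since at most one of $z-w+r$ for $0\le r\le 15$ can be small), bounding the remaining ratio $\prod_{15<r\le m}\frac{r}{z-w+r}$ by $m^{\operatorname{Re}(w)-z}$ through a logarithmic estimate, and finally invoking Lemma~\ref{a^delta} to conclude $\sum_m (j\log 2)^m m^{\operatorname{Re}(w)-z}/m!\ll 2^{\,j}\,j^{\operatorname{Re}(w)-z}$. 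You instead observe the closed form $h_{j,z}(w)=2^{-j}(j\log 2)^{w-z}\,\gamma(z-w,\,j\log 2)$, so that $j^{\operatorname{Re}(w)-z}$ falls out at once, and then split $\gamma=\Gamma-\Gamma(\cdot,x)$: the functional equation isolates the pole in $\Gamma(z-w)$ and produces $1/|w-q|$, while $\Gamma(\cdot,x)$ is entire and uniformly $O(1)$. Your version is more structural --- the factors $j^{\operatorname{Re}(w)-z}$ and $1/|w-q|$ come from two visibly separate sources and Lemma~\ref{a^delta} is not needed at all --- whereas the paper's is elementary and self-contained, working only with the series. Both are valid.

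One small slip in your Step 3: the exponent $\operatorname{Re}(z-w)-1$ ranges over $(-11,11]$, not $\le 1$, so the claim that $u^{\operatorname{Re}(z-w)-1}\le u$ on $[1,\infty)$ fails when $\operatorname{Re}(w)$ is sufficiently negative. The conclusion $|\Gamma(z-w,\,j\log 2)|\ll 1$ uniformly in $j\ge 1$ is still correct: use $u^{\operatorname{Re}(z-w)-1}\le u^{11}$ on $[1,\infty)$ with $\int_1^\infty e^{-u}u^{11}\,du<\infty$, and on the short piece $[\log 2,1]$ (present only for $j=1$) bound $u^{\operatorname{Re}(z-w)-1}\le (\log 2)^{-11}$. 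You should also record why $1\ll 1/|w-q|$, i.e.\ that $m_0\le 10$ (from $\operatorname{Re}(z-w)\ge -10$) so $|w-q|\le |w|+z+m_0\le 22$; you gesture at this but it is the step that makes the final reduction legitimate.
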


\begin{proof}
We have
\begin{multline*}
h_{j, z}(w) = \frac{2^{-2j}}{z-w} \left(1 + \sum_{m = 1}^{\infty} \frac{(j \log 2)^m}{m!} \prod_{l = 1}^m \left( \frac{z - w + l}{l} \right)^{-1} \right) \ll \\
2^{-2j} \left( \prod_{r = 0}^{15} (z - w + r)^{-1} \right) \sum_{m = 0}^{\infty}  \frac{(j \log 2)^m}{m!} \prod_{15 < r \le m} \left( \frac{z - w + l}{l} \right)^{-1} .
\end{multline*}

And for $m \ge 1$
\begin{multline*}
\prod_{15 < r \le m} \left( \frac{z - w + l}{l} \right)^{-1} = \exp \left(\sum_{15 < r \le m} - \log \left( 1 - \frac{w - z}{l} \right)  \right) \ll \\
\exp \left( \operatorname{Re}(w - z) \sum_{15 < r \le m} \frac{1}{r} \right) \ll \exp \left(  \operatorname{Re}(w - z) (\log m) \right) = 
m^{\operatorname{Re} w - z}.
\end{multline*}

Hence 
\[
h_{j, z}(w) \ll \frac{2^{-2 j}}{|w - q|} \sum_{m = 0}^{\infty} \frac{(j \log 2)^m \, m^{\operatorname{Re} w - z}}{m!} \ll \frac{2^{-j}}{|w - q|} j^{\operatorname{Re} w - z}.
\]
The last inequality follows from Lemma \ref{a^delta}.
\end{proof}

\begin{lemma} \label{int bound j^R}
    Let $1/5 < R < 5, j \ge 10^3$ and let $q$ be the closest number to $R$ of the form $z + m$, where $m \in \mathbb{Z}_{\ge 0}$. Let $\delta \vcentcolon = |q - R|$. Then
    
    \begin{align*} 
    \int_{-\pi}^{\pi} \frac{j^{\operatorname{Re}(R e^{i \varphi})}}{|q - R e^{i \varphi}|} \, d \varphi &\ll 
    \frac{j^R}{\delta \sqrt{\log j}}, 
    \quad \quad &\text{if} \quad \delta \gg (\log j)^{-1/2}; \\
    \int_{-\pi}^{\pi} \frac{j^{\operatorname{Re}(R e^{i \varphi})}}{|q - R e^{i \varphi}|} \, d \varphi &\ll 
    j^R \log \left( \frac{1}{\delta \sqrt{\log j}} \right), 
    \quad \quad &\text{if} \quad \delta \ll (\log j)^{-1/2}.
    \end{align*}
\end{lemma}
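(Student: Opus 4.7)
The plan is to bound the integrand above by a Gaussian-like factor divided by $\max(\delta, |\varphi|)$, and then split the range of integration. First I would use $\cos\varphi \le 1 - c_0\varphi^2$ on $[-\pi,\pi]$ (with, say, $c_0 = 2/\pi^2$) to obtain
\[
j^{\operatorname{Re}(Re^{i\varphi})} = j^{R\cos\varphi} \le j^R\exp\bigl(-c_0 R\,\varphi^2\log j\bigr),
\]
which is a Gaussian of width $\sigma \asymp (\log j)^{-1/2}$ (since $R \asymp 1$) centred at $\varphi = 0$ with peak $j^R$.

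Next I would bound the denominator from below. Expanding gives
\[
|q - Re^{i\varphi}|^2 = (q-R)^2 + 2qR\,(1-\cos\varphi) \ge \delta^2 + c_1\varphi^2,
\]
using $1 - \cos\varphi \ge \varphi^2/4$ on $[-\pi,\pi]$ together with $R > 1/5$ and $q \ge z > 0$. If $q$ is very small one has $\delta \ge R - q \ge 1/10$ (say), so the claimed bounds are immediate; otherwise $qR$ is bounded below and we get $|q - Re^{i\varphi}| \gg \max(\delta, |\varphi|)$. Combining these two estimates reduces the problem to showing
\[
\int_{-\pi}^{\pi} \frac{\exp(-c_0R\,\varphi^2\log j)}{\max(\delta,|\varphi|)}\,d\varphi \;\ll\; \begin{cases}\sigma/\delta,& \delta \gg \sigma,\\ \log\bigl(1/(\delta\sqrt{\log j})\bigr),& \delta \ll \sigma.\end{cases}
\]

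For $\delta \gg \sigma$ I would split at $|\varphi| = \delta$. On $|\varphi|\le\delta$ the denominator is $\asymp\delta$, so the integral is $\le(1/\delta)\int_{\mathbb{R}}e^{-c_0R\varphi^2\log j}\,d\varphi \ll \sigma/\delta$. On $\delta \le |\varphi|\le\pi$ we use $1/|\varphi| \le 1/\delta$, and the remaining Gaussian again integrates to $O(\sigma)$, giving $\ll \sigma/\delta$. For $\delta \ll \sigma$ I would instead split at $\delta$ and $\sigma$. The piece $|\varphi|\le\delta$ contributes $O(1)$. On $\delta\le|\varphi|\le\sigma$ the Gaussian is $\le 1$ and $\int_\delta^\sigma d\varphi/\varphi = \log(\sigma/\delta)$. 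On $\sigma\le|\varphi|\le\pi$, the substitution $u=\varphi/\sigma$ converts the remaining integral into $\int_1^{\pi/\sigma} e^{-c_0 R u^2}/u\,du = O(1)$. Summing yields $\log(\sigma/\delta)+O(1) \ll \log(1/(\delta\sqrt{\log j}))$, as required.

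The argument is essentially a careful three-piece split, and the main technical point is ensuring both the Gaussian upper bound on the numerator and the $\max(\delta,|\varphi|)$ lower bound on $|q-Re^{i\varphi}|$ hold uniformly for $R$ in the prescribed range; the potential degeneracy when $q$ is close to $0$ is handled by noting that it forces $\delta$ itself to be bounded below, making the target bound trivial. No step requires fresh machinery beyond these elementary estimates.
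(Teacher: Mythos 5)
Your proposal is correct and follows essentially the same route as the paper: a Gaussian upper bound $j^{R\cos\varphi}\le j^{R}e^{-cR\varphi^{2}\log j}$ for the numerator, the lower bound $|q-Re^{i\varphi}|\gg\max(\delta,|\varphi|)$ for the denominator, and a split of the integration range at $|\varphi|=\delta$ (and at $(\log j)^{-1/2}$), which is what the paper does after its normalization $b=q/R$. The only needed fix is cosmetic: $1-\cos\varphi\ge\varphi^{2}/4$ fails near $\varphi=\pm\pi$, so use $1-\cos\varphi\ge 2\varphi^{2}/\pi^{2}$ there (as you already did for the numerator); the resulting bound $|q-Re^{i\varphi}|^{2}\ge\delta^{2}+c_{1}\varphi^{2}$ with some $c_{1}>0$, and hence the whole argument, is unaffected.
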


\begin{proof}
Denote $b \vcentcolon = q / R$, $B = j^R > 3$.
\[
\int_{-\pi}^{\pi} \frac{j^{\operatorname{Re}(R e^{i \varphi})}}{|q - R e^{i \varphi}|} \, d \varphi = 
R^{-1} \int_{-\pi}^{\pi} \frac{j^{R \cos \varphi}}{|b - e^{i \varphi}|} \, d \varphi.
\]

Suppose that $\delta \gg 1$, then
\[
\int_{-\pi}^{\pi} \frac{j^{R \cos \varphi}}{|b - e^{i \varphi}|} \, d \varphi \ll \int_{0}^{\pi} B^{\cos \varphi} \, d \varphi \ll 
\int_{0}^{\pi/2} B^{\cos \varphi} \, d\varphi.
\]
Note that $\cos \varphi \ge 1 - \varphi^2 / 4$ on $[-\pi/2, \pi/2]$. Hence
\[
\int_{0}^{\pi/2} B^{\cos \varphi} \, d\varphi \ll B \int_{0}^{\pi / 2} B^{- \varphi^2 / 4} \, d \varphi \ll \frac{B}{\sqrt{\log B}} \int_{0}^{\pi \sqrt{\log B}/2} e^{-t^2 / 4} dt \ll \frac{B}{\sqrt{\log B}}.
\]

This proves the Lemma in case $\delta \gg 1$. Now suppose that $\delta < 1/100$.

We have $|b - e^{i \varphi}| \ge \max(\delta, |\sin \varphi|) \gg \max(\delta, |\varphi|)$. Thus
\[
\int_{-\pi}^{\pi} \frac{j^{R \cos \varphi}}{|b - e^{i \varphi}|} \, d \varphi \ll \frac{1}{\delta} \int_0^{\delta} B^{\cos \varphi} d \varphi + \int_{\delta}^{\pi/2} \frac{B^{\cos \varphi}}{\varphi} \, d \varphi.
\]

We have
\[
\frac{1}{\delta} \int_0^{\delta} B^{\cos \varphi} d \varphi \ll
\frac{B}{\delta} \int_{0}^{\delta} B^{-\varphi^2 / 4} d \varphi \ll \frac{B}{\delta \sqrt{\log B}} \int_0^{\delta \sqrt{\log B}} e^{-t^2 / 4} \, dt \ll
\frac{B \min(1, \delta \sqrt{\log B})}{\delta \sqrt{\log B}}.
\]
\[
\int_{\delta}^{\pi/2} \frac{B^{\cos \varphi}}{\varphi} \, d \varphi \ll 
B \int_{\delta}^{\pi/2} \frac{B^{-\varphi^2 / 4}}{\varphi} \, d \varphi \ll B \int_{\delta \sqrt{\log B}}^{\pi \sqrt{\log B}/2} \frac{e^{-t^2/4}}{t} \, dt.
\]
If $\delta \gg (\log j)^{-1/2}$, then
\[
B \int_{\delta \sqrt{\log B}}^{\pi \sqrt{\log B}/2} \frac{e^{-t^2/4}}{t} \, dt \ll 
B \frac{e^{-(\delta \sqrt{\log B})^2}}{\delta \sqrt{\log B}} \ll \frac{B}{\delta \sqrt{\log B}}.
\]
and if $\delta \ll (\log j)^{-1/2}$, then
\[
B \int_{\delta \sqrt{\log B}}^{\pi \sqrt{\log B}/2} \frac{e^{-t^2/4}}{t} \, dt \ll 
B \left(1 + \int_{\delta \sqrt{\log B}}^1 \frac{dt}{t}\right) \ll
B \log \left( \frac{1}{\delta \sqrt{\log B}} \right).
\]

Putting all things together we obtain the result.
\end{proof}

\begin{proof}[Proof of Theorem \ref{gamma_k(z) main thm}]

Denote $a \vcentcolon = (k - j) / \log k$.

Suppose that $j \le k - 2.5 \log k$. We take $R \in [2.5, 2.501]$, in such way that $\delta = \min(|R - z|, |R - z + 1|, |R - z + 2|)  \ge 10^{-4}$. Then
\[
R_{k, j} \ll 2^{-j} k^{2.501 - z} (2/5)^{k - j} \ll 2^{-k} k^{2.501 - z + a (\log 2 - \log(5/2))}.
\]
This implies
\[
\sum_{0 \le j \le k - 2.5 \log k} |R_{k, j}| \ll 
2^{-k} (\log k) k^{2.501 + 2.5 (\log 2 - \log(5/2)) - z} \ll 2^{-k} k^{2 - z}.
\]

If $a \le 1 - \frac{1}{\sqrt{\log k}}$, let us take $R \in [1, 1 - (\log k)^{-1/2}]$, so that $\delta \gg (\log k)^{-1/2}$. Lemma \ref{int bound j^R} gives us
\[
R_{k, j} \ll 2^{-j} k^{R - z} R^{j - k} \ll
2^{-k} k^{R - z + a (\log 2 - \log R)} \ll
2^{-k} k^{1 + \log 2 - z}.
\]
This implies
\[
\sum_{k - \log k \le j \le k} |R_{k, j}| \ll 2^{-k} k^{2 - z}.
\]

In other cases let $q$ be the closest number to $a$ of the form $z, z + 1$ or $z + 2$.

If $|a - q| \ge (\log k)^{-1/2}$, then we take $R = a$. Otherwise, if $a - q \ge 0$, let us take $R = a + (\log k)^{-1/2}$ and if $a - q < 0$ we take $R = a - (\log k)^{-1/2}$. In all cases $\delta \sqrt{\log k} \gg 1$.

Denote $h(a) \vcentcolon = a + a(\log 2 - \log a)$. 
Using Lemma \ref{int bound j^R} we obtain
\[
R_{k, j} \ll 2^{-k} \frac{k^{R - z + a (\log 2 - \log R)} }{\max(1, \delta \sqrt{\log k})} \ll 2^{-k} \frac{k^{h(a) - z}}{\max(1, \delta \sqrt{\log k})}.
\]

We have
\[
\gamma_k(z) = \frac{z^k}{\Gamma(z)} \sum_{j = 0}^{k} M_{k, j} + O\left(z^{k + 1} \sum_{j = 0}^{k} (|R_{k, j}| + |I_{k, j} - I_{k, j}'|) \right).
\]

If $z \in [\varepsilon, 1 - \varepsilon] \cup [1 + \varepsilon, 2 - \varepsilon]$, then
\[
z^{k+1} \sum_{a : |a - q| < \varepsilon/2} |R_{k, j}| \ll (z/2)^{k+1} (\log k) k^{\max(h(q + \varepsilon/2), h(q - \varepsilon/2)) - z} \ll (z/2)^{k+1} k^{2 - z}.
\]
If $|a - q| \ge \varepsilon / 2$, then $\delta \sqrt{\log k} \gg \sqrt{\log k}$.

Note that $h(2 + t) \le 2 - \frac{(t-2)^2}{8}$ on $t \in [0, 3]$.

Hence for $z \in [\varepsilon, 1 - \varepsilon] \cup [1 + \varepsilon, 2 - \varepsilon]$ we have
\[
z^{k + 1} \sum_{j = 0}^{k} |R_{k, j}| \ll
(z/2)^{k+1} k^{2 - z} \left( 1 + \sum_{n = 1}^{\infty} k^{-\frac{(n / \sqrt{\log k})^2}{8}} \right) \ll (z/2)^{k+1} k^{2 - z}.
\]

If $z = 1$ or $z = 2$, then
\[
z^{k + 1} \sum_{j = 0}^{k} |R_{k, j}| \ll 
(z/2)^{k+1} k^{2 - z} \sqrt{\log k} \left( 1 + \sum_{n = 1}^{\infty} \frac{k^{-\frac{(n / \sqrt{\log k})^2}{8}}}{n} \right) \ll 
(z/2)^{k+1} k^{2 - z} \sqrt{\log k}.
\]

If $z \ge 1 + \varepsilon$, then Lemma \ref{I_k = I_k'} gives us
\[
z^{k + 1} \sum_{j = 0}^k |I_{k, j} - I_{k, j}'| \ll (z/2)^{k}.
\]
And for $z = 1$ Lemma \ref{I_k = I_k'} gives us
\[
z^{k + 1} \sum_{j = 0}^k |I_{k, j} - I_{k, j}'| \ll (z/ 2)^k k.
\]

Note that for $1 + \varepsilon \le z \le 2$ the contribution of the residue at $z + 1$ is small compared to the error term and hence
\[
\gamma_k(z) = \frac{z^k}{\Gamma(z)} \left( \sum_{j = 0}^{k - z \log k} 2^{-j} \frac{G_2(1,z)}{z^{k - j + 1}} + 
O((\log k) (2/z)^{1.2 \log k}) \right) \, + O((z/2)^{k + 1} k^{2 - z} A_z).
\]

where $A_z = \sqrt{\log k}$ for $z \in \mathbb{Z}$ and $A_z = 1$ otherwise. 

Note that for $z < 2 - \varepsilon$
\[
z^{k + 1} \sum_{j = k - z \log k}^{\infty} \frac{2^{-j}}{z^{k - j + 1}} \ll_{\varepsilon} \left( \frac{z}{2} \right)^{k} \left( \frac{2}{z} \right)^{z \log k} \ll \left( \frac{z}{2} \right)^{k} k^{z \log(2/z)} \ll \left( \frac{z}{2} \right)^{k} k^{2 - z}.
\]

Hence for $1 + \varepsilon < z < 2 - \varepsilon$
\[
\gamma_{k}(z) = \frac{G_{2}(1, z)\left(1 - \frac{z}{2} \right)^{-1}}{\Gamma(z + 1)} + O((z/2)^{k} k^{2 - z}).
\]

And for $z = 2$
\[
\gamma_k(2) = \frac{1}{2} \sum_{j = 0}^{k - 2 \log k} G_2(1, 2) + O\left( \sqrt{\log k} \right) = \frac{G_2(1, 2)}{2} \left( k - 2 \log k + O(\sqrt{\log k}) \right).
\]

For $z = 1$ the residue at $z + 1$ now is greater than the error term. We get
\[
\gamma_k(1) = \frac{G_{2}(1, 1)\left(1 - \frac{1}{2} \right)^{-1}}{\Gamma(2)} - \frac{z^k}{\Gamma(z)} 
\left( \sum_{j = 0}^{k - 2\log k} (j \log 2) 2^{-j} \frac{G_2(1,2)}{2^{k - j + 1}}\right) + O((1/2)^{k} k \sqrt{\log k}).
\]

Thus
\[
\gamma_k(1) = 1 - 2^{-k} \frac{G_2(1, 2) (\log 2)}{4} (k^2 - 4k \log k + O(k \sqrt{\log k})).
\]

For $\varepsilon < z < 1 - \varepsilon$ we will use $I_{k, j}' + I_{k, j}''$ as an approximation to $I_{k, j}$. In the similar way we obtain
\[
I_{k, j}'' = \frac{1}{2\pi i} \oint_{|w| = \epsilon} \frac{G_2^{(1, 0)}(1, w)}{w^{k - j + 1}}
h_{j, z+1}(w) \, dw.
\]

Using the same analysis we get
\[
\frac{z^k}{\Gamma(z)} \sum_{j = 0}^{k} I_{k, j}'' = 
\frac{(z/2)^k}{\Gamma(z)} G_2^{(1, 0)}(1, z + 1)  \left( \sum_{j = 0}^{\infty} \frac{2^{k-j}}{(z + 1)^{k - j + 1}} + O\left( \left(\frac{2}{z + 1} \right)^{(z + 1) \log k} \right) \right) + 
\]
\[
O((z/2)^{k+1} k^{1 - z} \sqrt{\log k}) =
G_2^{(1, 0)}(1, z + 1) \left( \frac{z}{z+1} \right)^k \frac{\left(1 - \frac{z+1}{2} \right)^{-1}}{\Gamma(z)(z+1)}
+  O((z/2)^{k+1} k^{1 - z} \sqrt{\log k}).
\]

And
\begin{multline*}
\frac{z^k}{\Gamma(z)} \sum_{j = 0}^{k} I_{k, j}' = 
\frac{G_{2}(1, z)\left(1 - \frac{z}{2} \right)^{-1}}{\Gamma(z + 1)} + O((z/2)^{k} k^{2 - z}) \, - \\
\frac{(z/2)^k}{\Gamma(z)(z+1)} G_2(1, z+1) (\log 2) \left( \sum_{j = 0}^{k - (z+1) \log k} \left( \frac{2}{z+1} \right)^{k - j} j \right).
\end{multline*}

Denote $r = k - j$, then
\[
\sum_{r = 0}^{(z+1) (\log k)} \left( \frac{2}{z+1} \right)^{k - j} j \ll k^{(z + 1) \log(2/(z+1)) + 1} \ll k^{2 - z}.
\]
And for $\varepsilon < z < 1 - \varepsilon$
\[
\sum_{j = 0}^{k} \left( \frac{2}{z+1} \right)^{k - j} j =
\frac{\left( \frac{2}{z+1} \right)^{k+1}}{\left( \frac{2}{z+1} - 1 \right)^2} + O(k).
\]

Putting all things together we obtain for $\varepsilon < z < 1 - \varepsilon$
\begin{multline*}
\gamma_k(z) = 
\frac{G_{2}(1, z)\left(1 - \frac{z}{2} \right)^{-1}}{\Gamma(z + 1)} + O((z/2)^k k^{2 - z}) + \\
\left( \frac{z}{z+1} \right)^{k} \frac{2}{\Gamma(z) (1 - z^2)} \left( G_2^{(1, 0)}(1, z + 1) - (\log 2) \left( \frac{1 + z}{1 - z}\right) G_2(1, z + 1) \right).
\end{multline*}

Recall the equality (\ref{G_y(1, w)}) and also note that
\[
G_2^{(1, 0)}(1, w) = G_2(1, w) \left( w \log 2 + \sum_{p > 2} \left( \frac{w \log p}{p - 1} - \frac{w \log p}{p - w} \right) + w \gamma \right).
\]
Hence for $\varepsilon < z < 1 - \varepsilon$
\[
\gamma_k(z) = G(z) + \left( \frac{z}{z + 1} \right)^k \frac{2 G_2(1, z + 1)}{\Gamma(z) (1 - z)}\left(\gamma -\frac{z \log 2 }{1 - z} - \sum_{p > 2} \frac{z \log p}{(p - 1) (p - z - 1)} \right) + O \left( \left( \frac{z}{2} \right)^k k^{2 - z} \right).
\]

The result follows.

The uniform approximation follows from the same analysis. 
\end{proof}

\subsection{Proof of Theorem \ref{tightness}}
\begin{proof}[\unskip\nopunct]
Let $p_1, p_2, \ldots$ be the odd prime numbers in increasing order. Let $C_i$ be a monotonically increasing sequence to be specified later. Denote $P_i \vcentcolon = \prod_{j \le i} p_j$. We take
\[
A_i \vcentcolon = \{ p_i b \, : \, (b, P_i) = 1, \Omega(b) = C_i \}, \qquad A = \bigcup_{i = 1}^{\infty} A_i.
\]
First let us prove that $A$ is primitive. Suppose that $a \in A_i, a' \in A_j$ and $a | a'$, $a \ne a'$.  If $j > i$,  then $(p_i, a') = 1$ which gives a contradiction, since $p_i | a$. Hence $j \le i$. This implies that $\Omega(a') \le \Omega(a)$, which again gives a contradiction with $a | a'$, $a \ne a'$. Thus $A$ is primitive.

Now we choose $C_i \,\, (i \ge 1)$, so that three conditions are satisfied:
\begin{flalign*}
&\text{1) } \text{For } z \in (0, 2 - 2^{-n}] \text{ we have } 
\qquad \qquad \sum_{a \in A_i} \frac{\psi(a) z^{\Omega(a)}}{a (\log a)^z} \ge 2^{i}, \\
&\text{2) } f_2(A_i) \ge 2^i, &&\\
&\text{3) } \text{For } z \in [2 + 2^{-n}, 2^n] \text{ we have } f_z(A_i) \ge 2^{i}. &&
\end{flalign*}

Obviously if we could choose such increasing sequence $C_i$, then $A$ will satisfy all conditions of Theorem \ref{tightness}, since $A_i$ are disjoint. Thus it is enough to prove that for fixed $i$ conditions 1), 2), 3) are satisfied if $C_i$ is sufficiently large. 

Using inclusion-exclusion principle and Theorem \ref{gamma_k(z) main thm} we obtain that as $C_i$ tends to infinity
\begin{equation} \label{f_z(A_i) sim}
f_z(A_i) \sim 
\begin{cases}
G(z) \frac{z}{p_i} \prod_{j \le i} \left( 1 - \frac{z}{p_j} \right), \quad &\text{uniformly in } z \in (0, 2 - 2^{-i}] \\
\frac{d_2}{2} C_i \frac{2}{p_i} \prod_{j \le i} \left( 1 - \frac{2}{p_j} \right), \quad &\text{if } z = 2.
\end{cases}
\end{equation}
Hence for all sufficiently large $C_i$ we have $f_2(A_i) > 2^{i}$ and condition 2) is satisfied. 

Let 
\[
m_i \vcentcolon = \inf_{z \in (0, 2 - 2^{-i}]} G(z) \frac{z}{p_i} \prod_{j \le i} \left( 1 - \frac{z}{p_j} \right).
\]
Equation (\ref{f_z(A_i) sim}) implies that there exist $N_1$ such that for all $C_i \ge N_1$ it is $f_z(A_i) > m_i / 2$ for all $z \in (0, 2 - 2^{-i}]$.

There exist $N_2$ such that for each $n \ge N_2$ we have $\psi(n) > 2^{i + 1} m_i^{-1}$. Hence for $C_i > \max(N_1, N_2)$ condition 1) is satisfied.

Finally we note that $2^{C_i} p_i \in A_i$ and
\[
\lim_{C_i \to \infty} f_z(2^{C_i} p_i) = \infty, \quad \text{uniformly in } \, z \in [2 + 2^{-n}, 2^n].
\]
Hence condition 3) is satisfied for $C_i$ sufficiently large. 

\end{proof}

\subsection{Proof of Proposition \ref{H-L Tauber}}
\begin{proof}[\unskip\nopunct]
Let
\[
S(z) \vcentcolon =  \sum_{n = 2}^{\infty} \frac{h(\Omega)}{n (\log n)^z} = 
\sum_{k = 1}^{\infty} h(k) \gamma_k(z) z^{-k}.
\]
The last equality holds since $\sum_{n = 2}^{\infty} \frac{h(\Omega(n))}{n (\log n)^z}$ is absolutely convergent. Since $\gamma_k(z) \sim G(z)$ uniformly in $z \in [1, 1.5]$, we have $\sum_{k \le x} h(k) \gamma_k(z) \sim c \, G(z) x$ uniformly in $z \in [1, 1.5]$. Integrating by parts we obtain that as $z$ tends to $1+$
\[
S(z) \sim c \, G(z) z^{-1} \left(1 - \frac{1}{\log z^{-1}} \right).
\]
Since $\lim_{z \to 1+} G(z) = 1$, it follows that
\[
\lim_{z \to 1+} (z - 1)S(z) = c.
\]
\end{proof}

\section{$f_z(A)$ upper bounds: Theorem \ref{U(z) main}}
In this section we follow \cite{Lichtman} to derive upper bounds on $f_z(A)$.
By $L$-primitive set in this section we assume an $L$-primitive set with respect to increasing order. 

First let us introduce some notation.

For an $L$-primitive set $A$ we denote $L_A \vcentcolon = \bigcup_{a \in A} L_a$, where $L_a = \{ ab \in \mathbb{N} : P(a) \le p(b) \,\, \text{or} \,\,  b = 1\}$. Note that by Lemma \ref{L_a intersection} this is a disjoint union. 

Let us denote 
\begin{equation} \label{d_z(L_A) definition}
d_z(L_a) \vcentcolon = \frac{z^{\Omega(a)}}{a} \prod_{p < P(a)} \left(1 - \frac{z}{p} \right), \quad \quad
d_z(L_A) \vcentcolon = \sum_{a \in A} d_z(L_a).
\end{equation}

Theorem \ref{main} implies that $d_z(L_A) \le 1$.

\begin{lemma} \label{trivial1}
Assume that $A, B$ are finite $L$-primitive sets and $A \subset L_B$, then
\[
d_z(L_B) \ge d_z(L_A).
\]
\end{lemma}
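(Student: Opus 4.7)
The plan is to exploit the disjoint decomposition $L_B = \bigsqcup_{b \in B} L_b$, which follows from Lemma \ref{L_a intersection} together with the $L$-primitivity of $B$. This lets me partition $A = \bigsqcup_{b \in B} A_b$ where $A_b := A \cap L_b$, so that $d_z(L_A) = \sum_{b \in B} \sum_{a \in A_b} d_z(L_a)$. The whole lemma then reduces to proving the local inequality $\sum_{a \in A_b} d_z(L_a) \le d_z(L_b)$ for each individual $b \in B$ and summing over $b$.

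Now fix $b$. I would write every $a \in A_b$ uniquely as $a = b c_a$ with either $c_a = 1$ or $p(c_a) \ge P(b)$, and set $C_b := \{c_a : a \in A_b\}$. The degenerate case $c_a = 1$ can only occur when $b \in A$, and there $L$-primitivity of $A$ forces $A_b = \{b\}$, so the local inequality holds trivially; hence I may assume $1 \notin C_b$. A short computation shows that $C_b$ inherits $L$-primitivity from $A$: if $c' = c d$ with $P(c) \le p(d)$ for distinct $c, c' \in C_b$, then $bc' = (bc) \cdot d$ with $P(bc) = P(c) \le p(d)$, giving $bc' \in L_{bc}$ and contradicting the $L$-primitivity of $A$.

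The key step is then to apply Theorem \ref{main} to the finite $L$-primitive set $C_b$ with the completely multiplicative function $f$ defined by $f(p) := z/p$ for $p \ge P(b)$ and $f(p) := 0$ otherwise. This cut-off is engineered so that
\[
f(c) \prod_{p < P(c)} \bigl( 1 - f(p) \bigr) = \frac{z^{\Omega(c)}}{c} \prod_{P(b) \le p < P(c)} \left( 1 - \frac{z}{p} \right)
\]
for every $c \in C_b$ (whose prime factors all satisfy $p \ge P(b)$). Theorem \ref{main} yields that the sum of the left-hand side over $c \in C_b$ is at most $1$, and multiplying through by $d_z(L_b) = (z^{\Omega(b)}/b) \prod_{p < P(b)}(1 - z/p)$ gives exactly the desired local inequality. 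The only real obstacle is this bookkeeping: one must arrange $f$ so its cut-off at $P(b)$ telescopes correctly with $\prod_{p < P(b)}(1 - z/p)$, and check the inheritance of $L$-primitivity to $C_b$; once these are in hand, the whole argument collapses to a single invocation of Theorem \ref{main}.
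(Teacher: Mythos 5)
Your proof is correct, but it takes a genuinely different route from the paper. The paper does not invoke Theorem \ref{main} as a black box; it reruns the counting argument from that theorem's proof: it fixes one completely multiplicative function $g$ with $g(p)=z/p$ on the primes dividing $\prod_{m\in A\cup B}m!$ and $g(p)=0$ otherwise, observes that $\bigl(\sum_m g(m)\bigr)d_z(L_B)=\sum_{b\in B}\sum_{m\in L_b}g(m)\ge\sum_{a\in A}\sum_{m\in L_a}g(m)=\bigl(\sum_m g(m)\bigr)d_z(L_A)$ (using disjointness of the $L_a$ and the containment of $\bigcup_{a\in A}L_a$ in $L_B$), and divides by $\sum_m g(m)$. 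You instead localize: you split $A=\bigsqcup_{b\in B}A_b$ along the disjoint decomposition of $L_B$, reduce to the local bound $\sum_{a\in A_b}d_z(L_a)\le d_z(L_b)$, pass to the cofactor set $C_b=\{a/b: a\in A_b\}$, check it inherits $L$-primitivity (and handle the degenerate case $b\in A$), and then apply Theorem \ref{main} with the truncated weight $f(p)=z/p$ for $p\ge P(b)$ and $f(p)=0$ below, finally rescaling by $d_z(L_b)$. Your version buys modularity -- the lemma becomes a formal consequence of the stated Theorem \ref{main} plus the multiplicative identity $d_z(L_{bc})=d_z(L_b)\cdot\frac{z^{\Omega(c)}}{c}\prod_{P(b)\le p<P(c)}(1-z/p)$ -- at the cost of the bookkeeping you flag (bijection $a\leftrightarrow c_a$, inheritance of $L$-primitivity, the $c_a=1$ case); the paper's version avoids all case analysis by reusing the global measure comparison already set up in the proof of Theorem \ref{main}, which is shorter given that that proof is on the page. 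Both arguments, like the paper's, implicitly require $z\le 2$ so that the factors $1-z/p$ are nonnegative, so there is no loss there.
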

\begin{proof}
    Let us take $M = \prod_{m \in A \cup B} m!$ and let $g$ be a completely multiplicative function such that 
    \[
    g(p) = 
    \begin{cases}
    z/p, & \text{if} \,\,\, p | M, \\
    0, & \text{otherwise}.  
    \end{cases}
    \]
    As in the proof of Theorem \ref{main} we have
    \[
    \left( \sum_{m \in \mathbb{N}} g(m) \right) d_z(L_B) = \sum_{b \in B} \sum_{m \in L_b} g(m) \ge \sum_{a \in A} \sum_{m \in L_a} g(m) = \left( \sum_{m \in \mathbb{N}} g(m) \right) d_z(L_A).
    \]
    And the desired inequality follows.
\end{proof}

\begin{rem}
We can view $d_z(L_a)$ as a density of $L_a$ in the following sense. More generally for $M \subset \mathbb{N}$ we can set
\[
d_z(M) \vcentcolon = \lim_{x \to \infty} \frac{\sum_{n \in M \cap [1, x]} z^{\Omega(n)}}{\sum_{n \le x} z^{\Omega(n)}}.
\]
This gives another approach to prove Lemma \ref{trivial1} and Theorem \ref{main} for the case $f(n) = z^{\Omega(n)}/n$ and primes in increasing order.
\end{rem}

Let us denote
\begin{equation} \label{C_z}
C_z \vcentcolon = \prod_{p} \left(1 - \frac{z}{p} \right)^{-1} \left(1 - \frac{1}{p} \right)^{z}.
\end{equation}

Let 
\[
\mu_x(z) \vcentcolon = \left( e^{\gamma} (\log x) \prod_{p < x} \left(1 - \frac{1}{p} \right) \right)^z \prod_{p \ge x} \left(1 - \frac{z}{p} \right)^{-1} \left(1 - \frac{1}{p} \right)^z = e^{\gamma z} C_z (\log x)^{z} \prod_{p < x} \left( 1 - \frac{z}{p} \right).
\]

Mertens' third theorem implies that $\mu_x(z) \sim 1$ as $x$ tends to infinity uniformly in $z \in [0, 2]$.

For $q \in \mathbb{P}$ and $x \in \mathbb{R}$ we define
\[
m_q(z) \vcentcolon = \inf_{\substack{p \ge q \\ p \in \mathbb{P}}} \mu_p(z), \quad \quad 
M_x(z) \vcentcolon = \sup_{\substack{y \ge x \\ y \in \mathbb{R}}} \mu_y(z), \quad \quad r_q(z) \vcentcolon = \sup_{\substack{p \ge q \\ p \in \mathbb{P}}} \frac{M_p(z)}{\mu_p(z)}.
\]
Obviously $r_q(z) \le M_q(z) / m_q(z)$. Also note that for a prime $q$
\[
M_q(z) = \sup_{\substack{p \ge q \\ p \in \mathbb{P}}} \mu_p(z).
\]

For a prime number $q$ we have
\[
f_z(q) = \frac{z}{q (\log q)^z} = \frac{z}{q} \frac{e^{\gamma z} C_z}{\mu_q(z)} \prod_{p < q} \left( 1 - \frac{z}{p} \right) = \frac{e^{\gamma z} C_z}{\mu_q(z)} d_z(L_q).
\]

Denote $A_n \vcentcolon = A \cap L_n$.

\begin{lemma} \label{L1bound}
    Let $A$ be an $L$-primitive set. Take $\nu \ge 0$, an integer $n \notin A$ and denote $q = P(n)$. If $P(a)^{1 + \nu} \le a$ for all $a \in A_n$, then
    \[
    f_z(A_n) \le \frac{e^{\gamma z} C_z}{m_q(z)} \frac{d_z(L_{A_n})}{(1 + \nu)^z},
    \]
    where $C_z$ is defined by (\ref{C_z}).
\end{lemma}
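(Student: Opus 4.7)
The proof is essentially a matter of unwinding the definitions and plugging in the hypotheses; no deep work is required beyond recognizing that the ratio $f_z(a)/d_z(L_a)$ is precisely the quantity controlled by $\mu_{P(a)}(z)$.

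First I would observe that from the definitions of $f_z$ and $d_z(L_a)$ one has the identity
\[
\frac{f_z(a)}{d_z(L_a)} \;=\; \frac{1}{(\log a)^z \prod_{p < P(a)}(1 - z/p)},
\]
valid for every $a \in A_n$ (noting $a > 1$ so $P(a)$ is a prime). The hypothesis $P(a)^{1+\nu} \le a$ gives $(\log a)^z \ge (1+\nu)^z (\log P(a))^z$, so
\[
f_z(a) \;\le\; \frac{d_z(L_a)}{(1+\nu)^z \,(\log P(a))^z \prod_{p < P(a)}(1 - z/p)}.
\]

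The key recognition is that the denominator is essentially $\mu_{P(a)}(z)$: by the definition of $\mu_x(z)$ applied at the prime $x = P(a)$,
\[
(\log P(a))^z \prod_{p < P(a)}\!\left(1 - \tfrac{z}{p}\right) \;=\; \frac{\mu_{P(a)}(z)}{e^{\gamma z} C_z}.
\]
Hence
\[
f_z(a) \;\le\; \frac{e^{\gamma z} C_z}{(1+\nu)^z\, \mu_{P(a)}(z)}\; d_z(L_a).
\]

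Next, since $a \in A_n = A \cap L_n$, we have $a = n b$ with $P(n) \le p(b)$, and therefore $P(a) \ge P(n) = q$. Taking the infimum defining $m_q(z)$ over primes $p \ge q$, we obtain $\mu_{P(a)}(z) \ge m_q(z)$, so
\[
f_z(a) \;\le\; \frac{e^{\gamma z} C_z}{(1+\nu)^z\, m_q(z)}\; d_z(L_a).
\]
Summing this inequality over $a \in A_n$ yields $f_z(A_n) \le \frac{e^{\gamma z} C_z}{m_q(z)}\frac{d_z(L_{A_n})}{(1+\nu)^z}$, as required. There is no real obstacle here—the only thing to double-check is that $P(a) \ge q$ for every $a \in A_n$, which follows immediately from the structure of $L_n$; the hypothesis $n \notin A$ is not used in this inequality itself but is presumably reserved for applications where one wishes to treat $A_n$ as a proper cover.
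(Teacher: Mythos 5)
Your proof is correct and follows essentially the same route as the paper: bound $(\log a)^z \ge (1+\nu)^z(\log P(a))^z$ from the hypothesis, rewrite $(\log P(a))^z\prod_{p<P(a)}(1-z/p)$ as $\mu_{P(a)}(z)/(e^{\gamma z}C_z)$, use $P(a)\ge q$ to replace $\mu_{P(a)}(z)$ by $m_q(z)$, and sum over $a\in A_n$. Your side remark is also accurate — the hypothesis $n\notin A$ plays no role in this particular estimate.
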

\begin{proof}
    $P(a)^{1 + \nu} \le a$ implies
    \[
    f_z(a) = \frac{z^{\Omega(a)}}{a (\log a)^z} \le 
    \frac{z^{\Omega(a)}(1 + \nu)^{-z} }{a (\log P(a))^z} = 
    \frac{e^{\gamma z} C_z}{\mu_{P(a)}(z)} \frac{z^{\Omega(a)}}{a (1+\nu)^z} \prod_{p < P(a)} \left(1 - \frac{z}{p} \right) = \frac{e^{\gamma z} C_z}{\mu_{P(a)}(z)} \frac{d_z(L_a)}{(1 + \nu)^z}.
    \]
    We have $\mu_{P(a)}(z) \ge m_{P(a)}(z) \ge m_q(z)$. Hence
    \[
    f_z(A_n) = \sum_{a \in A_n} f_z(a) \le \sum_{a \in A_n} \frac{e^{\gamma z} C_z}{m_q(z)} \frac{d_z(L_a)}{(1 + \nu)^z} = \frac{e^{\gamma z} C_z}{m_q(z)} \frac{d_z(L_{A_n})}{(1 + \nu)^z}.
    \]
\end{proof}

Denote $a^* \vcentcolon = a / P(a)$ and
\[
C_a^{\nu} \vcentcolon = \{c \in \mathbb{N} : [p(c), P(c)] \subset [P(a^*), P(a^*)^{1/ \sqrt{\nu}}) \}.
\]
\begin{lemma}[Lichtman] \label{Lichtman lemma}
Let $A$ be a primitive set of composite numbers and $\nu \in (0, 1)$. If $P(a)^{1 + \nu} > a$ for all $a \in A$, then the sets $L_{ac}$ ranging over $a \in A, c \in C_a^{\nu}$ are pairwise disjoint. In particular the set
$\{a c : a \in A, c \in C_{a}^{\nu} \}$ is
$L$-primitive.
\end{lemma}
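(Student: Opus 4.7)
I argue by contradiction. Suppose $L_{a_1 c_1} \cap L_{a_2 c_2} \ne \varnothing$ with $(a_1, c_1) \ne (a_2, c_2)$, $a_i \in A$, $c_i \in C_{a_i}^{\nu}$. The plan is to derive that $a_1, a_2$ are comparable under division, contradicting primitivity of $A$.

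First I unpack the hypothesis. Writing $a = P(a) a^*$, the bound $a < P(a)^{1+\nu}$ reads
\[
P(a^*) \le a^* < P(a)^{\nu},
\]
so $P(a)$ has multiplicity exactly one in $a$ and $\gcd(a^*, P(a)) = 1$. Using $\nu < 1$, for any $c \in C_a^{\nu}$ we get $P(c) < P(a^*)^{1/\sqrt{\nu}} < P(a^*)^{1/\nu} < P(a)$; in particular $P(ac) = P(a)$ and all primes of $a^* c$ sit strictly below $P(a)$.

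By Lemma \ref{L_a intersection} we may assume $a_1 c_1 \in L_{a_2 c_2}$, i.e.\ $a_1 c_1 = a_2 c_2 b$ for some integer $b \ge 1$ with $p(b) \ge P(a_2 c_2) = P(a_2)$; comparing largest primes on both sides yields $P(a_2) \le P(a_1)$. The central claim is $a_2 \mid a_1$. Granting it, primitivity of $A$ forces $a_1 = a_2 =: a$, the identity collapses to $c_1 = c_2 b$, and $p(b) \ge P(a) > P(c_1)$ forces $b = 1$ and $c_1 = c_2$, contradicting distinctness.

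To show $a_2 \mid a_1$, set $d = \gcd(a_2, c_1)$ and write $a_2 = d\alpha$, $c_1 = d\gamma$ with $\gcd(\alpha, \gamma) = 1$; the equation $a_1 c_1 = a_2 c_2 b$ rearranges to $a_1 \gamma = \alpha c_2 b$, and coprimality gives $\alpha \mid a_1$, i.e.\ $a_2/d \mid a_1$. The task is therefore to prove $d = 1$. Assume for contradiction that some prime $p$ divides $\gcd(a_2, c_1)$; from $p \mid c_1$ we have $p \in [P(a_1^*), P(a_1^*)^{1/\sqrt{\nu}})$. If $p = P(a_2)$, the hypothesis applied to $a_2$ combined with $\nu < 1$ gives $P(a_2^*) < P(a_2)^{\nu} < P(a_1^*)^{\sqrt{\nu}} < P(a_1^*)$, so $\gcd(a_2^*, c_1) = 1$ and then $a_2^* c_2 \mid a_1$; pushing the restriction $p(b) \ge P(a_2)$ through the factorization $a_1 c_1 = a_2 c_2 b$ forces $a_1^* = a_2^* c_2$, whence $P(a_1^*) = \max(P(a_2^*), P(c_2))$, but both terms are strictly smaller than $P(a_1^*)$ by the preceding inequalities --- contradiction. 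If $p < P(a_2)$ then $P(a_2^*) \ge p \ge P(a_1^*)$, and an analogous scale-clash argument applied to the largest prime of $d$ and cascaded through the primes of $a_2^*$ lying in $[P(a_1^*), P(a_1^*)^{1/\sqrt{\nu}})$ produces the same kind of impossibility. The main obstacle is the bookkeeping of the scales $P(a_i^*),\, P(a_i^*)^{1/\sqrt{\nu}},\, P(a_i)$ for $i = 1, 2$ under the global constraint $P(a_2) \le P(a_1)$; the strict inequality $\nu < 1$ is essential, as it is precisely what produces the gap $P(a_i^*)^{1/\sqrt{\nu}} < P(a_i)$ that confines the support of $c_i$ well below $P(a_i)$ and drives each scale clash.
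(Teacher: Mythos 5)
The paper does not actually supply a proof of this lemma; it simply cites \cite[Lemma 3.1]{Lichtman}. So there is no in-paper argument to compare against, and your proposal has to be judged on its own. Your framing is reasonable: reduce via Lemma \ref{L_a intersection} to $a_1 c_1 = a_2 c_2 b$ with $p(b)\ge P(a_2)$, aim for $a_2\mid a_1$, then use primitivity of $A$ to get $a_1=a_2$, then kill $b$ by comparing $p(b)\ge P(a_1)$ with $P(c_1)<P(a_1)$. That skeleton is correct.

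The gap is in the central claim $\gcd(a_2,c_1)=1$. First, notice that even in the legitimate case $(a_1,c_1)=(a_2,c_2)$, this gcd need \emph{not} be $1$: both $a_1^*$ and $c_1$ can be divisible by the prime $P(a_1^*)$, so $\gcd(a_1,c_1)$ is a (possibly positive) power of $P(a_1^*)$. Hence $\gcd(a_2,c_1)=1$ cannot be derived from the geometry of the scales alone; it must be threaded through the assumption $(a_1,c_1)\ne(a_2,c_2)$ and through primitivity, and your case analysis does not do this carefully. Concretely, in Case~1 ($p=P(a_2)$) the step ``pushing $p(b)\ge P(a_2)$ through the factorization forces $a_1^*=a_2^*c_2$'' is not correct as stated: writing $a_1^*=a_2^*c_2\,e$ and cancelling, one gets $P(a_1)\,e\,c_1'=b$ with $c_1=P(a_2)c_1'$, and since $e\mid a_1^*$ its primes are $\le P(a_1^*)$ while $p(b)\ge P(a_2)\ge P(a_1^*)$; this kills $e$ only when $P(a_2)>P(a_1^*)$, but when $P(a_2)=P(a_1^*)$ the factor $e$ can be a power of $P(a_1^*)$ and your ``scale clash'' vanishes. (One can still finish --- note $P(a_1^*)\mid a_1^*$ forces $e>1$, whence $a_2\mid a_1$ and primitivity contradicts $P(a_2)<P(a_1)$ --- but this is an extra argument, and it is exactly the primitivity step your write-up omits.) Second, and more seriously, Case~2 ($p<P(a_2)$, so $p\mid a_2^*$ and $P(a_1^*)\le P(a_2^*)$) is not argued at all; you write that ``an analogous scale-clash argument... produces the same kind of impossibility'' and then concede ``the main obstacle is the bookkeeping of the scales.'' In that configuration the useful inequality $P(a_2^*)^{1/\sqrt\nu}<P(a_1^*)$ from Case~1 is reversed, so the Case~1 mechanism does not transfer and a genuinely different chain of comparisons (again ending in an $a_i\mid a_j$ and an appeal to primitivity) is needed. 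As written, the proposal is an incomplete sketch rather than a proof.
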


\begin{proof}
    See \cite[Lemma 3.1]{Lichtman}.
\end{proof}

\begin{lemma} \label{LA_n bound}
    Let $A$ be a finite primitive set. Take $\nu \in (0, 1)$, an integer $n > 1$ with $n \notin A$ and denote $q = P(n)$. If $P(a)^{1 + \nu} > a$ for all $a \in A_n$, then 
    \[
    d_z(L_{A_n}) \le \nu^{z/2} r_q(z) d_z(L_n).
    \]
\end{lemma}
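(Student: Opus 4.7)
The plan is to form the enlarged set $B \vcentcolon= \{ac : a \in A_n,\, c \in C_a^{\nu}\}$, which is $L$-primitive by Lemma \ref{Lichtman lemma}, and to sandwich $d_z(L_B)$ between $d_z(L_{A_n})$ (from below) and $d_z(L_n)$ (from above). First I would verify that $B \subset L_n$. For each $a \in A_n \subset L_n$ write $a = nm$ with $P(n) \le p(m)$, where $m > 1$ since $n \notin A$. Then $P(a) = P(m)$ and $a^* = nm/P(m)$; a quick split on whether $m$ is prime shows $P(a^*) \ge P(n) = q$. Hence every $c \in C_a^{\nu}$ has $p(c) \ge P(a^*) \ge q$, so $ac \in L_n$. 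Lemma \ref{trivial1} then gives $d_z(L_B) \le d_z(L_n)$.

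Next I would expand $d_z(L_B) = \sum_{a \in A_n} \sum_{c \in C_a^{\nu}} d_z(L_{ac})$. The assumption $P(a)^{1+\nu} > a$ is equivalent to $a^* < P(a)^{\nu}$, and hence $P(a^*)^{1/\sqrt{\nu}} < P(a)^{\sqrt{\nu}} < P(a)$ for $\nu \in (0,1)$. Consequently every $c \in C_a^{\nu}$ satisfies $P(c) < P(a)$, so $P(ac) = P(a)$, and the $c$-sum factors:
\[
\sum_{c \in C_a^{\nu}} d_z(L_{ac}) = d_z(L_a) \sum_{c \in C_a^{\nu}} \frac{z^{\Omega(c)}}{c} = d_z(L_a) \prod_{P(a^*) \le p < P(a^*)^{1/\sqrt{\nu}}} \left(1 - \frac{z}{p}\right)^{-1}.
\]

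Finally, writing $Q \vcentcolon= P(a^*)^{1/\sqrt{\nu}}$ and using the definition of $\mu_x(z)$, the remaining Euler product telescopes to
\[
\prod_{P(a^*) \le p < Q} \left(1 - \frac{z}{p}\right)^{-1} = \left(\frac{\log Q}{\log P(a^*)}\right)^{z} \frac{\mu_{P(a^*)}(z)}{\mu_{Q}(z)} = \nu^{-z/2}\, \frac{\mu_{P(a^*)}(z)}{\mu_{Q}(z)}.
\]
Since $\mu_Q(z) \le M_{P(a^*)}(z)$ and $P(a^*)$ is a prime $\ge q$, the definition of $r_q(z)$ forces $\mu_{P(a^*)}(z)/\mu_Q(z) \ge 1/r_q(z)$. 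Summing over $a \in A_n$ yields $d_z(L_B) \ge \nu^{-z/2} r_q(z)^{-1} d_z(L_{A_n})$, and combining with $d_z(L_B) \le d_z(L_n)$ gives the stated inequality. The delicate step is the collapse $P(ac) = P(a)$: without the bound $P(c) < P(a)$ the factor $\prod_{p < P(ac)}(1-z/p)$ would depend on $c$ and the $c$-sum would not split off cleanly.
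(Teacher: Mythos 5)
Your proposal is correct and follows essentially the same route as the paper: enlarge $A_n$ to the family $\{ac : a \in A_n,\ c \in C_a^{\nu}\}$, invoke Lemma \ref{Lichtman lemma} for disjointness, compare with $d_z(L_n)$ via Lemma \ref{trivial1}, factor the $c$-sum using $P(ac)=P(a)$, and evaluate the Euler product over $[P(a^*), P(a^*)^{1/\sqrt{\nu}})$ as $\nu^{-z/2}\mu_{P(a^*)}(z)/\mu_{P(a^*)^{1/\sqrt{\nu}}}(z) \ge \nu^{-z/2} r_q(z)^{-1}$. Your explicit verifications that $P(a^*) \ge q$ and $P(c) < P(a)$ are details the paper leaves implicit, but the argument is the same.
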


\begin{proof}
    Without loss of generality assume that $A = A_n$. Let $a \in A, c \in C_a^{\nu}$. We have $p(c) \ge P(a^*) \ge P(n)$. Hence $ac \in L_n$. Thus
    \[
    L_n \supset \bigcup_{a \in A} \bigcup_{c \in C_{a}^{\nu}} L_{ac}.
    \]
    Lemma \ref{Lichtman lemma} implies that this is a disjoint union.
    Also $P(ac) = P(a)$ and hence by Lemma \ref{trivial1} 
    \[
    d_z(L_n) \ge \sum_{a \in A} \sum_{c \in C_a^{\nu}} d_z(L_{ac}) = 
    \sum_{a \in A} d_z(L_a) \sum_{c \in C_a^{\nu}} \frac{z^{\Omega(c)}}{c}.
    \]
    \[
    \sum_{c \in C_a^{\nu}} \frac{z^{\Omega(c)}}{c} = 
    \prod_{p \in [P(a^*), P(a^*)^{1/\sqrt{\nu}})} \left(1 - \frac{z}{p} \right)^{-1} = \prod_{p < P(a^*)^{1/\sqrt{\nu}}} \left(1 - \frac{z}{p} \right)^{-1} \prod_{p < P(a^*)} \left(1 - \frac{z}{p} \right) =
    \]
    \[
    \frac{(\log P(a^*)^{1/\sqrt{\nu}})^z}{\mu_{P(a^*)^{1/\sqrt{\nu}}}(z)} 
    \frac{\mu_{P(a^*)}(z)}{(\log P(a^*))^z} = 
    \nu^{-z/2} \frac{\mu_{P(a^*)}(z)}{\mu_{P(a^*)^{1/\sqrt{\nu}}}(z)} \ge \nu^{-z/2} \frac{\mu_{P(a^*)}(z)}{M_{P(a^*)}(z)} \ge \nu^{-z/2} r_q(z)^{-1}.
    \]
    
    This gives 
    \[
    d_z(L_n) \ge \nu^{-z/2}  r_q(z)^{-1} \sum_{a \in A} d_z(L_a) = 
    \nu^{-z/2}  r_q(z)^{-1} d_z(L_A).
    \]
\end{proof}

\begin{lemma} \label{c_i, D_i}
For $k \ge 1$, let $c_0 \ge c_1 \ge \ldots \ge c_k \ge 0$ and $0 = D_0 \le D_1 \le \ldots \le D_k$. If $d_1, \ldots, d_k \ge 0$ satisfy $\sum_{j \le i} d_j \le D_i$ for all $i \le k$, then we have
\[
\sum_{i \le k} c_i d_i \le \sum_{i \le k} c_i (D_i - D_{i - 1}). 
\]
\end{lemma}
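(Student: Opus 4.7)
The plan is to prove this by Abel summation (summation by parts), exploiting the monotonicity of the $c_i$'s to turn the pointwise bound on partial sums of $d_i$ into the desired inequality.

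Let me set $S_i = \sum_{j \le i} d_j$ so that $S_0 = 0$, $d_i = S_i - S_{i-1}$, and by hypothesis $S_i \le D_i$ for every $i \le k$. The first step is to apply Abel summation to the left-hand side:
\[
\sum_{i=1}^{k} c_i d_i = \sum_{i=1}^{k} c_i (S_i - S_{i-1}) = c_k S_k + \sum_{i=1}^{k-1} (c_i - c_{i+1}) S_i,
\]
using $S_0 = 0$. The same manipulation applied to the right-hand side (with $D_0 = 0$) gives
\[
\sum_{i=1}^{k} c_i (D_i - D_{i-1}) = c_k D_k + \sum_{i=1}^{k-1} (c_i - c_{i+1}) D_i.
\]

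The second step is to compare term by term. The hypothesis $c_0 \ge c_1 \ge \cdots \ge c_k \ge 0$ gives $c_i - c_{i+1} \ge 0$ for $1 \le i \le k-1$ and $c_k \ge 0$, so all the coefficients in the above expressions are non-negative. Since $S_i \le D_i$ for each $i$ (including $i = k$), subtracting the two identities yields the desired inequality. The monotonicity $D_0 \le D_1 \le \cdots \le D_k$ is not needed for this argument but is consistent with the setup (and ensures the right-hand side is a legitimate upper bound on partial sums).

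There is essentially no obstacle here; this is a textbook rearrangement. The only subtle point is handling the indexing carefully (in particular the boundary terms $S_0 = D_0 = 0$), so that the summation-by-parts identity goes through cleanly. The lemma is the standard ``non-increasing weights against dominated partial sums'' estimate, and it will presumably be combined in the next step with the bounds $d_z(L_{A_n}) \le \cdots$ proved in Lemmas \ref{L1bound} and \ref{LA_n bound} to obtain the key estimate on $f_z(A)$.
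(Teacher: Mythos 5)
Your Abel-summation argument is correct: the identity
\[
\sum_{i=1}^{k} c_i (S_i - S_{i-1}) = c_k S_k + \sum_{i=1}^{k-1} (c_i - c_{i+1}) S_i
\]
(and the corresponding one for $D$) is exact, the coefficients $c_k$ and $c_i - c_{i+1}$ are non-negative by hypothesis, and $S_i \le D_i$ for all $i$ gives the inequality. The paper itself does not prove the lemma but cites Lichtman \cite[Lemma 4.1]{Lichtman}, which uses the same summation-by-parts device, so your argument matches the intended one. Your side remark that the monotonicity of the $D_i$ (and in fact the non-negativity of the $d_i$) is not actually needed is also accurate, since only $D_0 = 0$ and the bounds $S_i \le D_i$ enter.
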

\begin{proof}
    See \cite[Lemma 4.1]{Lichtman}.
\end{proof}

Let us denote
\[
b_q(z) \vcentcolon = I(z) \frac{r_q(z)}{m_q(z)} \mu_q(z), \qquad
I(z) \vcentcolon = \frac{z}{2} \int_0^1 \frac{ \nu^{z/2 - 1} \, d\nu}{(1 + \nu)^z}.
\]
\begin{prop} \label{U(z) main tool}
    For any primitive set $A$, and any integer $n \notin A$ with $q = P(n)$,
    \[
    f_z(A_n) \le  \frac{q z^{\Omega(n) - 1}}{n} b_q(z)  f_z(q).
    \]
\end{prop}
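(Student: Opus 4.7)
The strategy is to quantify, for each $a \in A_n$, the ``excess'' $\nu(a) := \log a / \log P(a) - 1$ and then perform an Abel summation in this parameter, using Lemmas \ref{LA_n bound} and \ref{trivial1} as the pointwise input bounds.

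\emph{Reduction.} I would first assume $A$ is finite (both sides are monotone under truncations). Every $a \in A_n$ is automatically composite: if $a$ were prime then $a = nb$ with $p(b) \ge q$ forces $b = 1$, giving $a = n$, which contradicts $n \notin A$. Plugging the definition of $\mu_x(z)$ into $f_z(a) = z^{\Omega(a)} a^{-1}(\log P(a))^{-z}(1+\nu(a))^{-z}$ yields the identity
\[
f_z(a) \;=\; \frac{e^{\gamma z} C_z \, d_z(L_a)}{\mu_{P(a)}(z)\,(1+\nu(a))^z}.
\]
Bounding $\mu_{P(a)}(z) \ge m_q(z)$, it remains to prove
\[
\Sigma \;:=\; \sum_{a \in A_n} \frac{d_z(L_a)}{(1+\nu(a))^z} \;\le\; I(z)\, r_q(z)\, d_z(L_n);
\]
the conclusion then drops out by a direct algebraic rearrangement using $f_z(q) = e^{\gamma z} C_z d_z(L_q)/\mu_q(z)$ and the identity $d_z(L_n)/d_z(L_q) = q z^{\Omega(n)-1}/n$.

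\emph{Abel summation.} Introduce the non-decreasing step function
\[
S(\nu) \;:=\; \sum_{\substack{a \in A_n \\ \nu(a) < \nu}} d_z(L_a), \qquad S(0) = 0.
\]
For $\nu \in (0,1)$, the condition $\nu(a) < \nu$ is equivalent to $P(a)^{1+\nu} > a$, so the set of summands is a finite primitive set of composites satisfying the hypotheses of Lemma \ref{LA_n bound}, which gives $S(\nu) \le \nu^{z/2} r_q(z)\, d_z(L_n)$. For $\nu \ge 1$, Lemma \ref{trivial1} applied to $A_n \subset L_n$ yields the crude bound $S(\nu) \le d_z(L_{A_n}) \le d_z(L_n)$. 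Integration by parts (boundary terms vanish, since $S(0)=0$ and $S$ is bounded) produces
\[
\Sigma \;=\; z \int_0^{\infty} \frac{S(\nu)}{(1+\nu)^{z+1}} \, d\nu.
\]

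\emph{Assembly.} Splitting the integral at $\nu = 1$, the $(0,1)$ piece is at most $z\,r_q(z)\,d_z(L_n) \int_0^1 \nu^{z/2}(1+\nu)^{-z-1}\,d\nu$, while the $[1,\infty)$ piece is at most $d_z(L_n)\cdot 2^{-z}$. The key identity is obtained by integrating $I(z) = \int_0^1 (1+\nu)^{-z}\,d(\nu^{z/2})$ by parts:
\[
I(z) \;=\; 2^{-z} + z \int_0^1 \frac{\nu^{z/2}}{(1+\nu)^{z+1}}\,d\nu.
\]
Using $r_q(z) \ge 1$ to absorb the trivial $[1,\infty)$ contribution into the prefactor, the two pieces combine to exactly $r_q(z)\, d_z(L_n)\, I(z)$. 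The main obstacle, once the Abel summation is in place, is recognizing that the crude bound on $[1,\infty)$ furnishes precisely the $2^{-z}$ needed to complete $I(z)$; without this match one would be left with a spurious remainder, and the factor $b_q(z)$ would not come out clean.
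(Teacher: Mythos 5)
Your proposal is correct and is essentially the paper's own argument: the same stratification by the exponent $\nu(a)=\log a/\log P(a)-1$, with the same key inputs (the identity $f_z(a)=e^{\gamma z}C_z\,d_z(L_a)/(\mu_{P(a)}(z)(1+\nu(a))^z)$ underlying Lemma \ref{L1bound}, the bound of Lemma \ref{LA_n bound} on the partial sums, the trivial bound via Lemma \ref{trivial1}, and $r_q(z)\ge 1$), with your continuous Abel summation simply replacing the paper's finite partition, Lemma \ref{c_i, D_i}, and the Riemann-sum limit that produces $I(z)$. The bookkeeping — the tail over $\nu\ge 1$ contributing exactly the boundary term $2^{-z}$ of $I(z)$, and the final algebraic identities for $f_z(q)$ and $d_z(L_n)/d_z(L_q)$ — checks out.
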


\begin{proof}
    We may assume that $A = A_n$ is finite and then take a limit. All elements of $A$ are composite, since they are divisible by $n$ and $n \notin A$.

    Take $k \ge 1$ and any sequence $0 = \nu_0 < \nu_1 < \ldots < \nu_k = 1$, and partition the set $A = \bigcup_{0 \le i \le k} A_{(i)}$, where $A_{(k)} = \{ a \in A : P(a)^2 \le a \}$ and for $0 \le i < k$,
    \[
    A_{(i)} = \{ a \in A : P(a)^{1 + \nu_i} \le a  < P(a)^{1 + \nu_{i + 1}} \}.
    \]
    Application of Lemma \ref{L1bound} to each $A_{(i)}$ gives
    \[
    f_z(A) = \sum_{0 \le i \le k} f_z(A_{(i)}) \le 
    \frac{e^{\gamma z} C_z}{m_q(z)} \sum_{0 \le i \le k} \frac{d_z(L_{A_{(i)}})}{(1 + \nu_i)^z}.
    \]
    For each $j < k$ denote $A^{(j)} = \bigcup_{0 \le i \le j} A_{(i)} = \{a \in A : a < P(a)^{1 + \nu_{j + 1}} \}$. Lemma \ref{LA_n bound} implies
    \[
    \sum_{0 \le i \le j} d_z(L_{A_{(i)}}) = d_z(L_{A^{(j)}}) \le \nu_{j + 1}^{z/2} r_q(z) d_z(L_n).
    \]
    Trivially $\sum_{0 \le i \le k} d_z(L_{A_{(i)}}) = d_z(L_A) \le d_z(L_n) \le r_q(z) d_z(L_n)$. 
    Let $c_i = (1 + \nu_i)^{-z}$, $d_i = d_z(L_{A_{(i)}})$, $D_i = \nu_{i + 1}^{z/2} r_q(z) d_z(L_n)$ (and we set $\nu_{k + 1} = \nu_k$, so that $D_k - D_{k - 1} = 0$). Then by Lemma \ref{c_i, D_i}
    \[
    \sum_{0 \le i \le k} \frac{d_z(L_{A_{(i)}})}{(1 + \nu_i)^z} = 
    \sum_{0 \le i \le k} c_i d_i \le \sum_{0 \le i \le k} c_i (D_i - D_{i - 1}) =
    r_q(z) d_z(L_n) \sum_{0 \le i \le k} \frac{\nu_{i + 1}^{z/2} - \nu_{i}^{z/2}}{(1 + \nu_i)^{z}}. 
    \]
    Hence
    \[
    f_z(A) \le e^{\gamma z} C_z \frac{r_q(z)}{m_q(z)} d_z(L_n) 
    \sum_{0 \le i \le k} \frac{\nu_{i + 1}^{z/2} - \nu_{i}^{z/2}}{(1 + \nu_i)^{z}}. 
    \]
    We have $0 = \nu_0 < \nu_1 \le \ldots < \nu_k = 1$. Let $\nu_i = \frac{i}{k}$ and let $k$ tend to infinity. We obtain for some $\xi_i \in [\nu_{i - 1}, \nu_i]$:
    \[
    \lim_{k \to \infty} \sum_{1 \le i \le k}  \frac{\nu_{i}^{z/2} - \nu_{i - 1}^{z/2}}{(1 + \nu_{i - 1})^{z}} = 
    \lim_{k \to \infty} \sum_{1 \le i \le k}  (z/2) \xi_i^{z/2 - 1} \frac{\nu_{i} - \nu_{i - 1}}{(1 + \nu_{i - 1})^{z}} = \int_0^1 \frac{d(\nu^{z/2})}{(1 + \nu)^z}.
    \]
    It follows that
    \[
    f_z(A_n) \le e^{\gamma z} C_z \frac{r_q(z)}{m_q(z)} \left( \int_0^1 \frac{d(\nu^{z/2})}{(1 + \nu)^z} \right) d_z(L_n).
    \]
    Finally
    \[
    e^{\gamma z} C_z \, d_z(L_n) = e^{\gamma z} C_z \frac{z^{\Omega(n)}}{n} \prod_{p < q} \left(1 - \frac{z}{p} \right) = 
    \mu_q(z) \frac{z}{q (\log q)^z} \frac{q z^{\Omega(n) - 1}}{n} = \mu_q(z) f_z(q) \frac{q z^{\Omega(n) - 1}}{n}.
    \]
    The result follows.
\end{proof}

\begin{proof}[Proof of Theorem \ref{U(z) main}C]
    Proposition \ref{U(z) main tool} implies that $f_z(A_{q}) \le b_q(z) f_z(q)$. 
    Hence if $b_q(z) \le 1$, then $q$ is Erdős $z$--strong.

    We have $b_q(z) \le I(z) \left(\frac{M_q(z)}{m_q(z)} \right)^2$. By substitution we obtain 
    \[
    I(z) = \int_0^1 S(\tau, z) d \tau, \qquad
    S(\tau, z) \vcentcolon = (1 + \tau^{2/z})^{-z}.
    \]
    Note that for $\tau \in (0, 1]$, $z \in (0, 2]$ we have $0 < S(\tau, z) < 1$.
    Continuity of $S(\tau, z)$ implies that for each $\varepsilon > 0$ there exist $\delta > 0$ such that $S(\tau, z) < 1 - \delta$ for $\tau \in [1/2, 1], z \in [\varepsilon, 2]$.
    Thus $I(z) < 1 - \delta/2$ for $z \in [\varepsilon, 2]$.
    Since $\mu_q(z) \sim 1$ as $q$ tends to infinity uniformly in $z \in [0, 2]$, we obtain Theorem \ref{U(z) main}C.
\end{proof}

    Now we want to evaluate $b_q(z)$. In what follows we present some bounds which were used to perform the proof of Theorem \ref{U(z) main}A by computation on computer. 

    \begin{lemma} \label{b_q(z)' bound}
        For $q \ge 3$ and $z \in (0, 2]$ we have
        \[
        \partial_{+}b_q(z) \le 3.54 \, b_q(z),
        \]
        where $\partial_{+}$ denotes the right derivative.
    \end{lemma}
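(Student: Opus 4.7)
The plan is to bound the logarithmic right derivative $\partial_+ b_q(z)/b_q(z) = \partial_+ \log b_q(z)$ uniformly by $3.54$ for $q \ge 3$, $z \in (0, 2]$. Decomposing
\[
\log b_q(z) = \log I(z) + \log \mu_q(z) + \log r_q(z) - \log m_q(z),
\]
it suffices to control the right derivative of each summand separately.

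For $I(z)$, apply the substitution $\nu = u^{2/z}$ to get $I(z) = \int_0^1 (1 + u^{2/z})^{-z}\,du$, and differentiate under the integral sign. The integrand derivative equals $(1+u^{2/z})^{-z}\!\bigl(-\log(1+u^{2/z}) + \tfrac{2}{z}\tfrac{u^{2/z}\log u}{1+u^{2/z}}\bigr)$, which is $\le 0$ pointwise on $(0,1)$; hence $\partial_z \log I(z) \le 0$, contributing nonpositively. For $\mu_q(z) = e^{\gamma z} C_z (\log q)^z \prod_{p < q}(1 - z/p)$, direct differentiation combined with $\partial_z \log C_z = \sum_p \bigl[\tfrac{1/p}{1-z/p} + \log(1-1/p)\bigr]$ telescopes to
\[
\partial_z \log \mu_q(z) = \Bigl[\gamma + \log\log q - \sum_{p < q}\log(1-1/p)^{-1}\Bigr] + \sum_{p \ge q}\Bigl[\tfrac{1}{p-z} + \log(1-1/p)\Bigr].
\]
The first bracket is $o(1)$ by Mertens' third theorem (made effective by Rosser--Schoenfeld-type bounds for $q \ge 3$), while each tail summand equals $\tfrac{z}{p(p-z)} - \tfrac{1}{2p^2} + O(1/p^3) = O(1/p^2)$ and is summable over primes with an explicit uniform bound on $z \in (0, 2]$, the largest contribution coming from $p = q = 3$.

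For $m_q$, $M_q$, and $r_q$, the idea is that each $\mu_y(z)$ is $C^1$ in $z$ with right derivative uniformly bounded in $y \ge q \ge 3$ and $z \in (0, 2]$, by exactly the estimate above applied for every prime $\ge q$ (and by the analogous estimate for real $y$ in the definition of $M_q$). Consequently $m_q(z) = \inf_{p \ge q} \mu_p(z)$ and $M_q(z) = \sup_{y \ge q} \mu_y(z)$ are Lipschitz in $z$ with the same uniform constant (a Danskin-type bound for sup/inf of uniformly Lipschitz families). Together with the uniform lower bound $m_q, M_q \ge c > 0$ (again Mertens, on the compact range $z \in (0,2]$), this yields uniform upper bounds on the right logarithmic derivatives of $m_q, M_q$, and hence of $r_q(z) = \sup_{p \ge q} M_p(z)/\mu_p(z)$. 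Summing the four bounds produces $\partial_+ \log b_q(z) \le K$ for an explicit constant $K$.

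The main obstacle is the explicit numerical constant $3.54$: all the individual estimates are elementary, but the constants from the effective Mertens bounds and from the tail sum $\sum_{p \ge q}\bigl[\tfrac{1}{p-z} + \log(1-1/p)\bigr]$ must be tracked carefully. The extreme case is $q = 3$ with $z$ close to $2$, where the $p = 3$ tail term $\tfrac{1}{3-z}$ is largest and where Mertens' error is worst; the final step amounts to a careful accounting of these constants, to be checked by direct numerical computation as foreshadowed by the paper.
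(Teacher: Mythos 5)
Your structural reduction is the same as the paper's: write $\log b_q = \log I + \log r_q - \log m_q + \log \mu_q$, discard the $I$-term because $S(\tau,z)=(1+\tau^{2/z})^{-z}$ is decreasing in $z$ so $(\log I)'\le 0$, and control the remaining three terms by a uniform bound on $\partial_z\log\mu_x$ over $x\ge q$, using that a sup/inf of uniformly Lipschitz functions is Lipschitz with the same constant (the paper's factor $4$: one copy each from $\mu_q$ and $m_q$, two from $r_q=\sup_p M_p/\mu_p$). Your identity for $\partial_z\log\mu_q$ is correct, and its first bracket is exactly $\log\mu_q(1)$.

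The gap is that you never actually produce the constant. The lemma \emph{is} the numerical inequality $\partial_+ b_q(z)\le 3.54\,b_q(z)$, and your proof ends with ``$\partial_+\log b_q\le K$ for an explicit constant $K$'' plus a promise that the accounting ``is to be checked by direct numerical computation.'' That deferral is not harmless here, because the margin is thin: the paper's own accounting gives $4\bigl(0.805+0.078\bigr)=3.532$, where $0.805$ bounds $\sum_{p\ge 3}\bigl|\frac{1}{p-z}+\log(1-1/p)\bigr|$ uniformly for $z\in(0,2]$ (the $p=3$, $z=2$ term alone is $\approx 0.595$) and $0.078\ge\sup_{x\ge 3}|\log\mu_x(1)|$ comes from Lichtman's explicit bounds $m_3(1)\ge 0.925$, $M_3(1)\le 1+\tfrac{1}{2\log(2\cdot 10^9)^2}$. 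Without these two explicit estimates the claim $K\le 3.54$ is unverified, so the essential step is missing. Moreover, your route for $m_q,M_q,r_q$ — Lipschitz bounds on $\mu_y$ itself, then division by a lower bound $c>0$ — is lossier than needed and could well overshoot $3.54$; to land under the stated constant you should apply the Danskin-type argument directly to $\log\mu_y$, i.e.\ bound $\sup_{y,\,z}|\partial_z\log\mu_y(z)|\le 0.883$ and multiply by $4$, which is precisely what the paper does.
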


    \begin{proof}
    For $\tau \in (0, 1)$ the function $S(\tau, z)$ monotonically decreases in $z$.
    Hence $I(z)$ is also monotonically decreasing. Therefore
    \begin{multline*}
    \partial_{+} b_q(z) = b_q(z) \partial_{+}(\log b_q(z)) \le 
    b_q(z) \left( (\log I(z))' + 4 \sup_{x \in [q, \infty)} |\mu_x(z)'| \right) \le
    4b_q(z) \sup_{x \in [q, \infty)} |\mu_x(z)'| \le
    \\
    b_q(z) \left(
    4 \sum_{p \ge 3} \left|\left(z\log\left(1 - \frac{1}{p} \right) - \log \left(1 - \frac{z}{p} \right) \right)' \right|  
    + 4 \sup_{x \in [q, \infty)} \left|\log \mu_x(1)\right|
    \right).
    \end{multline*}
    In \cite[Lemma 2.4]{Lichtman} it is proved that $M_3(1) \le 1 + \frac{1}{2 \log(2 \cdot 10^9)^2}$ and $m_3(1) \ge 0.925$. Thus
    \[
    \sup_{x \in [3, \infty)} \left|\log \mu_x(1)\right| \le |\log(0.925)| \le 0.078.
    \]

    We have
    \[
    \sum_{p \ge 3} \left|\left(z\log\left(1 - \frac{1}{p} \right) - \log \left(1 - \frac{z}{p} \right) \right)' \right| = \sum_{p \ge 3} \left| \sum_{k \ge 2} \frac{z^{k - 1} - 1/k}{p^k} \right| \le  
    \sum_{p \ge 3} \left| \sum_{k \ge 2} \frac{2^{k - 1} - 1/k}{p^k} \right| \le
    \]
    \[ 
     \sum_{p \ge 3} \left( \frac{2}{p^2} \left(1 - \frac{2}{p} \right)^{-1} + \log\left( 1 - \frac{1}{p} \right) + \frac{1}{p}  \right) \le 0.805.
    \]
    Hence
    \[
    \partial_{+} b_q(z) \le 4 b_q(z)(0.805 + 0.078) \le 3.54 \, b_q(z).
    \]
    \end{proof}
    \begin{cor} \label{computation precision}
        If $0 < a < b < 1$, $(b - a) \le 1/4$ and $b_q(a) \le C$, then $b_q(z) \le C \left( 1 + \frac{3.54 (b - a)}{1 - 3.54 (b - a)} \right)$ on $z \in [a, b]$.
    \end{cor}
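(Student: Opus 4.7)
The plan is to recognize this as a Grönwall-type corollary of Lemma \ref{b_q(z)' bound}. First, I would simplify the target quantity by observing
\[
1 + \frac{3.54(b-a)}{1 - 3.54(b-a)} = \frac{1}{1 - 3.54(b-a)},
\]
which is a well-defined positive number, since the hypothesis $b - a \le 1/4$ forces $3.54(b-a) \le 0.885 < 1$. So the claim reduces to proving $b_q(z) \le C/(1 - 3.54(b-a))$ for all $z \in [a,b]$.

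Second, I would upgrade the pointwise bound $\partial_{+} b_q(z) \le 3.54\, b_q(z)$ of Lemma \ref{b_q(z)' bound} to an integral inequality. The building blocks $I(z)$ and $\mu_q(z)$ are smooth in $z$, while the auxiliary quantities $M_x(z), m_q(z), r_q(z)$ are sup/inf of the family $\{\mu_p(z) : p \in \mathbb{P}\}$, which is uniformly equicontinuous on compact subintervals of $(0,2)$ (via the explicit product formula for $\mu_p(z)$), so $b_q(z)$ is continuous in $z$ on $(0,1)$. Combining continuity with the right-derivative bound gives, for every $z \in [a,b]$,
\[
b_q(z) - b_q(a) \le 3.54 \int_a^z b_q(t)\, dt.
\]

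Third, I would close the argument by a standard fixed-point trick: set $B := \sup_{z \in [a,b]} b_q(z)$, which is finite by continuity on the compact interval $[a,b]$. Using $b_q(a) \le C$ and the integral inequality,
\[
B \le C + 3.54 (b-a)\, B, \qquad \text{hence} \qquad B \le \frac{C}{1 - 3.54(b-a)},
\]
which is exactly the stated bound.

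The only non-routine point is the passage from a one-sided derivative bound to an integral inequality, which needs the mild regularity of $b_q(z)$ just mentioned. A slick alternative that avoids integration is to consider $\varphi(z) := e^{-3.54(z-a)} b_q(z)$: Lemma \ref{b_q(z)' bound} yields $\partial_{+} \varphi(z) \le 0$, so $\varphi$ is non-increasing on $[a,b]$, giving $b_q(z) \le C e^{3.54(b-a)}$, and then the elementary inequality $e^{x} \le 1/(1-x)$ for $x \in [0,1)$ applied with $x = 3.54(b-a)$ yields the same bound.
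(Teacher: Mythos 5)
Your argument is correct, and it leans on exactly the same ingredient as the paper, namely the differential inequality $\partial_{+}b_q(z)\le 3.54\,b_q(z)$ of Lemma \ref{b_q(z)' bound}; the only difference is the elementary device used to convert it into a bound on $[a,b]$. The paper argues by the mean value theorem: if $b_q(z)=C+d$ with $d>0$ at some point (taken where $b_q$ is maximal), there is $z_0\in[a,z]$ with $\partial_{+}b_q(z_0)\ge d/(b-a)$ and $b_q(z_0)\le C+d$, whence $d/(b-a)\le 3.54(C+d)$ — the same fixed-point algebra as your $B\le C+3.54(b-a)B$. Your Grönwall packaging buys two small things: you make explicit the continuity of $b_q$ on $[a,b]$ (via the uniform-in-$p$ Lipschitz bound on $\mu_p(z)$, which is implicit in the proof of Lemma \ref{b_q(z)' bound} through the estimate $\sup_x|\mu_x(z)'|\le 0.883$), a point the paper's one-line MVT argument glosses over but also needs; and your integrating-factor variant actually yields the slightly sharper bound $b_q(z)\le C\,e^{3.54(b-a)}$, from which the stated bound follows by $e^{x}\le 1/(1-x)$ together with the observation $1+\frac{3.54(b-a)}{1-3.54(b-a)}=\frac{1}{1-3.54(b-a)}$ and $3.54(b-a)\le 0.885<1$. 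Both routes give the identical corollary, so this is a repackaging rather than a new proof, but a cleaner and marginally stronger one.
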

    \begin{proof}
        If $b_q(z) = C + d$, $d > 0$ for some $z$, then mean value theorem implies that there exist $z_0 \in [a, z]$, such that $b_q(z_0)' \ge d / (b - a)$ and $b_q(z) \ge C$. 
        In view of Lemma \ref{b_q(z)' bound} this gives 
        $\frac{d}{b - a} \le 3.54 (C + d)$.
    \end{proof}
    This corollary allows us to give upper bounds for $b_q(z)$ on small intervals by evaluating it at a fixed $z$.

    To give an upper bound for $b_q(z)$ we need an upper and a lower bounds on $\mu_x(z)$.

    \cite[Lemma 2.4]{Lichtman} implies that for $ q \ge 300$
    \begin{equation} \label{eq1comp}
    1 - \frac{1}{2 (\log q)^2} \le \mu_q(1) \le 1 + \min \left( \frac{1}{2 (\log 2 \cdot 10^9)^2}, \frac{1}{2 (\log q)^2} \right).
    \end{equation}
    For $z \le 1$ we have
    \begin{equation} \label{eq2comp}
    0.9998^z \prod_{x \le p \le 300} \left(1 - \frac{z}{p} \right)^{-1} \left( 1 - \frac{1}{p} \right)^z \le
    \prod_{x \le p \le 300} \left(1 - \frac{z}{p} \right)^{-1} \left( 1 - \frac{1}{p} \right)^z  \exp \left( - \sum_{p > 300} \sum_{k \ge 2} \frac{z}{k p^k}\right) \le 
    \end{equation}
    \[
    \prod_{p \ge x} \left( 1 - \frac{z}{p} \right)^{-1} \left( 1 - \frac{1}{p} \right)^{z} \le 
    \prod_{x \le p \le 300} \left(1 - \frac{z}{p} \right)^{-1} \left( 1 - \frac{1}{p} \right)^z.
    \]
    For $z \ge 1$ we have
    \begin{equation} \label{eq3comp}
    \prod_{x \le p \le 300} \left(1 - \frac{z}{p} \right)^{-1} \left( 1 - \frac{1}{p} \right)^z \le 
    \prod_{p \ge x} \left( 1 - \frac{z}{p} \right)^{-1} \left( 1 - \frac{1}{p} \right)^{z} \le
    \end{equation}
    \[
    \prod_{x \le p \le 300} \left(1 - \frac{z}{p} \right)^{-1} \left( 1 - \frac{1}{p} \right)^z \prod_{p > 300} \left(1 - \frac{2}{p} \right)^{-1} \left( 1 - \frac{1}{p} \right)^2 \le
    1.0005 \prod_{x \le p \le 300} \left(1 - \frac{z}{p} \right)^{-1} \left( 1 - \frac{1}{p} \right)^z.
    \]
    Note that
    \[
    \mu_x(z) = \mu_x(1)^z \, \prod_{p \ge x} \left( 1 - \frac{z}{p} \right)^{-1} \left( 1 - \frac{1}{p} \right)^{z}.
    \]
    We run the computer program which uses Corollary \ref{computation precision} and inequalities (\ref{eq1comp}), (\ref{eq2comp}), (\ref{eq3comp}) to obtain upper bounds on $b_q(z)$ on intervals. 

    We obtain that for $z \in [0.44, 2]$ we have $b_q(z) < 1$ for all odd primes. This proves Theorem \ref{U(z) main}A.

    If $2 \in A$ and $A$ is primitive, then Theorem \ref{U(z) main}A implies that $f_z(A) \le \gamma_1(z)$ on $z \in [0.44, 2]$.
    Let us assume that $2 \notin A$. 
    In \cite[Theorem 4.4]{Lichtman} it is proved that in this case $f_1(A) < 1.60 < \gamma_1(1)$. 
    Theorem \ref{U(z) main}B follows since all bounds we derived are continuous in $z$ and one can follow the proof of \cite[Theorem 4.4]{Lichtman}, which will work in some neighborhood of $1$. 

    \begin{rem}
        One can do a computation to derive some explicit interval $a < 1 < b$, for which  $U(z) = \gamma_1(z)$. 
        But surprisingly the presented adaptation of the method from \cite{Lichtman} does not seem to work for $z$ that are sufficiently close to $0$. We can't even prove that all sufficiently large primes are Erdős $z$-strong on $(0, 2]$. The reason is that $\lim_{z \to 0} I(z)^{1/z} = 1$ and $\mu_x(z)$ tends to $1$ as $x$ tends to infinity, but the convergence is not sufficiently fast. 
    \end{rem}

\section{Primitive density: Theorem \ref{pr_dens}}

\begin{lemma} \label{finite}
    Let $A$ be a primitive set such that $P(a) \le N$ for every $a \in A$. Then $A$ is finite.
\end{lemma}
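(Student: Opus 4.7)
The plan is to reduce the claim to a classical finiteness statement for antichains in $\mathbb{Z}_{\ge 0}^r$ (Dickson's lemma). Let $p_1 < p_2 < \cdots < p_r$ be the primes $\le N$, where $r = \pi(N)$. Since every element of $A$ has $P(a) \le N$, the map sending $a = \prod_i p_i^{e_i(a)}$ to the tuple $(e_1(a), \ldots, e_r(a)) \in \mathbb{Z}_{\ge 0}^r$ is injective, and divisibility $a \mid a'$ corresponds exactly to componentwise inequality $e_i(a) \le e_i(a')$ for all $i$. Thus the primitivity of $A$ translates to: the image of $A$ is an antichain in $(\mathbb{Z}_{\ge 0}^r, \le)$ with componentwise order.

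It therefore suffices to show that every antichain in $\mathbb{Z}_{\ge 0}^r$ is finite. I would prove this by induction on $r$. The case $r=1$ is immediate since $\mathbb{Z}_{\ge 0}$ is totally ordered, so any antichain has at most one element. For the inductive step, suppose the result holds for $r$ and let $S \subset \mathbb{Z}_{\ge 0}^{r+1}$ be an antichain. If $S$ is empty we are done; otherwise fix some $\mathbf{v} = (v_1,\ldots,v_{r+1}) \in S$. Any other $\mathbf{w} = (w_1,\ldots,w_{r+1}) \in S$ must satisfy $w_i < v_i$ for at least one coordinate $i$ (else $\mathbf{v} \le \mathbf{w}$). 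Partition $S \setminus \{\mathbf{v}\}$ into finitely many pieces $S_i^{(c)}$ indexed by the coordinate $i \in \{1,\ldots,r+1\}$ and the value $c \in \{0,1,\ldots,v_i-1\}$ where the $i$-th coordinate equals $c$; each piece, after dropping the (now constant) $i$-th coordinate, is an antichain in $\mathbb{Z}_{\ge 0}^r$ and is therefore finite by the inductive hypothesis. A finite union of finite sets is finite, so $S$ is finite.

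Combining the two steps, the image of $A$ under the exponent-tuple map is a finite antichain in $\mathbb{Z}_{\ge 0}^r$, and by injectivity $A$ itself is finite. There is no real obstacle here; the only thing to be careful about is handling the case when $A$ might contain $1$ (corresponding to the zero tuple), but that causes no issue since a singleton antichain is still finite.
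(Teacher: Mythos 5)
Your proposal is correct and follows essentially the same route as the paper: encode elements by their exponent vectors on the primes up to $N$, observe that primitivity becomes the antichain condition in $\mathbb{Z}_{\ge 0}^r$ with the componentwise order, and prove finiteness of such antichains (Dickson's lemma) by induction, fixing one element and slicing along coordinates bounded by it. The only cosmetic difference is that you exclude the fixed element from the slices while the paper lets its union range up to $a_i$ and thus cover it; both are fine.
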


\begin{proof}
    Let $p_1, p_2, \ldots, p_k$ be the sequence of all primes that are not greater than $N$. 

    On $\mathbb{Z}_{\ge 0}^k$ we can define a partial order by $(a_1, a_2, \ldots, a_k) \le (b_1, b_2, \ldots, b_k)$ iff $a_i \le b_i$ for every $i$. 

    The map $\alpha : \mathbb{Z}_{\ge 0}^k \to 
    \{ n \in \mathbb{N} \, : \, P(n) \le N \}$, $\alpha(a_1, a_2, \ldots, a_k) = p_1^{a_1}p_2^{a_2}\ldots p_k^{a_k}$
    is an isomorphism of partially ordered sets. In particular it gives a one-to-one correspondence for their antichains.

    Hence it is enough to prove that each antichain of $\mathbb{Z}_{\ge 0}^k$ is finite. We will prove it by induction.

    For $k = 1$ the statement is obvious. Now suppose that it holds for $k - 1$. Let us prove it for $k$. Let $A$ be any antichain of $\mathbb{Z}_{\ge 0}^k$. Let $(a_1, a_2, \ldots, a_n) \in A$. Then
    \[
    A = \bigcup_{i = 1}^n \bigcup_{d = 0}^{a_i} A_{(i, d)}, \,\, \text{where} \, 
    A_{(i, d)} \vcentcolon = A \cap \{ (b_1, b_2, \ldots, b_k) \in \mathbb{Z}_{\ge 0}^k \, : \, b_i = d \}.
    \]
    It is easy to see that $\{ (b_1, b_2, \ldots, b_k) \in \mathbb{Z}_{\ge 0}^k \, : \, b_i = d \}$ are isomorphic to $\mathbb{Z}_{\ge 0}^{k-1}$ and $A_{(i, d)}$ are their antichains. Hence $A_{(i, d)}$ are all finite. Thus $A$ is finite.
\end{proof}

\begin{lemma} \label{Mert3}
If $0 < z < 2 - \delta$, then
\[
\prod_{p \le x} \left( 1 - \frac{z}{p} \right) = \frac{ C_z^{-1} e^{-\gamma z}}{(\log x)^z} \left(1 + O_{\delta}\left( \frac{1}{\log x} \right) \right),
\]
where $C_z$ is defined by (\ref{C_z}).
\end{lemma}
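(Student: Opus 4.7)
The plan is to take the logarithm of the product, reduce the computation to Mertens' second theorem, and absorb the higher-order terms into $\log C_z$. After taking logarithms, the target identity becomes
\[
\sum_{p \le x} \log\!\left(1 - \frac{z}{p}\right) = -z \log\log x - z\gamma - \log C_z + O_\delta\!\left(\frac{1}{\log x}\right).
\]
I would Taylor-expand each factor as $\log(1-z/p) = -z/p - \sum_{k \ge 2} z^k/(kp^k)$, splitting the sum into a linear contribution $-z \sum_{p \le x} 1/p$ and a higher-order contribution $-\sum_{p \le x}\sum_{k \ge 2} z^k/(kp^k)$. Mertens' second theorem $\sum_{p \le x} 1/p = \log\log x + B + O(1/\log x)$, where $B$ is the Mertens constant, combined with $z < 2 - \delta$, turns the linear piece into $-z\log\log x - zB + O_\delta(1/\log x)$.

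For the higher-order piece, extend the $p$-range to all primes: the tail satisfies
\[
\sum_{p > x} \sum_{k \ge 2} \frac{z^k}{k p^k} \ll z^2 \sum_{p > x} \frac{1}{p^2} \ll_\delta \frac{1}{x},
\]
once $x$ exceeds an absolute threshold so that $z/p < 1/2$ throughout. The completed double sum I identify with $\log C_z$ plus an explicit $z\gamma$-term via two identities: first, the definition (\ref{C_z}) gives $\log C_z = \sum_p[-\log(1-z/p)+z\log(1-1/p)] = \sum_p\sum_{k\ge 2}(z^k-z)/(kp^k)$; second, the definition of Mertens' constant yields $\sum_p\sum_{k\ge 2}1/(kp^k) = \gamma - B$. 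Subtracting, $\sum_p\sum_{k\ge 2}z^k/(kp^k) = \log C_z + z(\gamma - B)$. Summing all contributions, the $\pm zB$ terms cancel and I obtain the displayed asymptotic for the logarithm; exponentiating with $e^{O_\delta(1/\log x)} = 1 + O_\delta(1/\log x)$ gives the claim.

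The only delicate point is $z$-uniformity of the implicit constants. The prefactor $z$ in front of the Mertens error is uniformly bounded by $2-\delta$; the tail estimate uses the geometric-series bound $\sum_{k \ge 2} (z/p)^k/k \le (z/p)^2$, valid once $p > 2z$; and the defining series for $\log C_z$ converges absolutely and uniformly on $z \in (0, 2-\delta)$ because each term is dominated by $(2-\delta)^k/(kp^k)$, giving a $p$-summable majorant after performing the geometric sum in $k$. The finitely many small primes, together with the trivial case of bounded $x$, contribute only an $O_\delta(1)$ quantity that is absorbed into the error. No step presents a genuine obstacle; the argument is essentially a uniform version of the standard derivation of Mertens' third theorem.
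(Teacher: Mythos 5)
Your argument is correct, but it takes a different route from the paper. The paper proves the lemma multiplicatively in two lines: it writes $\prod_{p \le x}\left(1 - \frac{z}{p}\right) = \Bigl(\prod_{p \le x}\left(1 - \frac{z}{p}\right)\left(1 - \frac{1}{p}\right)^{-z}\Bigr)\prod_{p \le x}\left(1 - \frac{1}{p}\right)^{z}$, observes that the first factor equals $C_z^{-1}\bigl(1 + O_{\delta}(x^{-1})\bigr)$ because it is a partial product of the absolutely (and, for $z \le 2-\delta$, uniformly) convergent product defining $C_z^{-1}$, and then invokes Mertens' third theorem raised to the bounded power $z$. You instead work additively with logarithms: Mertens' second theorem handles $-z\sum_{p \le x} 1/p$, and you reconstruct the constant through the identity $\sum_p \sum_{k \ge 2} (k p^k)^{-1} = \gamma - B$ combined with $\log C_z = \sum_p \sum_{k \ge 2} (z^k - z)/(k p^k)$, after which the $\pm zB$ terms cancel. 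One remark: the relation $\gamma - B = \sum_p \sum_{k \ge 2}(k p^k)^{-1}$ is not the definition of $B$ (which is defined as the constant in Mertens' second theorem) but a theorem essentially equivalent to Mertens' third theorem, so your proof consumes the same input as the paper's, just at the level of constants rather than as a ready-made asymptotic for $\prod_{p\le x}(1-1/p)$; citing it as a standard fact is legitimate. Your uniformity discussion (the $p=2$ factor controlled by $z \le 2-\delta$, the tail $\ll 1/x$, bounded $x$ trivial) is sound, and both proofs end with the same dominant error $O_{\delta}(1/\log x)$ coming from Mertens; the paper's version is shorter, while yours is more self-contained at the level of the expansion.
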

\begin{proof}
    In case $z = 1$ this is Mertens' third theorem. See, for example, \cite[p. 19, Theorem 1.12]{Tenenbaum}. 
    By Taylor expansion we deduce that
    \[
    C_z^{-1} = \prod_{p \le x} \left(1 - \frac{z}{p} \right) \left(1 - \frac{1}{p} \right)^{-z} (1 + O_{\delta}(x^{-1})).
    \]
    We have
    \[
    \prod_{p \le x} \left( 1 - \frac{z}{p} \right) = C_z^{-1} (1 + O_{\delta}(x^{-1})) \prod_{p \le x} \left(1 - \frac{1}{p} \right)^{z} =  \frac{C_z^{-1} e^{-\gamma z}}{(\log x)^z} \left(1 + O_{\delta}\left( \frac{1}{\log x} \right) \right).
    \]
\end{proof}

Recall (\ref{d_z(L_A) definition}) the definition of $d_z(L_A)$.

\begin{lemma} \label{prim_dens_upper}
Let $A$ be a primitive set such that $1 \notin A$. Then as $z$ tends to $0$
\[
f_z(A) = \sum_{a \in A} \frac{z^{\Omega(a)}}{a} \prod_{p < a}\left( 1 - \frac{z}{p }\right) + o(1) \le 
d_z(L_A) + o(1) =
\sum_{a \in A} \frac{z^{\Omega(a)}}{a (\log P(a))^z} + o(1).
\]
\end{lemma}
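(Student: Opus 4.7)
The plan is to prove the two equalities via a splitting argument, combining Lemma \ref{Mert3} with the uniform bound on $f_z(A)$ and on $\sum_{a \in A} \frac{z^{\Omega(a)}}{a (\log P(a))^z}$ coming from \eqref{first ineq}. The middle inequality is immediate: since $P(a) \le a$, the set $\{p : p < P(a)\}$ is contained in $\{p : p < a\}$, and the extra factors $1 - z/p$ lie in $(0, 1]$ for $z \in (0, 2)$, so termwise $\prod_{p < a}(1 - z/p) \le \prod_{p < P(a)}(1 - z/p)$, giving $\sum_{a \in A} \frac{z^{\Omega(a)}}{a} \prod_{p < a}(1 - z/p) \le d_z(L_A)$.

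The crucial analytic input is that $C_z^{-1} e^{-\gamma z} \to 1$ as $z \to 0^+$, since $C_0 = 1$. By Lemma \ref{Mert3}, for $z \in (0, 1]$ and $a \ge 3$,
\[
(\log a)^z \prod_{p < a}\left(1 - \frac{z}{p}\right) = C_z^{-1} e^{-\gamma z}\left(1 + O\!\left(\tfrac{1}{\log a}\right)\right),
\]
with an absolute implied constant. Writing
\[
f_z(A) - \sum_{a \in A} \frac{z^{\Omega(a)}}{a}\prod_{p < a}\left(1 - \frac{z}{p}\right) = \sum_{a \in A} \frac{z^{\Omega(a)}}{a (\log a)^z}\left(1 - (\log a)^z \prod_{p < a}\left(1 - \tfrac{z}{p}\right)\right),
\]
I would split the sum at a threshold $N$. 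For $a > N$, the bracket is bounded by $|1 - C_z^{-1} e^{-\gamma z}| + O(1/\log N)$, so the tail is at most $\bigl(|1 - C_z^{-1} e^{-\gamma z}| + O(1/\log N)\bigr) f_z(A)$, which is $O(1)$ uniformly in $z$ by \eqref{first ineq}. Choosing first $N$ large and then $z$ small makes this $o(1)$. For $a \le N$, the primitive set $A \cap [2, N]$ is finite, and since $1 \notin A$ each term carries $z^{\Omega(a)}$ with $\Omega(a) \ge 1$; the bracket is bounded, so this finite sum is $o(1)$ as $z \to 0$.

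For the third equality $d_z(L_A) = \sum_{a \in A} \frac{z^{\Omega(a)}}{a (\log P(a))^z} + o(1)$, I would repeat the same argument verbatim with $\log a$ replaced by $\log P(a)$. The only wrinkle is the ``small $P(a)$'' regime: by Lemma \ref{finite}, $\{a \in A : P(a) \le N\}$ is finite, so it is handled exactly as the head above; the tail $P(a) > N$ uses $\sum_{a} \frac{z^{\Omega(a)}}{a (\log P(a))^z} = O(1)$, which again follows from \eqref{first ineq}.

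The main obstacle is securing uniformity of the Mertens error when summing over a possibly infinite primitive set $A$. This is precisely what the splitting trick addresses: the uniform bound from Theorem \ref{main} (equivalently \eqref{first ineq}) tames the tail, while Lemma \ref{finite} together with $\Omega(a) \ge 1$ (guaranteed by $1 \notin A$) reduces the head to a finite sum of vanishing terms.
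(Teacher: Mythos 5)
Your proposal is correct and follows essentially the same route as the paper: split the sum at a finite threshold $N$, use the uniform Mertens-type asymptotic (Lemma \ref{Mert3}, together with $C_z^{-1}e^{-\gamma z}\to 1$) on the tail with the uniform boundedness from \eqref{first ineq} (Theorem \ref{main}), handle the head by finiteness plus $\Omega(a)\ge 1$ (and Lemma \ref{finite} for the $\log P(a)$ version), and get the middle inequality termwise from $P(a)\le a$. The paper packages the Mertens error and the constant into a single $(1+\theta)$ factor and bounds the error sum by $d_z(L_A)\le 1$, but this is only a cosmetic difference from your argument.
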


\begin{proof}

Lemma \ref{Mert3} shows that for any $\epsilon > 0$ there exist $N > 0, \delta > 0$ such that $\forall x > N, \forall z < \delta$ one has 
\[
\prod_{p < x} \left(1 - \frac{z}{p} \right) = \frac{(1 + \theta)^{-1}}{(\log x)^z}, \quad |\theta| < \epsilon.
\]

We have
\[
f_z(A) =  \left( \sum_{\substack{a \in A \\ a \le N}} + \sum_{\substack{a \in A \\ a > N}} \right) \frac{z^{\Omega(a)}}{a (\log a)^z}  = \Sigma_1 + \Sigma_2.
\]

For any fixed $N$ we have $\Sigma_1 = o(1)$. Hence for each $z < \delta$ we obtain
\[
f_z(A) = \Sigma_2 + o(1) = \sum_{a \in A} \frac{(1 + \theta_a)z^{\Omega(a)}}{a} \prod_{p < a} \left(1 - \frac{z}{p} \right) + o(1),
\]
where $|\theta_a| < \epsilon$ for each $a$. Thus for each $z < \delta$
\[
f_z(A) = (1 + \theta) \sum_{a \in A} \frac{z^{\Omega(a)}}{a} \prod_{p < a} \left(1 - \frac{z}{p} \right) + o(1), \quad |\theta| < \epsilon.
\]
But this holds for an arbitrary small $\epsilon$. Hence as $z$ tends to $0$
\[
f_z(A) = \sum_{a \in A} \frac{z^{\Omega(a)}}{a} \prod_{p < a} \left(1 - \frac{z}{p} \right) + o(1).
\]

The equality 
\[
d_z(L_A) = \sum_{a \in A} \frac{z^{\Omega(a)}}{a (\log P(a))^z} + o(1)
\]
can be proved in the same way as for each $N$ the set $\{a : P(a) < N\}$ is finite by Lemma \ref{finite}.

Finally 
\[
\sum_{a \in A} \frac{z^{\Omega(a)}}{a} \prod_{p < a} \left(1 - \frac{z}{p} \right) \le 
\sum_{a \in A} \frac{z^{\Omega(a)}}{a} \prod_{p < P(a)} \left(1 - \frac{z}{p} \right) = d_z(L_A).
\]
\end{proof}

\begin{lemma} \label{f_k(z) = 1 + o(1)}
For all $z \in (0, 2)$ we have  $d_z(L_{\mathbb{P}_k}) = 1$. And as $z$ tends to $0$ 
\[
\gamma_k(z) = d_z(L_{\mathbb{P}_k}) + o(1).
\]
\end{lemma}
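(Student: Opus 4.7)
My plan is to treat the two assertions separately. The exact identity $d_z(L_{\mathbb{P}_k}) = 1$ will follow by induction on $k$ from a partition of $L_a$ together with a telescoping identity, while the asymptotic $\gamma_k(z) = d_z(L_{\mathbb{P}_k}) + o(1)$ will follow by combining Lemma \ref{prim_dens_upper} with the lower bound $P(a) \ge a^{1/k}$ on $\mathbb{P}_k$.

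For the first part I would first establish the disjoint decomposition
\[
L_a = \{a\} \sqcup \bigsqcup_{q \ge P(a),\, q \in \mathbb{P}} L_{aq},
\]
which holds because any $n \in L_a \setminus \{a\}$ has a well-defined smallest prime factor $q = p(n/a) \ge P(a)$, and then $n \in L_{aq}$ since $P(aq) = q$. I would then record the telescoping identity
\[
\sum_{q \ge Q,\, q \in \mathbb{P}} \frac{z}{q} \prod_{p < q}\left(1 - \frac{z}{p}\right) = \prod_{p < Q}\left(1 - \frac{z}{p}\right),
\]
valid for every prime $Q$, which is just $\sum_{n \ge n_0}(P_{n-1} - P_n) = P_{n_0 - 1}$ with $P_n = \prod_{i \le n}(1 - z/p_i)$, using that $\prod_p(1 - z/p) = 0$ for $z > 0$ because $\sum_p z/p$ diverges. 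Multiplying by $z^{\Omega(a)}/a$ and applying this with $Q = P(a)$ would give
\[
\sum_{q \ge P(a)} d_z(L_{aq}) = \frac{z^{\Omega(a)}}{a} \prod_{p < P(a)}\left(1 - \frac{z}{p}\right) = d_z(L_a).
\]
Summing over $a \in \mathbb{P}_k$ and reparametrising $\mathbb{P}_{k+1}$ as $b = aq$ where $a = b/P(b) \in \mathbb{P}_k$ and $q = P(b) \ge P(a)$, I obtain the recursion $d_z(L_{\mathbb{P}_{k+1}}) = d_z(L_{\mathbb{P}_k})$. The base case $k = 1$ is exactly the telescoping identity with $Q = 2$, giving $d_z(L_{\mathbb{P}_1}) = 1$, and induction then yields $d_z(L_{\mathbb{P}_k}) = 1$ for every $k \ge 1$.

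For the asymptotic I would invoke Lemma \ref{prim_dens_upper} on $A = \mathbb{P}_k$, which is legitimate since $\mathbb{P}_k$ is primitive and does not contain $1$ for $k \ge 1$; the lemma yields immediately the upper bound $\gamma_k(z) \le d_z(L_{\mathbb{P}_k}) + o(1) = 1 + o(1)$. For the matching lower bound I would use that $a \in \mathbb{P}_k$ forces $P(a) \ge a^{1/k}$, hence $(\log P(a))^{-z} \le k^z (\log a)^{-z}$. Combining with the second equality in Lemma \ref{prim_dens_upper} gives
\[
1 = d_z(L_{\mathbb{P}_k}) = \sum_{a \in \mathbb{P}_k} \frac{z^k}{a (\log P(a))^z} + o(1) \le k^z \gamma_k(z) + o(1),
\]
so $\gamma_k(z) \ge k^{-z}(1 - o(1)) \to 1$ as $z \to 0$. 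Combining both bounds yields $\gamma_k(z) = 1 + o(1) = d_z(L_{\mathbb{P}_k}) + o(1)$.

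The only delicate point is verifying that the double sum over $\{(a,q) : a \in \mathbb{P}_k,\, q \ge P(a)\}$ really does reassemble as the sum over $\mathbb{P}_{k+1}$ via $b = aq$; once the partition of $L_a$ is set up correctly, the telescoping and induction are routine, and the asymptotic piece is essentially just calibrating Mertens (already done inside Lemma \ref{prim_dens_upper}) against the elementary inequality $\log P(a) \ge (\log a)/k$.
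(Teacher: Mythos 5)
Your argument is correct. The asymptotic half is essentially the paper's own proof: the upper bound is Lemma \ref{prim_dens_upper} applied to $A=\mathbb{P}_k$, and the lower bound is exactly the observation $\log a \le k\log P(a)$, giving $\gamma_k(z)\ge k^{-z}d_z(L_{\mathbb{P}_k})+o(1)$ with $k^{-z}\to 1$. The identity $d_z(L_{\mathbb{P}_k})=1$, however, you prove by a genuinely different route. The paper re-enters the proof of Theorem \ref{main}: with the truncated completely multiplicative function $g_N$ (equal to $z/p$ for $p\le N$, zero otherwise) it writes $\bigl(\sum_n g_N(n)\bigr)d_z(L_{A_N})=\sum_n g_N(n)-\sum_{n\notin L_{A_N}}g_N(n)$ for $A_N=\{a\in\mathbb{P}_k: P(a)\le N\}$, and then shows the omitted mass is $\ll_z (\log\log N)^{k-1}$ against a total of $\asymp(\log N)^z$, so the ratio tends to $1$. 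You instead use the telescoping identity $\sum_{q\ge Q}\frac{z}{q}\prod_{p<q}\bigl(1-\frac{z}{p}\bigr)=\prod_{p<Q}\bigl(1-\frac{z}{p}\bigr)$ (valid because $\prod_p(1-z/p)=0$ for $0<z<2$, all terms being nonnegative) together with the bijection $b\mapsto(b/P(b),P(b))$ between $\mathbb{P}_{k+1}$ and pairs $(a,q)$ with $a\in\mathbb{P}_k$, $q\ge P(a)$, to get the recursion $d_z(L_{\mathbb{P}_{k+1}})=d_z(L_{\mathbb{P}_k})$ and conclude by induction from the base case $d_z(L_{\mathbb{P}_1})=1$. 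Your route is more elementary and self-contained: it needs no Mertens-type comparison of $(\log\log N)^{k-1}$ with $(\log N)^z$ and no limiting argument in $N$, only nonnegativity to justify rearranging the double sum (which you correctly flag, and which does hold since the representation $b=aq$ with $q=P(b)$ is unique). What the paper's route buys is the density interpretation made explicit in its remark — $d_z(L_A)$ appears as the exact proportion of the $g_N$-mass carried by $L_{A_N}$, with the complement quantified — and it reuses machinery already set up for Theorem \ref{main}; your partition $L_a=\{a\}\sqcup\bigsqcup_{q\ge P(a)}L_{aq}$, while not strictly needed for the computation, is the correct structural justification and is consistent with Lemma \ref{L_a intersection}. (Both proofs, like the paper's, implicitly treat $k\ge 1$; $k=0$ is a degenerate convention case.)
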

\begin{proof}
    We analyse the proof of Theorem \ref{main}. Let $\preceq$ be the usual order $\le$ on $\mathbb{P}$. Let $A_N \vcentcolon = \{a \in \mathbb{P}_k : P(a) \le N \}$. Let $g_N(n)$ be a completely multiplicative function such that $g_N(p) = z/p$ if $p \le N$, $g_N(p) = 0$ otherwise. 
    
    As in the proof of Theorem \ref{main} we obtain
    \[
    \sum_n g_N(n) - \sum_{n \in \mathbb{N} \setminus L_{A_N}} g_N(n) = \sum_n g_N(n) \sum_{a \in A_N} g_N(a) \prod_{p < P(a)} (1 - g_N(p)) = \left( \sum_n g_N(n) \right) d_z(L_{A_N}).
    \]
    Obviously $\lim_{N \to \infty} d_z(L_{A_N}) = d_z(L_{\mathbb{P}_k})$. Thus to prove that $d_z(L_{\mathbb{P}_k}) = 1$ it is enough to show that
    \[
    \lim_{N \to \infty} \frac{\sum_{n \in \mathbb{N} \setminus L_{A_N}} g_N(n)}{\sum_n g_N(n)} = 0.
    \]
    
    We have
    \[
    \sum_{n \in \mathbb{N} \setminus L_{A_N}} g_N(n) = \sum_{l < k} \sum_{a \in \mathbb{P}_l : P(a) \le N } \frac{z^l}{a} \le \left( 1 +  \sum_{p \le N} \frac{z}{p} \right)^{k-1} \asymp_z z^{k-1} (\log \log N)^{k-1}.
    \]
    And
    \[
    \sum_n g_N(n) = \prod_{p \le N} \left( 1 - \frac{z}{p} \right)^{-1} \asymp (\log N)^z,
    \]
    which finishes the proof of $d_z(L_{\mathbb{P}_k}) = 1$.

    Now let us prove the second part of the Lemma.
   By Lemma \ref{prim_dens_upper} we have $\gamma_k(z) \le 
   d_z(L_{\mathbb{P}_k}) + o(1)$. Hence it enough to prove that $\gamma_k(z) \ge 
   d_z(L_{\mathbb{P}_k}) + o(1)$ as $z \to 0$. Note that if $\Omega(n) = k$, then $\log n \le k \log(P(n))$. Hence we have
   \[
   \gamma_k(z) = \sum_{\Omega(n) = k} \frac{z^k}{n (\log n)^z} \ge k^{-z} \sum_{a \in \mathbb{P}_k} \frac{z^{\Omega(a)}}{a (\log P(a))^z} = k^{-z} d_z(L_{\mathbb{P}_k}) + o(1) = d_z(L_{\mathbb{P}_k}) + o(1).
   \]

\end{proof}

\begin{proof}[Proof of Theorem \ref{pr_dens}]
    The inequalities $0 \le \underline{\eta}(A) \le \overline{\eta}(A)$ are trivial and we only have to prove that $\overline{\eta}(A) \le 1$. By Lemma \ref{prim_dens_upper} and Theorem \ref{main}
    \[
    f_z(A) \le d_z(L_{A}) + o(1) \le 1 + o(1)
    \]
    which proves Theorem \ref{pr_dens}A.

    Theorem \ref{pr_dens}B is essentially Lemma \ref{f_k(z) = 1 + o(1)}.

    To prove Theorem \ref{pr_dens}C we need to construct $A$ such that $\underline{\eta}(A) = 0, \, \overline{\eta}(A) = 1$. We  construct $A$ as a subset of $\mathbb{P}$.

    Let us define the sequences $z_n, a_n$ by induction. Let $a_0 = 0$. 
    For $n \ge 1$ let us define $z_n$ by conditions 
    $
    z_n < 2^{-n}, \,\,
    \sum_{p > a_{n-1}} \frac{z_n}{p(\log p)^{z_n}} > 1 - 2^{-(n + 1)}.
    $
    Such $z_n$ exists because $\eta(\mathbb{P}) = 1$.
    Now let us define $a_n$ by conditions
    $\sum_{a_{n-1} < p \le a_n} \frac{z_n}{p(\log p)^{z_n}} > 1 - 2^{-n}$, $a_n > a_{n-1}$.

    Take 
    \[
    A = \mathbb{P} \cap \bigcup_{n \ge 1} (a_{2n - 1}, a_{2n}]
    \]
    Denote
    \[
    d(z) = \sup_{0 < z' \le z} \left| \gamma_1(z')\right|.
    \]
    We know that $\lim_{z \to 0} d(z) = 1$.
    
    Then for $n \ge 1$, $f_{z_{2n}}(A) > 1 - 2^{-2n}$, $f_{z_{2n + 1}}(A) < d(2^{-2n-1}) - (1 - 2^{-2n - 1})$ and $\lim_{n \to \infty} z_n = 0$.
Therefore $\underline{\eta}(A) = 0, \, \overline{\eta}(A) = 1$.

    Now let us prove Theorem \ref{pr_dens}D. We have $A \subset \mathbb{P}_k$ and Dirichlet density of $A$ is $c$. Lemma \ref{sum to int} implies
    \[
    f_z(A) = \frac{z^k}{\Gamma(z)} \int_1^{\infty} \left( \sum_{a \in A} a^{-s} \right) (s - 1)^{z-1} ds.
    \]
    According to the conditions of the Theorem for any $\delta > 0$ there exist $\varepsilon > 0$, such that for $s \in (1, 1 + \varepsilon]$
    \[
    (c - \delta) \sum_{n \in \mathbb{P}_k} n^{-s} 
    \le
    \sum_{a \in A} a^{-s}
    \le
    (c + \delta) \sum_{n \in \mathbb{P}_k} n^{-s}.
    \]

    Note that
    \[
    \frac{z^k}{\Gamma(z)} \int_{1 + \varepsilon}^{\infty} (\zeta(s) - 1) (s - 1)^{z - 1} ds \ll_{\varepsilon} \frac{z^k}{\Gamma(z)} \int_{1}^{\infty} 2^{-s} (s - 1)^{z - 1} ds = \frac{z^k}{2 (\log 2)^z} = o(1).
    \]
    Hence
    \[
    (c - \delta) \gamma_k(z) + o(1) \le f_z(A) \le
    (c + \delta) \gamma_k(z) + o(1).
    \]
    Let $z$ tend to $0$. We obrain
    \[
    c - \delta \le \underline{\eta}(A) \le \overline{\eta}(A) \le c + \delta.
    \]
    But $\delta$ is arbitrary small. Thus $\eta(A) = c$ and Theorem \ref{pr_dens}D is proved.
    
    Let us prove Theorem \ref{pr_dens}E.
    For an element $b \in B$ let us define $l(b)$ to be the maximum length of the sequence $b_{0}, b_{1}, \ldots, b_{l(b)}$ such that $b_0 = b$ and $b_{i} | b_{i + 1}$ for $0 \le i < l(b)$. 
    Let us assume that $B$ does not contain $n$ elements $b_1, b_2, \ldots, b_n$ such that $b_i | b_{i + 1}$. 
    This means exactly that for each element $b \in B$ we have $l(b) \le n - 2$.
    Let us represent $B$ as a disjoint union $B = \cup_{i = 0}^{n - 2} B_i$, where $B_{i} \vcentcolon = \{ b \in B \, : \, l(b) = i \}$. 
    Then $B_{i}$ are primitive sets. Indeed, if $b, b' \in B_i$ and $b | b'$, then $i = l(b) > l(b') = i$ -- contradiction. 
    
    By Theorem \ref{pr_dens}A
    \[
    n - 1 < \overline{\eta}(B) \le \sum_{i = 0}^{n - 2} \overline{\eta}(B_i) \le  \sum_{i = 0}^{n - 2} 1 = n - 1
    \]
    This gives a contradiction.  
    Hence $B$ contains $n$ elements $b_1, b_2, \ldots, b_n$ such that $b_i | b_{i + 1}$. 
Now we delete these $n$ elements from $B$. The upper primitive density of $B$ remains the same and we can repeat the process of finding other $n$ elements. This finishes the proof of Theorem \ref{pr_dens}E.

    Let us prove Theorem \ref{pr_dens}F. The construction is similar to the proof of Theorem \ref{tightness}.

    Let $\mathbb{P} = \{p_1, p_2, \ldots \}$. Let $C_i$ be a monotonically increasing sequence. 
    Denote $P_i \vcentcolon = \prod_{j < i} p_j$ and let 
    \[
    B_i \vcentcolon = \{ p_i b \, : \, (b, P_i) = 1, \Omega(b) \le C_i  \}.
    \]
    Let $B = \bigcup_{i = 1}^{\infty} B_i$. If $b \in B_i, b' \in B_j$ and $b | b'$, then $j \le i$ and if $j = i$, then $\Omega(b) < \Omega(b') \le C_i$.  Thus $B$ does not contain an infinite sequence $b_1, b_2, \ldots$, such that $b_i | b_{i + 1}$.

    Let us denote $G \vcentcolon = \inf_{z \in (0, 2)} G(z)$. We know that $G > 0$. 
    Theorem \ref{gamma_k(z) main thm} implies that for all $C_i$ large enough we have for each $z \in (0, 1]$
    \[
    f_z(B_i) > \frac{z}{p_i} \prod_{p < p_i} \left( 1 - \frac{z}{p_i} \right)\frac{G C_i}{2}.
    \]
    But
    \[
    \sum_{i} \frac{z}{p_i} \prod_{p < p_i} \left( 1 - \frac{z}{p_i} \right) = d_z(L_{\mathbb{P}}) = 1.
    \]
    Hence if $C_i$ grows fast enough, then
    \[
    f_z(B) = \sum_{i} f_z(B_i) = \infty
    \]
    for all $z \in (0, 1]$. In particular $\eta(B) = \infty$.
\end{proof}

\section{$z$-logarithmic density: Theorem \ref{D_z}} \label{z-density}

To compute $\delta(h_z, A, N)$ we need first to evaluate the sum $\sum_{n \le N} z^{\Omega(n)}/n$.

\begin{lemma} For $0 < z < 2$ \label{z^Omega sum}
\[
\sum_{n \le x} z^{\Omega(n)} = z G(z) x (\log x)^{z - 1} \left(1 + O_z\left( \frac{1}{\log x} \right) \right), 
\]
\[
\sum_{n \le x} 2^{\Omega(n)} \sim C_2 x (\log x)^2,
\]
where $C_2 = (8 \log 2)^{-1} \prod_{p > 2} (1 + 1/p(p-2))$.
\end{lemma}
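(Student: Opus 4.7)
The plan is to treat the two formulas separately via Dirichlet series and a Selberg--Delange / Tauberian approach. I would start from the Euler product
\[
F(s,z) \vcentcolon= \sum_{n\ge 1}\frac{z^{\Omega(n)}}{n^s} = \prod_p\left(1-\frac{z}{p^s}\right)^{-1},
\]
which converges absolutely for $\operatorname{Re}(s)>1$ (assuming $z<2^{\operatorname{Re}(s)}$) and compare it to a power of the Riemann zeta function.

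For $0<z<2$, I would write $F(s,z)=\zeta(s)^{z}H(s,z)$, where
\[
H(s,z)=\prod_p\left(1-\frac{z}{p^s}\right)^{-1}\!\left(1-\frac{1}{p^s}\right)^{z}.
\]
One checks by expanding the logarithm that $H(s,z)$ is analytic and bounded on $\operatorname{Re}(s)\ge \tfrac12+\delta$ for every $\delta>0$ (the local factor at $p$ is $1+O_z(p^{-2s})$), so it admits holomorphic continuation past $s=1$. At $s=1$ it equals $\Gamma(1+z)\,G(z)$ by direct comparison with the definition of $G(z)$ in Theorem~\ref{gamma_k(z) main thm}. The classical Selberg--Delange theorem (as presented, e.g., in Tenenbaum, Ch.~II.5) then yields
\[
\sum_{n\le x}z^{\Omega(n)} = \frac{H(1,z)}{\Gamma(z)}\,x(\log x)^{z-1}\left(1+O_z\!\left(\frac{1}{\log x}\right)\right),
\]
and since $H(1,z)/\Gamma(z)=\Gamma(1+z)G(z)/\Gamma(z)=z\,G(z)$ we obtain the first asymptotic.

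For $z=2$ the factor at $p=2$ becomes singular at $s=1$, so $H$ no longer has the required regularity. I would isolate that bad factor:
\[
F(s,2)=(1-2^{1-s})^{-1}\prod_{p>2}\left(1-\frac{2}{p^s}\right)^{-1} = (1-2^{1-s})^{-1}\,\zeta(s)^{2}\,\widetilde H(s),
\]
where $\widetilde H(s)=\prod_{p>2}(1-2/p^s)^{-1}(1-1/p^s)^{2}$ is analytic and nonvanishing on $\operatorname{Re}(s)\ge \tfrac12+\delta$. Near $s=1$ we have $(1-2^{1-s})^{-1}\sim \frac{1}{(\log 2)(s-1)}$ and $\zeta(s)^{2}\sim (s-1)^{-2}$, so $F(s,2)$ has a triple pole at $s=1$ with leading coefficient $\frac{\widetilde H(1)}{4\log 2}$, where $\widetilde H(1)=\prod_{p>2}\frac{(p-1)^2}{p(p-2)}=\prod_{p>2}\bigl(1+\tfrac{1}{p(p-2)}\bigr)$. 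A standard Perron-formula / Landau-type Tauberian argument (shifting the contour past $s=1$, using the log-free region of $\zeta$ together with convexity bounds as in Tenenbaum II.5) yields
\[
\sum_{n\le x}2^{\Omega(n)} \sim \frac{\widetilde H(1)}{4\log 2}\cdot\frac{x(\log x)^2}{2!} = C_2\,x(\log x)^2,
\]
which is exactly the stated constant.

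The main technical obstacle is the $z=2$ case: the Selberg--Delange framework does not apply verbatim because of the extra simple pole contributed by the prime $2$, so one has to push the contour integration argument by hand to handle a triple pole rather than the usual fractional singularity; alternatively, one can deduce the asymptotic from the already known $\sum_{n\le x,\,(n,2)=1}2^{\Omega(n)}\sim c\,x(\log x)$ (a Selberg--Delange statement with $z=2$ but no bad prime, after absorbing $(1-1/2^s)^2$ into the regular part) by summing over dyadic contributions $n=2^{j}m$ with $m$ odd and estimating $\sum_{2^j m\le x}$ explicitly. I expect the rest to be routine bookkeeping once the analytic structure of $F(s,z)$ at $s=1$ is pinned down.
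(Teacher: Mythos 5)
Your approach is essentially the paper's: the paper disposes of this lemma by citing Tenenbaum (the Selberg--Delange theorem, Theorem II.6.2, for $0<z<2$, and an exercise for the $z=2$ case), and your write-up simply carries out those applications explicitly; the identification $H(1,z)=\Gamma(1+z)G(z)$ and hence the main term $zG(z)x(\log x)^{z-1}$ is correct, as is the final constant $C_2$. Two details should be repaired. First, $H(s,z)$ is \emph{not} analytic and bounded on $\operatorname{Re}(s)\ge\tfrac12+\delta$ for every $z\in(0,2)$: the Euler factor at $p=2$ has poles where $2^{s}=z$, i.e.\ on the line $\operatorname{Re}(s)=\log_2 z$, which lies to the right of $\tfrac12+\delta$ as soon as $z>2^{1/2+\delta}$. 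For each fixed $z<2$ this is harmless, since $H(\cdot,z)$ is holomorphic and bounded on $\operatorname{Re}(s)\ge\log_2 z+\delta$, a neighbourhood of $s=1$, which is all that Selberg--Delange requires (and this obstruction is precisely why uniformity degenerates as $z\to2$); but the claim should be stated that way. Second, your displayed factorization at $z=2$ omits the compensating $p=2$ factor coming from $\zeta(s)^2$: correctly, $F(s,2)=(1-2^{1-s})^{-1}(1-2^{-s})^{2}\,\zeta(s)^{2}\,\widetilde H(s)$ with your $\widetilde H$. As written, your identity would give leading coefficient $\widetilde H(1)/\log 2$ at the triple pole and hence the constant $4C_2$; the coefficient $\widetilde H(1)/(4\log 2)$ you actually use (which does give $C_2$) is the one produced by the corrected identity, since $(1-2^{-1})^{2}=\tfrac14$. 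With these two fixes the argument is sound, and your alternative route via $\sum_{m\le x,\ m\ \mathrm{odd}}2^{\Omega(m)}\sim c\,x\log x$ and summation over $n=2^{j}m$ yields the same constant.
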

\begin{proof}
    See \cite[p. 301, Theorem 6.2]{Tenenbaum} and \cite[p. 59, exer. 57]{Tenenbaum}.
\end{proof}

\begin{lemma} \label{dens_whole}
    If $0 < z < 2$, then
    \[
    \sum_{n \le N} \frac{z^{\Omega(n)}}{n} = G(z) (\log N)^z + O_z \left( (\log N)^{z-1} \right),
    \]
    \[
    \sum_{n \le N} \frac{2^{\Omega(n)}}{n} \sim (C_2 / 3) (\log N)^3.
    \]
\end{lemma}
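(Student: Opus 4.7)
The plan is to reduce both assertions to Lemma \ref{z^Omega sum} via Abel (partial) summation. Write $A(x) \vcentcolon = \sum_{n \le x} z^{\Omega(n)}$. The standard partial summation identity gives
\[
\sum_{n \le N} \frac{z^{\Omega(n)}}{n} = \frac{A(N)}{N} + \int_1^N \frac{A(t)}{t^2}\, dt,
\]
so everything reduces to inserting the asymptotic for $A(t)$ and computing. I would split the integral at $t = 2$, absorbing $\int_1^2 A(t)/t^2\, dt$ into an $O(1)$ constant, since $A(t) \le A(2)$ there.

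For $0 < z < 2$, Lemma \ref{z^Omega sum} supplies $A(t) = zG(z)\, t (\log t)^{z-1} + O_z(t (\log t)^{z-2})$ on $t \ge 2$. Substituting $u = \log t$ in the main contribution,
\[
\int_2^N \frac{zG(z)(\log t)^{z-1}}{t}\, dt = G(z)\bigl((\log N)^z - (\log 2)^z\bigr),
\]
while the error contribution is
\[
\int_2^N O_z\!\left(\frac{(\log t)^{z-2}}{t}\right) dt = O_z\!\left((\log N)^{z-1}\right).
\]
The boundary term $A(N)/N = zG(z)(\log N)^{z-1} + O_z((\log N)^{z-2})$ is also $O_z((\log N)^{z-1})$, and $-G(z)(\log 2)^z$ plus the other $t \le 2$ constants are harmless, giving the stated expansion $G(z)(\log N)^z + O_z((\log N)^{z-1})$.

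For $z = 2$ the same scheme applies with the second estimate of Lemma \ref{z^Omega sum}, $A(t) \sim C_2 t (\log t)^2$. The analogous substitution yields
\[
\int_2^N \frac{C_2 (\log t)^2}{t}\, dt = \frac{C_2}{3}(\log N)^3 + O((\log N)^2),
\]
while $A(N)/N = O((\log N)^2)$ is of lower order, proving $\sum_{n \le N} 2^{\Omega(n)}/n \sim (C_2/3)(\log N)^3$.

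The only mildly delicate point is tracking the $z$-dependence of the implied constants uniformly; this is inherited directly from Lemma \ref{z^Omega sum}, so no genuinely new obstacle arises. The remainder is routine Abel-summation bookkeeping.
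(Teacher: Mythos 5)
Your route is the same as the paper's, which just says ``It follows from Lemma \ref{z^Omega sum} and integration by parts'': Abel summation from the asymptotic for $A(t)=\sum_{n\le t}z^{\Omega(n)}$. The $z=2$ computation and the case $1<z<2$ go through exactly as you describe.

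However, there is a genuine gap for $0<z\le 1$. You assert $\int_2^N O_z\bigl((\log t)^{z-2}/t\bigr)\,dt=O_z\bigl((\log N)^{z-1}\bigr)$, but for $z<1$ the exponent $z-2<-1$ makes the integral converge to a positive constant while $(\log N)^{z-1}\to 0$, and for $z=1$ the integral is $\asymp\log\log N$ while $(\log N)^{z-1}=1$. For the same reason the ``harmless'' constants $-G(z)(\log 2)^z$ and $\int_1^2 A(t)/t^2\,dt$ are \emph{not} absorbed by $O_z\bigl((\log N)^{z-1}\bigr)$ once $z<1$. So Abel summation from Lemma \ref{z^Omega sum} alone only gives $\sum_{n\le N}z^{\Omega(n)}/n = G(z)(\log N)^z + O_z\bigl(1+(\log N)^{z-1}\bigr)$, with an undetermined additive constant for $z<1$. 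Establishing that this constant actually vanishes (which the lemma asserts, since $(\log N)^{z-1}\to 0$) requires an extra ingredient, e.g.\ noting that the Dirichlet series $\sum_n z^{\Omega(n)} n^{-s-1}/s$ has a singularity at $s=0$ of the pure type $s^{-z-1}(1+c_1 s+\cdots)$, so the Hankel-contour expansion contains only powers $(\log N)^{z-j}$ and no constant term for non-integer $z$; or one can use a sharper version of Lemma \ref{z^Omega sum} with a two-term expansion. To be fair, the paper's one-line proof glosses over the same point, and every application of this lemma in the paper would survive with the weaker bound $O_z\bigl(1+(\log N)^{z-1}\bigr)$.
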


\begin{proof}
    It follows from Lemma \ref{z^Omega sum} and integration by parts.
\end{proof}

\begin{lemma}[Sathe-Selberg] \label{Selb}
For $k \le (2 - \delta) \log \log x$, 
\[
N_{k+1}(x) = G \left( \frac{k}{\log \log x} \right) \frac{x}{\log x} \frac{(\log \log x)^k}{k!} \left( 1 + O_{\delta}\left( \frac{k}{(\log \log x)^2} \right) \right),
\]
\end{lemma}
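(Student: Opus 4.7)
The statement is the classical Sathe--Selberg theorem, and the standard route is the Selberg--Delange method followed by a saddle-point coefficient extraction. I would start from the generating Dirichlet series $F(s,w) = \sum_{n \ge 1} w^{\Omega(n)}/n^s = \prod_p (1 - w/p^s)^{-1}$, absolutely convergent for $\operatorname{Re}(s) > 1$ and $|w| < 2$. Isolating the singularity at $s = 1$, one writes $F(s,w) = \zeta(s)^w H(s,w)$ where
\[
H(s,w) = \prod_p (1 - w/p^s)^{-1}(1 - 1/p^s)^w
\]
is holomorphic in $s$ on a half-plane $\operatorname{Re}(s) > 1/2$ and analytic in $w$ on $|w| < 2$; in particular $H(1,w) = \Gamma(1+w)\,G(w)$ in the paper's notation, so $H(1,w)/\Gamma(w) = w G(w)$.

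Next, apply Perron's formula to $S(x,w) \vcentcolon= \sum_{n \le x} w^{\Omega(n)}$ and deform the contour onto a Hankel contour around $s = 1$ of radius of order $1/\log x$. The standard Hankel integral computation (as in Tenenbaum, Chapter II.5) yields a full asymptotic expansion
\[
S(x,w) = x(\log x)^{w-1} \sum_{0 \le j \le J} \frac{\lambda_j(w)}{(\log x)^j} + O_{J,\delta}\!\left(x (\log x)^{\operatorname{Re}(w) - J - 2}\right),
\]
uniformly for $|w| \le 2 - \delta$, with $\lambda_0(w) = w G(w)$, recovering Lemma \ref{z^Omega sum}. Now extract $N_{k+1}(x) = [w^{k+1}] S(x,w)$ by Cauchy's formula along the circle $|w| = r$ with $r = k/\log\log x$: the hypothesis $k \le (2-\delta)\log\log x$ ensures $r \le 2 - \delta$, safely inside the domain of uniform analyticity.

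Finally, evaluate the coefficient integral by the saddle-point method. Writing $w = r e^{i\theta}$, the logarithm of $(\log x)^w/w^{k+1}$ has a critical point at $w = r$ with second derivative of order $\log\log x$, producing a Gaussian peak of angular width of order $1/\sqrt{k}$; combined with Stirling's formula this delivers the factor $(\log\log x)^k/k!$, while the slowly varying factor $H(1,w)/\Gamma(w) = w G(w)$ contributes its value at the saddle, $r G(r) \approx (k/\log\log x)\,G(k/\log\log x)$, with one power of $r$ cancelling against the $w^{-k-2}$ kernel. The main obstacle is making the Selberg--Delange expansion in the second step sufficiently uniform in $w$ as $|w| \to 2$: this is precisely where $H(s,w)$ and the Hankel-contour bounds degenerate, and is the source of the $\delta$-dependence in the implicit constant. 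A secondary technical point is tracking how the $(\log x)^{-j}$ corrections propagate through the coefficient extraction to produce the sharp relative error $O_\delta(k/(\log\log x)^2)$; this requires retaining the first subleading term $\lambda_1(w)$ together with a Taylor expansion of $G$ around $r$, rather than just the leading asymptotic.
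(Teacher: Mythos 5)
The paper does not prove this lemma; it simply cites Selberg's 1954 note and Tenenbaum (Chapter II.6, Theorem 6.5). Your sketch is a correct outline of the Selberg--Delange argument used in the Tenenbaum reference the paper points to: factoring $F(s,w)=\zeta(s)^wH(s,w)$, getting a uniform expansion of $\sum_{n\le x}w^{\Omega(n)}$ via a Hankel contour, and then extracting the $w^{k+1}$ coefficient by a saddle-point integral on $|w|=r\approx k/\log\log x$, with the constraint $k\le(2-\delta)\log\log x$ keeping $r$ away from the pole of $H$ at the Euler factor $p=2$ --- so you and the paper's reference take essentially the same route.
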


\begin{proof}
    See \cite{selberg1954note} or \cite[p. 304, Theorem 6.5]{Tenenbaum}.
\end{proof}

\begin{lemma} \label{z log log}
    Let $0 < z < 2$ and $k = [z \log \log N]$. Then
    \[
    \sum_{\substack{n \le N \\ \Omega(n) = k}} \frac{z^k}{n} \sim 
    \frac{G(z) (\log N)^z}{\sqrt{2 \pi z \log \log N}}. 
    \]
\end{lemma}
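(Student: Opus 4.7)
My plan is to use Abel's summation to reduce the sum to an integral involving $N_k(t)$, apply Sathe--Selberg (Lemma \ref{Selb}) in the range where it is valid, and then extract the main term through a change of variables $u = \log\log t$.

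Set $L \vcentcolon = \log\log N$, so $k = [zL]$. Abel's summation gives
\[
\sum_{\substack{n \le N \\ \Omega(n) = k}} \frac{1}{n} = \frac{N_k(N)}{N} + \int_{2}^{N} \frac{N_k(t)}{t^2}\, dt,
\]
and Sathe--Selberg shows the boundary term is smaller than the target by a factor $\asymp 1/\log N$. I would choose a fixed $c \in (z/2, 1)$ and split the integral at $t_0 \vcentcolon = \exp\exp(cL)$. For $t \in [2, t_0]$ the crude bound
\[
z^{k} \!\!\!\sum_{\substack{\Omega(n) = k \\ n \le t_0}} \frac{1}{n} \;\le\; \frac{1}{k!}\Bigl(z\!\sum_{p \le t_0}\!\frac{1}{p}\Bigr)^{\! k} \;\ll\; \frac{(czL)^{k}}{k!}
\]
together with Stirling yields $O\bigl(e^{zL(1+\log c)}/\sqrt{L}\bigr) = o\bigl((\log N)^{z}/\sqrt{L}\bigr)$, since $\log c < 0$.

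On the main range $t \in [t_0, N]$ the choice $c > z/2$ ensures $k-1 \le (2-\delta)\log\log t$ for some fixed $\delta > 0$, so Sathe--Selberg applies uniformly with relative error $O(1/L)$. The change of variables $u = \log\log t$ (so that $du = dt/(t \log t)$) transforms the integral into
\[
z^{k}\!\int_{t_0}^{N} \frac{N_k(t)}{t^{2}}\,dt \;\sim\; z^{k}\int_{cL}^{L} G\Bigl(\frac{k-1}{u}\Bigr)\,\frac{u^{\,k-1}}{(k-1)!}\,du.
\]
The integrand $u^{k-1}/(k-1)!$ is monotonically increasing, and writing $u = L - s$ one has $(u/L)^{k-1} \approx e^{-zs}$ for small $s$, so the integral is concentrated in a window of width $O(1/z)$ just below $L$. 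In this window $(k-1)/u = z + O(1/L)$, and by continuity of $G$ at $z$ we get
\[
z^{k}\int_{cL}^{L} G\Bigl(\frac{k-1}{u}\Bigr)\,\frac{u^{\,k-1}}{(k-1)!}\,du \;\sim\; G(z)\cdot\frac{(zL)^{k}}{k!}.
\]

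Finally, writing $k = zL - \theta$ with $\theta \in [0,1)$, Stirling gives $(zL/k)^{k} \to e^{\theta}$, so
\[
\frac{(zL)^{k}}{k!} \;\sim\; \frac{e^{k+\theta}}{\sqrt{2\pi k}} \;=\; \frac{e^{zL}}{\sqrt{2\pi z L}} \;=\; \frac{(\log N)^{z}}{\sqrt{2\pi z \log\log N}},
\]
matching the desired asymptotic. The main obstacle is making the two approximations --- replacing $G((k-1)/u)$ by the constant $G(z)$ and truncating the integral to a window around $u = L$ --- quantitative enough to beat the prefactor $1/\sqrt{L}$; this works because the Sathe--Selberg error is $O_\delta(1/L)$ throughout $[t_0, N]$ and because the $e^{-zs}$ decay of $(u/L)^{k-1}$ away from $u = L$ absorbs the tail on $[cL, L - \omega(1)]$.
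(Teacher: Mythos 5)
Your overall strategy is essentially the paper's: partial summation against $N_k(t)$, Sathe--Selberg (Lemma \ref{Selb}) on the range where $\log\log t \asymp \log\log N$, and Stirling with $k = zL-\theta$ to produce the factor $e^{zL}/\sqrt{2\pi zL}$ --- that last computation is literally the one in the paper. The step that is wrong as written is your crude bound on the range $t \le t_0$: the inequality $\sum_{\Omega(n)=k,\, n\le t_0} n^{-1} \le \frac{1}{k!}\bigl(\sum_{p\le t_0} p^{-1}\bigr)^{k}$ is false, because expanding the $k$-th power counts a non-squarefree $n$ with weight $k!/\prod_i \alpha_i! < k!$ (already for $k=2$ the left side contains $\sum_p p^{-2}$ with coefficient $1$, the right side only with coefficient $1/2$); the inequality is valid for $\omega(n)$ and squarefree $n$, but for $\Omega$ it goes in the wrong direction. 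The gap is local and easily repaired: drop the condition $\Omega(n)=k$ altogether and bound $z^{k}\sum_{\Omega(n)=k,\,n\le t_0} n^{-1} \le \sum_{n\le t_0} z^{\Omega(n)}/n \ll_z (\log t_0)^{z} = (\log N)^{cz}$, which is $o\bigl((\log N)^{z}/\sqrt{\log\log N}\bigr)$ since $c<1$ is fixed; this is exactly how the paper disposes of its own tail, via Lemma \ref{dens_whole}.

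A smaller structural difference: the paper cuts at $N^{(\log\log N)^{-1/z}}$, i.e.\ $\log\log t_0 = L - \frac{1}{z}\log L$, so on the main range $(k-1)/\log\log t = z + O(\log L/L)$ uniformly and $G$ may be replaced by $G(z)$ at once, with no concentration argument. With your fixed cut $c\in(z/2,1)$ you must argue, as you do, that the integrand $u^{k-1}$ concentrates near $u=L$ (decay $\approx e^{-zs}$ at $u=L-s$) while $G\bigl((k-1)/u\bigr)$ stays bounded because $(k-1)/u \le z/c + o(1) < 2$; this works, it is just extra effort. With the tail bound corrected as above, your proof goes through.
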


\begin{proof}
By Lemma \ref{Selb} uniformly in the range $N^{(\log \log N)^{-1/z}} \le x \le N$ we have
\[
N_k(x) \sim G(z) \frac{x}{\log x} \frac{(\log \log x)^{k-1}}{(k-1)!}.
\]
Let $z \log \log N = k + \varepsilon$. We know that $\varepsilon < 1$. Integrating by parts we obtain
\[
\sum_{\substack{N^{(\log \log N)^{-1/z}} < n \le N \\ \Omega(n) = k}} \frac{z^k}{n} \sim \frac{G(z) (z \log \log N)^k}{k!} = G(z) \left( \frac{z \log \log N}{k} \right)^k \frac{k^k}{k!} \sim
\]
\[
G(z) \left( \frac{k + \varepsilon}{k} \right)^k \frac{e^k}{\sqrt{2 \pi k}} \sim \frac{G(z) e^{k + \varepsilon}}{\sqrt{2 \pi k}} \sim \frac{G(z) (\log N)^z}{\sqrt{2 \pi z \log \log N}}.
\]
By Lemma \ref{dens_whole} 
\[
\sum_{\substack{n \le N^{(\log \log N)^{-1/z}} \\ \Omega(n) = k}} \frac{z^k}{n} \le \sum_{n \le N^{(\log \log N)^{-1/z}}} \frac{z^{\Omega(n)}}{n} \ll \frac{G(z) (\log N)^z}{\log \log N} = o\left(\frac{G(z) (\log N)^z}{\sqrt{2 \pi z \log \log N}} \right).
\]
\end{proof}

\begin{lemma} \label{Omega - omega}
    There exists a constant $C = C(z)$ such that
    \[
    \sum_{\substack{m \le N \\ \Omega(m) - \omega(m) \ge C \log x}} \frac{z^{\Omega(m)}}{m} = o \left( \frac{(\log N)^z}{x^{1/2}}\right)
    \]
\end{lemma}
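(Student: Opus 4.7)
The plan is to use the standard ``weight trick'' for isolating integers with many repeated prime factors. Introduce an auxiliary parameter $w > 1$ chosen so that $wz < 2$; this is possible since $z < 2$. On the set where $\Omega(m) - \omega(m) \ge C \log x$, the weight $w^{\Omega(m) - \omega(m)}$ is bounded below by $w^{C \log x} = x^{C \log w}$, so
\[
\sum_{\substack{m \le N \\ \Omega(m) - \omega(m) \ge C \log x}} \frac{z^{\Omega(m)}}{m} \le x^{-C \log w} \sum_{m \le N} \frac{z^{\Omega(m)} w^{\Omega(m) - \omega(m)}}{m}.
\]
It then suffices to prove that the unrestricted sum on the right is $\ll_z (\log N)^z$, after which choosing $C = C(z)$ so that $C \log w > 1/2$ closes out the bound.

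To estimate the unrestricted sum, I would factor each $m \le N$ uniquely as $m = kl$, where $k$ is squarefree, $l$ is squarefull, and $\gcd(k, l) = 1$. The key structural point is that $\Omega(m) - \omega(m) = \Omega(l) - \omega(l)$, so the weight depends only on $l$. The sum splits as the product of
\[
\sum_{\substack{k \le N \\ k \text{ squarefree}}} \frac{z^{\omega(k)}}{k} \ll \prod_{p \le N} \left(1 + \frac{z}{p}\right) \ll_z (\log N)^z
\]
(a Mertens-type estimate) and the Dirichlet series over squarefull integers
\[
\sum_{l \text{ squarefull}} \frac{z^{\Omega(l)} w^{\Omega(l) - \omega(l)}}{l} = \prod_{p} \left(1 + \sum_{j \ge 2} \frac{z^j w^{j-1}}{p^j}\right) = \prod_p \left(1 + \frac{z^2/p^2}{1 - wz/p}\right).
\]
This Euler product converges absolutely precisely because $wz < 2 \le p$ for every prime $p$, giving a constant $O_z(1)$.

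Putting these together yields
\[
\sum_{\substack{m \le N \\ \Omega(m) - \omega(m) \ge C \log x}} \frac{z^{\Omega(m)}}{m} \ll_z x^{-C \log w} (\log N)^z,
\]
and taking $C$ strictly larger than $1/(2 \log w)$ produces the claimed $o((\log N)^z / x^{1/2})$.

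The only real point requiring care is the simultaneous choice of $w$: we need $w > 1$ so that $C \log w$ is positive and we actually save a power of $x$, but also $wz < 2$ so that the squarefull Euler product converges. Both are compatible for every $z \in (0, 2)$, which is exactly the hypothesis. Everything else is routine; the decomposition $m = kl$ is what decouples the ``repeated-factor'' constraint (carried entirely by $l$) from the logarithmic growth in $N$ (carried entirely by the squarefree part $k$).
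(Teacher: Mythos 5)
Your argument is correct, and it takes a genuinely different route from the paper. The paper writes $m = 2^r m'$ with $m'$ odd and observes that $\Omega(m)-\omega(m)\ge C\log x$ forces either $2^{[(C/2)\log x]}\mid m$ (handled by pulling out $(z/2)^{(C/2)\log x}$, which decays since $z<2$) or $\Omega(m')-\omega(m')\ge (C/2)\log x$ (handled by noting $m$ is then divisible by a large odd square and bounding via $\bigl(\sum_{p>2} z^2/p^2\bigr)^{(C/8)\log x}\le 0.9^{(C/8)\log x}$), then chooses $C$ so both exponents beat $x^{-1/2}$. You instead run Rankin's trick with the weight $w^{\Omega(m)-\omega(m)}$ for $1<w<2/z$, and control the unrestricted weighted sum by the squarefree--squarefull factorization $m=kl$; the constraint is carried entirely by the squarefull Euler product, which converges exactly because $wz<2$. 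Both proofs use $z<2$ at the same essential point (the local factor at $p=2$), and both yield a power saving in $x$ uniform in $N$, which is what the application with $x\asymp\log\log N$ needs; your version is arguably cleaner and more systematic, while the paper's is a bare-hands split with explicit numerical constants and no auxiliary parameter. Two cosmetic corrections: the Euler factor should read $1+\frac{z^2w/p^2}{1-wz/p}$ (you dropped a factor $w$ from the numerator, harmless for convergence), and the sum over $m\le N$ does not literally ``split as the product'' --- it is bounded above by the product of the two sums once you drop the conditions $kl\le N$ and $(k,l)=1$, which is legitimate since all terms are nonnegative.
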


\begin{proof}
    Each number $m$ can be written in the form $m = 2^r m'$, where $m'$ is odd.
    Define the sets $M_1 = \{m \le N : 2^{[(C/2) \log x]}|m \}$, $M_2 = \{ m \le N : \Omega(m') - \omega(m') \ge (C/2) \log x\}$.
    
    Let $m$ be an integer for which $\Omega(m) - \omega(m) \ge C \log x$. Then $m \in M_1 \cup M_2$.

    \[
    \sum_{m \in M_1} \frac{z^{\Omega(m)}}{m} \le (z/2)^{(C/2)\log x - 1} \sum_{n \le N} \frac{z^{\Omega(n)}}{n} \ll x^{(C/2) \log(z/2)} (\log N)^z.
    \]
    Each element $m \in M_2$ is divisible by an odd square $m_2$ such that $\Omega(m_2) - \omega(m_2) \ge (C/8) \log x$. Hence
    \[
    \sum_{m \in M_2} \frac{z^{\Omega(m)}}{m} \le \left( 
    \sum_{p > 2} \frac{z^2}{p^2} \right)^{(C/8)\log x} 
    \sum_{n \le N} \frac{z^{\Omega(n)}}{n} \le \
    \left( \sum_{p > 2} \frac{4}{p^2} \right)^{(C/8)\log x} 
    \sum_{n \le N} \frac{z^{\Omega(n)}}{n} \le
    \]
    \[
    0.9^{(C/8)\log x} \sum_{n \le N} \frac{z^{\Omega(n)}}{n} \ll
    x^{\log(0.9) (C/8)} (\log N)^z.
    \]
    Now we fix $\varepsilon > 0$ and choose $C > 0$ such that $\max(\log(0.9) (C/8), (C/2) \log(z/2)) < -1/2 - \varepsilon$.
\end{proof}

\begin{lemma} \label{Sark}
    If $f$ is a completely multiplicative function such that  there is a prime $p$ with $f(p) > 1$, then there are numbers $C = C(p, f(p)) > 0$, $N_0 = N_0(p)$  that for every $N > N_0$ there exists a primitive $A \subset \{1, 2, \ldots, N \}$ such that
    \[
    \delta(f, A, N) \ge C.
    \]
    One can take $N_0 = p^2$ and
    \[
    C = \frac{f(p) - 1}{f(p) \left( \frac{\log p^3}{\log 2} \right)}.
    \]
\end{lemma}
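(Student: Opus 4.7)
The plan is to combine a geometric-sum pigeonhole, which uses $f(p) > 1$ to concentrate the $f$-mass of $\{1,\ldots,N\}$ near $N$, with a dyadic pigeonhole, which extracts from this concentrated region a primitive set. For $k \ge 0$ I would define the shell
\[
J_k \vcentcolon= (N/p^{k+1},\, N/p^k] \cap \mathbb{N}, \qquad T_k \vcentcolon= \sum_{n \in J_k} f(n),
\]
so that the $J_k$ partition $\{1,\ldots,N\}$ (all but finitely many are empty) and $\sum_k T_k = M \vcentcolon= \sum_{n \le N} f(n)$.

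The key observation is that the map $n \mapsto pn$ injects $J_{k+1}$ into $J_k$ and satisfies $f(pn) = f(p)\,f(n)$. This gives $f(p)\,T_{k+1} \le T_k$, and iterating yields $T_k \le f(p)^{-k} T_0$. Summing the geometric series produces
\[
M \;=\; \sum_{k \ge 0} T_k \;\le\; T_0 \sum_{k \ge 0} f(p)^{-k} \;=\; T_0 \cdot \frac{f(p)}{f(p)-1},
\qquad\text{so}\qquad
T_0 \;\ge\; M \cdot \frac{f(p)-1}{f(p)}.
\]

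Next I would cover the ``top shell'' $(N/p, N]$ by dyadic blocks $I_k \vcentcolon= (N/2^{k+1},\, N/2^k]$ for $k = 0, 1, \ldots, K-1$, where $K \vcentcolon= \lceil \log_2 p \rceil$; this works because $2^K \ge p$ forces $N/2^K \le N/p$. Each $I_k$ is primitive: if $a \mid b$ with $a, b \in I_k$ and $a \ne b$, then $b \ge 2a > N/2^k \ge b$, a contradiction. Since $T_0 \le \sum_{k < K} \sum_{n \in I_k} f(n)$, pigeonhole produces some $k^*$ with $\sum_{n \in I_{k^*}} f(n) \ge T_0/K$; take $A \vcentcolon= I_{k^*}$, which gives
\[
\delta(f, A, N) \;\ge\; \frac{T_0}{K \cdot M} \;\ge\; \frac{f(p)-1}{f(p)\,K}.
\]
For $p \ge 2$ one has $K = \lceil \log_2 p \rceil \le \log_2 p + 1 \le 3\log_2 p = (\log p^3)/(\log 2)$, yielding exactly the claimed constant. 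The hypothesis $N_0 = p^2$ is comfortably enough to guarantee that every $I_k$ with $k < K$ contains an integer (so the pigeonhole output is a nonempty primitive set). Neither step presents a serious obstacle: the whole argument is a clean two-level pigeonhole, and the only thing to keep track of carefully is the choice of $K$ so that the dyadic cover fits inside the geometric shell.
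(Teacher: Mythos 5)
Your argument is correct, and it is worth pointing out that the paper itself gives no proof here: it only cites S\'ark\"ozy (Theorem 4 and its proof). What you have written is a clean, self-contained reconstruction, and the fact that you land on exactly the constant $\frac{f(p)-1}{f(p)\,\frac{\log p^3}{\log 2}}$ (the factor $3\log_2 p$ coming from $\lceil \log_2 p\rceil \le \log_2 p + 1 \le 3\log_2 p$) strongly suggests it is the same two-level pigeonhole S\'ark\"ozy used. The geometric-shell step ($T_k \ge f(p)\,T_{k+1}$ via $n \mapsto pn$, giving $T_0 \ge M\frac{f(p)-1}{f(p)}$) and the dyadic-cover step (each $(X,2X]$ is primitive; $K=\lceil\log_2 p\rceil$ of them cover $(N/p,N]$) are both sound. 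Two small remarks: the argument tacitly uses $f \ge 0$ in both inequalities (dropping non-image terms, and bounding $T_0$ by the dyadic cover), which is implicit in the lemma since $\delta(f,A,N)$ is only meaningful there; and the condition $N>p^2$ is not actually needed for your version, since $T_0 \ge M\frac{f(p)-1}{f(p)}>0$ already forces the pigeonhole block $I_{k^*}$ to be nonempty, so you prove slightly more than stated.
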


\begin{proof}
    See \cite[Theorem 4, Proof of Theorem 4]{Sarkozy}.
\end{proof}

\begin{proof}[Proof of Theorem \ref{D_z}A and \ref{D_z}C]
    Theorem \ref{D_z}C follows directly from Lemma \ref{Sark}, because if $z > 2$, then $f(2) = z/2 > 1$.

    Now we prove Theorem \ref{D_z}A following the proof in \cite{Szemeredi}.

    Let $k = [z \log \log N]$.
    
    In view of the Lemma \ref{z log log} it is enough to show that for each primitive set $A \subset \{1, 2, \ldots, N \}$
    \[
    \sum_{a \in A} \frac{z^{\Omega(a)}}{n} \le (1 + o(1)) \sum_{\substack{n \le N \\ \Omega(n) = k}} \frac{z^k}{n}.
    \]
    We can assume that $A$ consists of the elements $a$ such that $\Omega(a) - \omega(a) < C \log k$ as the sum over other elements is small due to Lemma \ref{Omega - omega}.
    
    Let us divide $A$ into three disjoint subsets $A = A' \cup A'' \cup A'''$. 
    \[
    A' = \{a \in A : \Omega(a) > k\}, \quad 
    A'' = \{a \in A : \Omega(a) = k\}, \quad 
    A''' = \{a \in A : \Omega(a) < k\}.
    \]
    Let $r = \max\{\Omega(a) : a \in A'\}$. Let $A_{(r)} = \{ a \in A : \Omega(a) = r\}$. Let $B'_{(r)}$ be the set of all divisors of elements of $A_{(r)}$ with $r-1$ prime divisors counted with multiplicity. Note that $B'_{(r)}$ and $A$ are disjoint.

    We replace $A$ with the new primitive set $(A \setminus A_{(r)}) \cup B'_{(r)}$. We repeat this process until $A' = \varnothing$. Let us denote the resulting $A$ as $A_{new}$ and the $A$ from which we started as $A_{old}$. Let us evaluate, how the sum $\sum_{a \in A} \frac{z^{\Omega(a)}}{n}$ changes during this process.

    \[
    \sum_{a \in B'_{(r)}} \frac{z^{\Omega(a)}}{a} \left( \sum_{p \le N} \frac{z}{p} \right) \ge 
    \sum_{a \in A_{(r)}} \omega(a) \frac{z^{\Omega(a)}}{a} \ge (r - C \log k) \sum_{a \in A_{(r)}} 
    \frac{z^{\Omega(a)}}{a}.
    \]
    Using the second theorem of Mertens $\sum_{p \le N}  p^{-1} < \log \log N + c$, we obtain
    \[
    \sum_{a \in B'_{(r)}} \frac{z^{\Omega(a)}}{a} \ge \frac{r - C \log k}{z \log \log N + c'} \sum_{a \in A_{(r)}} 
    \frac{z^{\Omega(a)}}{a}.
    \]
    If $r > k + 2 C \log k$, then 
    \[
    \frac{r - C \log k}{z \log \log N + c'} > 1
    \]
    And for $r > k$ we have
    \[
    \frac{r - C \log k}{z \log \log N + c'} > 1 - \frac{2C \log k}{k}.
    \]
    By the induction argument 
    \[
    \sum_{a \in A_{new}} \frac{z^{\Omega(a)}}{a} \ge \left( 1 - \frac{2C \log k}{k} \right)^{2 C \log k} \sum_{a \in A_{old}} \frac{z^{\Omega(a)}}{a} = (1 + o(1)) \sum_{a \in A_{old}} \frac{z^{\Omega(a)}}{a}.
    \]

    So we can assume that $A' = \varnothing$. 
    
    Now let $r = \min \{ \Omega(a) : a \in A'''\}$. Let $B'''_{(r)}$ be the set of all numbers of the form $p a_r$, where  $a_r \in A_{(r)}$ and $p < N^{1 / k^2}$. Again $B'''_{(r)}$ and $A$ are disjoint. And we replace $A$ with the new primitive set $(A \setminus A_{(r)}) \cup B'''_{(r)}$. Note that the new $A$ may have elements larger than $N$. We repeat this process until $A''' = \varnothing$. Again we denote the resulting $A$ as $A_{new}$ and the $A$ from which we started as $A_{old}$.

    $A_{new}$ will be a subset of $[1, N^{1 + 1/k}]$. We have
    \[
    \sum_{a \in A_{(r)}} \frac{z^{\Omega(a)}}{a} \left(\sum_{p < N^{1/k^2}} \frac{z}{p} \right) \le (r + 1) \sum_{a \in B'''_{(r)}} \frac{z^{\Omega(a)}}{a}
    \]
    
    Using the second theorem of Mertens we obtain 
    \[
    \sum_{a \in B'''_{(r)}} \frac{z^{\Omega(a)}}{a} \ge \frac{k - 3 \log k}{r + 1} \sum_{a \in A_{(r)}} \frac{z^{\Omega(a)}}{a}.
    \]
    If $r + 1 \le k - 3 \log k$ we have
    \[
    \frac{k - 3 \log k}{r + 1} \ge 1.
    \]
    Since $r + 1 \le k$, we always have 
    \[
    \frac{k - 3 \log k}{r + 1} \ge 1 - \frac{3 \log k}{k}.
    \]
    And we obtain
    \[
    \sum_{a \in A_{new}} \frac{z^{\Omega(a)}}{a} \ge \left( 1 - \frac{3 \log k}{k} \right)^{3 \log k} \sum_{a \in A_{old}} \frac{z^{\Omega(a)}}{a} = (1 + o(1)) \sum_{a \in A_{old}} \frac{z^{\Omega(a)}}{a}.
    \]
    We have
    \[
    \sum_{a \in A_{new}} \frac{z^{\Omega(a)}}{a} \le \sum_{a \in A_{new} \cap [1, N]} \frac{z^{\Omega(a)}}{a} + \sum_{n = N+1}^{N^{1 + 1/k}} \frac{z^{\Omega(n)}}{n}.
    \]
    By Lemma \ref{dens_whole}
    \[
    \sum_{n = N+1}^{N^{1 + 1/k}} \frac{z^{\Omega(n)}}{n} \ll \left(\left(1 + \frac{1}{k} \right)^z - 1\right) (\log N)^z + (\log N)^{z-1} \ll \frac{(\log N)^z}{k} = o\left(\sum_{\substack{n \le N \\ \Omega(n) = k}} \frac{z^k}{n} \right).
    \]
    Now $A_{new} \cap [1, N]$ is a subset of $\{ n \le N : \Omega(n) = k \}$.
    
    Hence for an arbitrary primitive set $A \subset \{1, 2, \ldots, N \}$
    \[
    \sum_{a \in A} \frac{z^{\Omega(a)}}{n} \le (1 + o(1)) \sum_{\substack{n \le N \\ \Omega(n) = k}} \frac{z^k}{n}.
    \]
\end{proof}

\begin{lemma} \label{1/n asymp}
For $k \ge 1$ and $x \ge 2^{k+2}$
    \[
    \sum_{\substack{n \le x \\ \Omega(n) = k}} \frac{2^k}{n} \asymp \sum_{1 \le j \le k} \frac{\left(2 \log \log \frac{x}{2^k}\right)^j}{j!}.
    \]
\end{lemma}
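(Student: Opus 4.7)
My plan is to bracket the sum via Abel summation combined with the asymptotic of Proposition \ref{BDN}, reducing to a computable integral. Let $L := \log(x/2^k)$ and $u := \log\log(x/2^k)$; the hypothesis $x \ge 2^{k+2}$ forces $L \ge 2\log 2$, hence $u \ge w_0 := \log(2\log 2) > 0$.

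First I would apply partial summation to write
\[
\sum_{\substack{n \le x \\ \Omega(n) = k}} \frac{2^k}{n} = \frac{2^k N_k(x)}{x} + 2^k \int_{2^k}^{x} \frac{N_k(t)}{t^2}\, dt.
\]
The piece of the integral over $[2^k, 2^{k+2}]$ contributes $O(1)$ after multiplying by $2^k$, since $N_k(2^{k+2})$ is bounded by an absolute constant (only $n = 2^{k-j}m$ with $m$ odd and $m \le 2^{j+2}$ appear, a finite list); this is absorbed by the main term, which is bounded below by $2u \ge 2w_0 > 0$. On the range $t \ge 2^{k+2}$, insert Proposition \ref{BDN} for $N_k(t)$ and change variables $\tau := \log(t/2^k)$, then $w := \log \tau$. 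The prefactor $(2-\rho(w))\,G(\rho(w))$, with $\rho(w) = \min(2, (k-1)/w)$, extends continuously to $[0,2]$ (the simple pole of $G$ at $2$ cancels the factor $2-\rho$) and is bounded above and below by positive absolute constants. This yields
\[
2^k \int_{2^{k+2}}^{x} \frac{N_k(t)}{t^2}\, dt \asymp \int_{w_0}^{u} \sum_{0 \le j < k} \frac{(2w)^j}{j!}\, dw.
\]

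Interchanging sum and integral and computing $\int_{w_0}^u (2w)^j\, dw = [(2u)^{j+1} - (2w_0)^{j+1}]/[2(j+1)]$ yields
\[
\tfrac{1}{2} \sum_{l=1}^{k} \frac{(2u)^l - (2w_0)^l}{l!} \asymp \sum_{l=1}^{k} \frac{(2u)^l}{l!},
\]
the last $\asymp$ following by splitting cases: either $u \ge 2w_0$, in which case each term satisfies $(2u)^l - (2w_0)^l \ge (2u)^l/2$; or $u \in [w_0, 2w_0]$ is bounded, in which case both sides are bounded above and below by absolute constants. Finally, the boundary contribution $2^k N_k(x)/x$ is (by the same application of Proposition \ref{BDN}) $\asymp (2-\rho)G(\rho)\, L^{-1} \sum_{0 \le j < k}(2u)^j/j!$, smaller by a factor of $L = e^u$ than the integral and hence dominated.

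The main obstacle is the uniformity of Proposition \ref{BDN} in $k$: it is stated there as a pointwise asymptotic, so a clean $\asymp$ with absolute constants requires either invoking a uniform-in-$k$ refinement (as in Hwang's work) or the alternative route through the decomposition $n = 2^{k-j}m$ with $m$ odd, estimating $T_j(x/2^{k-j}) := \sum_{m \le x/2^{k-j},\, (m,2)=1,\, \Omega(m)=j} 1/m$ from above by Mertens' theorem and from below by Sathe-Selberg for odd integers (Lemma \ref{Selb}), then summing over $j$ and handling the near-critical range $j \asymp \min(k, 2u)$ by hand, which is precisely where the main contribution to $\sum_{j=1}^k (2u)^j/j!$ concentrates.
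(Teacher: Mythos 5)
Your route is essentially the paper's: partial summation against $N_k(t)$, inserting Proposition \ref{BDN} after observing that $(2-\rho)G(\rho)$ is continuous and bounded above and below by positive constants on $[0,2]$, then the change of variables $w=\log\log(t/2^k)$ and an explicit evaluation of the resulting integral. The uniformity-in-$k$ issue you flag is not resolved any differently in the paper: it simply reads the cited results as giving $N_k(t)\asymp \frac{t}{2^k}\bigl(\log\frac{t}{2^k}\bigr)^{-1}\sum_{0\le j<k}\frac{(2\log\log\frac{t}{2^k})^j}{j!}$ uniformly for $t\ge 3\cdot 2^k$, so on that point you are in the same position as the author.

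There is, however, one step that fails as written. You claim
\[
\frac12\sum_{l=1}^{k}\frac{(2u)^l-(2w_0)^l}{l!}\asymp\sum_{l=1}^{k}\frac{(2u)^l}{l!}
\]
with the case $u\in[w_0,2w_0]$ justified by ``both sides are bounded above and below by absolute constants.'' Your lower integration limit $w_0=\log(2\log 2)=\log\log 4$ is exactly the minimal admissible value of $u$ (attained at $x=2^{k+2}$), so the left-hand side can be arbitrarily small, indeed zero, while the right-hand side is $\ge 2w_0>0$; the two-sided bound is false near $u=w_0$. The lower bound in this bounded regime has to come from the pieces you discarded: the single term $n=2^k$ already contributes $1$ to the sum (equivalently, $2^k\int_{2^k}^{2^{k+2}}N_k(t)t^{-2}\,dt\ge 1/2$), and since $\sum_{l\le k}(2u)^l/l!$ is bounded when $u\le 2w_0$, this alone gives the required $\gg$. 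Alternatively, do what the paper does: start the integral at $3\cdot 2^k$, so its lower limit $\log\log 3$ is strictly below $\log\log 4\le u$, and then $(2u)^l-(2\log\log 3)^l\gg(2u)^l$ termwise for all admissible $u$, with no case split needed. With that one-line repair your argument is complete and coincides with the paper's proof.
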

\begin{proof}
    Note that $(2 - \rho) G(\rho) \asymp 1$ in the range $\rho \in [0, 2)$. Hence by Proposition \ref{BDN} uniformly in the range $x \ge 3 \cdot 2^k$
    \[
    N_k(x) \asymp \frac{x}{2^k} \left( \log \frac{x}{2^k} \right)^{-1} \sum_{0 \le j < k} \frac{\left(2 \log \log \frac{x}{2^k} \right)^j}{j!}.
    \]
    Integrating by parts we obtain
    \[
    \sum_{\substack{n \le x \\ \Omega(n) = k}} \frac{2^k}{n} \asymp 1 + \left( \log \frac{x}{2^k} \right)^{-1} \sum_{0 \le j < k} \frac{\left(2 \log \log \frac{x}{2^k} \right)^j}{j!} 
    + \int_{3 \cdot 2^k}^x \sum_{0 \le j < k} \frac{\left(2 \log \log \frac{t}{2^k} \right)^j}{j! \, \, t \log \frac{t}{2^k}} dt.
    \]
    \[
    \int_{3 \cdot 2^k}^x \sum_{0 \le j < k} \frac{\left(2 \log \log \frac{t}{2^k} \right)^j}{j! \, \, t \log \frac{t}{2^k}} dt = 
    \int_3^{x / 2^k} \sum_{0 \le j < k} \frac{\left(2 \log \log r \right)^j}{j! \, \, r \log r} dr =
    \]
    \[
    \int_{\log \log 3}^{\log \log \frac{x}{2^k}} 
    \sum_{0 \le j < k} \frac{(2 y)^j}{j!} \, dy 
    \asymp \sum_{1 \le j \le k} \frac{(2 \log \log \frac{x}{2^k})^j}{j!}.
    \]
\end{proof}

\begin{lemma} \label{poisson tails}
    Let $\alpha \ge 1$ and $t > 0$, then
    \[
    \sum_{0 \le j < \alpha t} \frac{t^j}{j!} \ge e^{t} - e^{t(\alpha - \alpha \log \alpha)}
    \]
\end{lemma}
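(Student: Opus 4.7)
The plan is to recognize the statement as a Chernoff-type upper tail bound for a Poisson random variable. Rewrite the inequality in its equivalent form
\[
\sum_{j \ge \alpha t} \frac{t^j}{j!} \le e^{t(\alpha - \alpha \log \alpha)},
\]
obtained from $e^t = \sum_{j \ge 0} t^j/j!$. The sum on the left is $e^t$ times $\Pr(X \ge \alpha t)$ for $X \sim \mathrm{Poisson}(t)$, so this is exactly the standard Chernoff bound for the upper Poisson tail.

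The concrete steps I would carry out are: first, for any $\lambda > 0$, use the elementary monotonicity trick
\[
\sum_{j \ge \alpha t} \frac{t^j}{j!} \le \sum_{j \ge 0} \frac{t^j}{j!} \, e^{\lambda(j - \alpha t)} = e^{-\lambda \alpha t}\, e^{t e^{\lambda}},
\]
which is valid term-by-term because $e^{\lambda(j-\alpha t)} \ge 1$ on the range $j \ge \alpha t$ and all terms are nonnegative. Second, choose $\lambda = \log \alpha$, which is permissible since $\alpha \ge 1$ guarantees $\lambda \ge 0$; substituting gives $e^\lambda = \alpha$ and the exponent collapses to $-\alpha t \log \alpha + \alpha t = t(\alpha - \alpha \log \alpha)$. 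Third, rearrange to recover the stated inequality.

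There is essentially no obstacle: the choice $\lambda = \log\alpha$ is the minimizer of the upper bound (one can verify by differentiating $-\lambda \alpha t + t e^{\lambda}$), and the boundary case $\alpha = 1$ degenerates to the trivial bound $\sum_{j \ge t} t^j/j! \le e^t$. The only thing worth flagging is that the argument does not require $\alpha t$ to be an integer: the term-by-term comparison is insensitive to whether the index of summation hits $\alpha t$ exactly, and the inequality $e^{\lambda(j-\alpha t)} \ge 1$ holds throughout $j \ge \lceil \alpha t \rceil$.
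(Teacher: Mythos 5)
Your proof is correct, and it is exactly the standard Chernoff/large-deviation argument for the upper tail of a Poisson distribution. The paper itself gives no proof but simply cites a reference (Ford, Proposition 0.3), which records precisely this bound; so what you have written is the proof the paper chose to outsource. The steps — rewrite the claim as a tail bound, tilt by $e^{\lambda j}$ with $\lambda \ge 0$, sum the exponential series, and optimize at $\lambda = \log \alpha$ — are all sound, and your remark that $\alpha t$ need not be an integer is a correct and worthwhile clarification.
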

\begin{proof}
    See, for example, \cite[Proposition 0.3]{Ford}.
\end{proof}

\begin{proof}[Proposition \ref{improvement} implies Theorem \ref{D_z}B]
    \,
    
    By Lemma \ref{dens_whole}
    \[
    D_2(N) \ll (\log N)^{-3} \max_{A \,\, \text{primitive}} \sum_{\substack{n \in A \\ n \le N}} \frac{ 2^{\Omega(a)}}{a}.
    \]
    Hence Proposition \ref{improvement} implies that $D_2(N) \ll (\log N)^{-1}$.

    Now let us prove that $D_2(N) \gg (\log N)^{-1}$. It is enough to find such primitive $A \subset \{1, 2, \ldots, N \}$ that
    \[
    \sum_{a \in A} \frac{2^{\Omega(a)}}{a} \gg (\log N)^2.
    \]
    Let us fix any constant $d > 2$. Take $k = [d \log \log N]$ and $A = \mathbb{P}_k \cap [1, N]$. Then Lemmas \ref{1/n asymp} and \ref{poisson tails} give
    \[
    \sum_{\substack{n \le N \\ \Omega(n) = k}} \frac{2^k}{n} \asymp \sum_{1 \le j \le k} \frac{(2 \log \log \frac{N}{2^k})^j}{j!} \gg_d (\log N)^2.
    \]
    
\end{proof}

\section{Proof of Proposition \ref{improvement} and Theorem \ref{N_k-constant}} \label{Const}

First we prove Proposition \ref{improvement} with explicit constant. For that we need
\begin{lemma} \label{286}
    For $x \ge 286$
    \[
    \prod_{p \le x} (1 - 1/p)^{-1} \le  e^{\gamma} (\log x) \left( 1 + \frac{2}{\log^2 x} \right),
    \]
    here $\gamma$ is the Euler–Mascheroni constant.
\end{lemma}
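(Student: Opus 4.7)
The plan is to invoke the classical explicit form of Mertens' third theorem due to Rosser and Schoenfeld, which gives a much sharper statement than what is needed here. Specifically, Rosser and Schoenfeld proved in their 1962 paper \emph{Approximate formulas for some functions of prime numbers} (Theorem 8) that for $x \ge 285$,
\[
\prod_{p \le x} \left(1 - \frac{1}{p}\right)^{-1} < e^{\gamma} (\log x) \left(1 + \frac{1}{2 \log^2 x}\right).
\]
Since $1/2 < 2$, the lemma follows immediately from this bound for any $x \ge 286 > 285$. So the proof is essentially a one-line citation.

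If one wishes to avoid citing an external reference, the alternative plan is to proceed directly as Rosser and Schoenfeld did: start from their explicit bound on Chebyshev's $\theta(x) = \sum_{p \le x} \log p$, namely $|\theta(x) - x| \le c x / \log^k x$ for an effective constant, then apply partial summation to pass to $\sum_{p \le x} (\log p)/p$ and then to $\sum_{p \le x} 1/p$, obtaining Mertens' estimate with an explicit error term. From there one writes
\[
\log \prod_{p \le x} \left(1 - \frac{1}{p}\right)^{-1} = \sum_{p \le x} \frac{1}{p} + \sum_{p \le x} \sum_{k \ge 2} \frac{1}{k p^k},
\]
uses the identity $\gamma = \sum_{p} \bigl(\log(1 - 1/p)^{-1} - 1/p\bigr) - \sum_{p \le x} \log(1 - 1/p)^{-1} + \sum_{p \le x} 1/p$ in its symbolic form, and exponentiates. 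The main obstacle in this direct approach is the verification of the numerical threshold $x \ge 286$, which requires either computer-assisted numerical checking for small $x$ or careful bookkeeping of constants in the partial summation step; the asymptotic shape with exponent $2$ (rather than $1/2$) gives plenty of slack, so the constants are not delicate once $x$ is moderately large.

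Given that the subsequent proof of Theorem \ref{N_k-constant} only needs this inequality as a convenient explicit form, the cleanest approach is the first one: simply cite Rosser--Schoenfeld and observe that the coefficient $2$ in place of $1/2$ allows us to state the bound in a slightly weaker but sufficient form valid on the same range.
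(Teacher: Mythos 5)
Your proposal is correct and takes essentially the same approach as the paper: the paper's proof is exactly the one-line citation to Rosser--Schoenfeld, \emph{Approximate formulas for some functions of prime numbers}, Theorem 8. The coefficient $2$ (instead of the tighter $1/2$ available from that theorem) and the threshold $286$ (instead of $285$) are deliberate slack, as you observed.
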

\begin{proof}
    See \cite[Theorem 8]{approx}.
\end{proof}

\begin{lemma} \label{improvement_const}
Let $A$ be a primitive set and $N \ge 2$. Then
    \[
    \sum_{
    \substack{a \in A \\ P(a) \le N}} \frac{2^{\Omega(a)}}{a} \le  2.486 \, (\log N)^2.
    \]
\end{lemma}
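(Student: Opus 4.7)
By Proposition \ref{improvement}, it suffices to prove the purely numerical inequality
\[
\prod_{2 < p \le N}\left(1 - \frac{2}{p}\right)^{-1} \le 2.486\,(\log N)^2 \qquad (N \ge 2).
\]
The constant $2.486$ is forced by the case $N = 3$: there the product equals $3$ and $3/(\log 3)^{2} = 2.4860\ldots$. So the inequality is essentially an equality at $N=3$, and for all other $N$ one only has to show enough slack.

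\textbf{The range $N \ge 286$.} I would use the algebraic identity
\[
\left(1 - \frac{2}{p}\right) = \left(1 - \frac{1}{p}\right)^{2}\left(1 - \frac{1}{(p-1)^{2}}\right) \qquad (p \ge 3),
\]
which yields
\[
\prod_{2 < p \le N}\left(1 - \frac{2}{p}\right)^{-1} = \frac{1}{4}\prod_{p \le N}\left(1 - \frac{1}{p}\right)^{-2}\prod_{2 < p \le N}\left(1 + \frac{1}{p(p-2)}\right).
\]
Lemma \ref{286} bounds the first product by $e^{2\gamma}(\log N)^{2}(1+2/(\log N)^{2})^{2}$ for $N \ge 286$, while the second is bounded by its infinite limit $T := \prod_{p\ge 3}(1+1/(p(p-2)))$, the reciprocal of the twin-prime constant (numerically $T < 1.52$, and a rigorous upper bound is obtained by computing $\prod_{3 \le p \le P_0}$ exactly and controlling the tail by $\exp\sum_{p>P_0}1/(p(p-2))$). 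Collecting the constants gives $\tfrac{e^{2\gamma}}{4}T \approx 1.21$; the $N$-dependent slack $(1+2/(\log N)^{2})^2 \le 1.13$ at $N = 286$ brings the whole expression below $1.40\,(\log N)^{2}$, well under $2.486\,(\log N)^{2}$.

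\textbf{The range $2 \le N < 286$ and the main obstacle.} Here the bound is checked directly. Since $\prod_{2 < p \le N}(1-2/p)^{-1}$ is constant on each interval $[p_k, p_{k+1})$ between consecutive primes while $(\log N)^{2}$ is increasing, it suffices to verify the inequality at $N = p_{k}$ for the finite list of odd primes $p_{k} \le 283$. The tightest case, by construction, is $N = 3$; the slack then grows quickly, e.g.\ $N = 5$ gives $5$ vs.\ $\approx 6.44$, $N = 7$ gives $7$ vs.\ $\approx 9.42$, and so on monotonically up to the cutoff. Mathematically nothing is deep --- both the identity and the Mertens bound are standard --- so the main obstacle is the bookkeeping: picking a cutoff (here $N = 286$) so that the Mertens-based estimate is both applicable and good enough, and confirming by finite computation that no prime in $[3, 283]$ violates the stated constant.
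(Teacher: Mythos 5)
Your proposal is correct and follows essentially the same route as the paper: reduce to the product bound $\prod_{2<p\le N}(1-2/p)^{-1}$ via Proposition \ref{improvement}, rewrite it as $\tfrac14\prod_{p\le N}(1-1/p)^{-2}\prod_{2<p\le N}\bigl(1-\tfrac{1}{(p-1)^2}\bigr)^{-1}$, apply Lemma \ref{286} for $N\ge 286$, and verify the finitely many smaller $N$ (where $N=3$ is indeed the case forcing the constant $2.486$) by direct computation. The only cosmetic remark is that your finite check should also include the trivial endpoint $N=2$, where the empty product $1\le 2.486(\log 2)^2$ is exactly the case the paper singles out.
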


\begin{proof}[Proof of Lemma \ref{improvement_const} and Proposition \ref{improvement}]
    Let $p_1, p_2, \ldots$ be the odd primes in increasing order. For some $r$ we have $p_r \le N < p_{r+1}$.
    Let us set $p_1 \prec p_2 \prec \ldots \prec p_r \prec 2 \prec p_{r+1} \prec p_{r+2} \prec \ldots$. Let $f(n) = \frac{2^{\Omega(n)}}{n}$. Theorem \ref{main} gives
    \[
    \sum_{\substack{a \in A \\ P(a) \le N \\ a \,\, \text{odd}}} \frac{2^{\Omega(a)}}{a} \prod_{2 < p < P(a)} (1 - 2/p) + \sum_{\substack{a \in A \\ P(a) < N \\ a \,\, \text{even}}} \frac{2^{\Omega(a)}}{a} \prod_{2 < p \le N} (1 - 2/p) \le 1.
    \]
    Hence
    \[
    \sum_{\substack{a \in A \\ P(a) \le N}} \frac{2^{\Omega(a)}}{a} \le \prod_{2 < p \le N} (1 - 2/p)^{-1} \ll (\log N)^2.
    \]
    This proves Proposition \ref{improvement}.

    Now assume that $N \ge 286$. Then by Lemma \ref{286}
    \[
    \prod_{2 < p \le N} (1 - 2/p)^{-1} = (1/4) \prod_{p \le N} (1 - 1/p)^{-2} \prod_{2 < p \le N} \frac{(1 - 1/p)^2}{1 - 2/p} \le 
    \]
    \[
    (1/4) e^{2 \gamma}  \left( 1 + \frac{2}{\log^2 286} \right)^2 \prod_{p > 2} \left( 1 - \frac{1}{(p-1)^2} \right)^{-1} (\log N)^2 \le 2.486 (\log N)^2.
    \]
    For smaller $N$ the inequality $\prod_{2 < p \le N} (1 - 2/p)^{-1} \le 2.486 (\log N)^2$ can be checked explicitly. In particular $2.486 (\log 2)^2 > 1$.
\end{proof}

    \begin{lemma} \label{c}
       For $Z, A > 0$ denote 
       \[
       G(Z, A) \vcentcolon = \sum_{\substack{n \le Z \\ \omega(n) \ge A}} 1.
       \]
       Then 
       \[
       G(Z, A) \le c \, 2^{-A} Z \log(Z + 2).
       \]
       One can take $c = 1.123$.
    \end{lemma}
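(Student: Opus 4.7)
The plan is to first reduce to an $A$-free estimate by Markov's inequality in base $2$. Since $\omega(n)$ is a non-negative integer, $\mathbf{1}[\omega(n)\ge A]\le 2^{\omega(n)-A}$ holds pointwise, so
\begin{equation*}
G(Z,A)\le 2^{-A}\sum_{n\le Z}2^{\omega(n)}.
\end{equation*}
It therefore suffices to prove $T(Z):=\sum_{n\le Z}2^{\omega(n)}\le 1.123\, Z\log(Z+2)$ uniformly for $Z\ge 1$, after which the lemma is immediate.

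To bound $T(Z)$, I would use the convolution identity $2^{\omega(n)}=\sum_{d\mid n}\mu^2(d)$ and swap the order of summation, getting
\begin{equation*}
T(Z)=\sum_{\substack{d\le Z\\\mu^2(d)=1}}\lfloor Z/d\rfloor\le \sum_{d\le Z}\lfloor Z/d\rfloor=\sum_{n\le Z}d(n),
\end{equation*}
where $d(n)$ is the divisor function. Then I would apply an explicit form of the Dirichlet hyperbola estimate, e.g.
\begin{equation*}
\sum_{n\le Z}d(n)\le Z\log Z+(2\gamma-1)Z+\sqrt{Z}\qquad(Z\ge 1),
\end{equation*}
which is standard. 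Because $1.123\,Z\log(Z+2)=1.123\,Z\log Z+\Theta(Z)$ has strictly larger leading coefficient than $Z\log Z+O(Z)$, a short algebraic comparison shows that the target inequality holds for every $Z$ beyond some explicit threshold $Z_0$ in the low hundreds.

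For the remaining range $Z<Z_0$ the bound is verified by direct computation. Since $T$ is a step function, constant on each interval $[n,n+1)$, while $Z\mapsto Z\log(Z+2)$ is strictly increasing, it suffices to check $T(n)\le 1.123\,n\log(n+2)$ at the finitely many integer points $n<Z_0$; this is a brief tabulation. The main obstacle is purely numerical: calibrating the explicit error constants so that $Z_0$ is small enough for the finite check to be light, which is why it is convenient to pass through $\sum_{n\le Z}d(n)$ rather than the sharper but less elementary $T(Z)=(6/\pi^2+o(1))Z\log Z$. The specific constant $c=1.123$ is essentially dictated by the worst-case ratio $T(n)/(n\log(n+2))$ at small $n$, which sits just below $1.123$, so there is no room to shrink it without a sharper input.
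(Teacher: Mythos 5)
Your proposal is correct and takes essentially the same route as the paper: both reduce the problem to the explicit Dirichlet divisor-sum bound $\sum_{n\le Z}d(n)\le Z\log Z+(2\gamma-1)Z+O(\sqrt Z)$ and then verify small $Z$ numerically. Your detour through $\sum_{n\le Z}2^{\omega(n)}$ via the pointwise inequality $\mathbf{1}[\omega(n)\ge A]\le 2^{\omega(n)-A}$ is equivalent to the paper's direct observation that $\omega(n)\ge A$ forces $d(n)\ge 2^A$, so both arrive at $G(Z,A)\le 2^{-A}\sum_{n\le Z}d(n)$ and finish identically.
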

    \begin{proof}
    
        This is \cite[Lemma 1]{Elem_upperbound}.

        Denote $d(n) \vcentcolon = \sum_{d | n} 1$.
        If $\omega(n) \ge A$, then $d(n) \ge 2^A$ and hence
        \[
        2^{A} G(Z, A) \le \sum_{n \le Z} d(n).
        \]
        \cite[Theorem 1.1]{Dir_bound} gives for $Z \ge 2$
        \[
        (Z \log Z)^{-1} \sum_{n \le Z} d(n) \le 1 + \frac{2 \gamma - 1}{\log Z} + \frac{1}{\sqrt{Z} \log Z}.
        \]
        This implies for $Z \ge 30$ that
        \[
        \sum_{n \le Z} d(n) \le 1.123 \, Z \log(Z + 2),
        \]
        and for smaller $Z$ this can be checked explicitly.
        
    \end{proof}

    Let us denote
    \[
    F(N, k) \vcentcolon = \sum_{\substack{\Omega(n) = k \\ P(n) \le N}} \frac{1}{n}.
    \]

    \begin{lemma} \label{copy}
    \[
    N_k(x) \le \frac{x}{2^k} + c \, x \log(x+2)  \sum_{j = 0}^k 2^{j-k} F(2^k, j).
    \]
    \end{lemma}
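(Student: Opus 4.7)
The plan is to attach to each $n \le x$ with $\Omega(n) = k$ the decomposition $n = a b$, where $a = \prod_{p\mid n,\, p \le 2^k} p^{\nu_p(n)}$ is the largest $2^k$-smooth divisor of $n$ and $b = n/a$ has all prime factors exceeding $2^k$ (with $b = 1$ when $n$ is itself $2^k$-smooth). Writing $j = \Omega(a)$ gives $\Omega(b) = k - j$ and $j \in \{0, 1, \ldots, k\}$, and then I would split the count $N_k(x)$ according to this stratification.

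The case $j = k$ corresponds to $n$ being $2^k$-smooth with $\Omega(n) = k$; such $n$ is a product of $k$ primes each $\le 2^k$, so the entire set of these integers is finite (of size depending only on $k$), and once $x$ is large enough its cardinality is bounded by $x/2^k$. This is the source of the first term in the lemma (for smaller $x$ the $j = k$ entry of the outer sum handles the count, since $F(2^k, k) \ge 1/2^k$).

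For $j < k$ I would fix an admissible $a$ with $\Omega(a) = j$ and $P(a) \le 2^k$ and count the valid $b \le x/a$ with $p(b) > 2^k$ and $\Omega(b) = k - j \ge 1$. The key estimate is
\[
\bigl|\{b \le Y : p(b) > 2^k,\ \Omega(b) = k - j\}\bigr| \le c \cdot 2^{-(k-j)}\, Y \log(Y + 2),
\]
applied with $Y = x/a$. I would reduce this to Lemma \ref{c} by injecting such $b$'s into integers of $\omega$-value at least $k - j$: for squarefree $b$ this is immediate since $\omega(b) = \Omega(b) = k - j$, and for non-squarefree $b$ one peels off the squarefull part (whose primes all exceed $2^k$) and sums geometrically, preserving the $2^{-(k-j)}$ weight.

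Summing over $a$ stratified by $j$ then yields
\[
\sum_{j = 0}^{k - 1} c\cdot 2^{-(k - j)} \sum_{\substack{\Omega(a) = j\\ P(a) \le 2^k}} \frac{x}{a} \log\!\Bigl(\frac{x}{a} + 2\Bigr) \;\le\; c\, x \log(x + 2) \sum_{j = 0}^{k - 1} 2^{j - k}\, F(2^k, j),
\]
and combining with the $j = k$ piece (absorbed either into $x/2^k$ or into the $j = k$ term of the outer sum) produces the stated inequality. The main technical obstacle is the $2^{-(k - j)}$-weighted bound on $2^k$-rough $b$'s with $\Omega(b) = k - j$ in the non-squarefree case, where one has to control the contribution of repeated primes; the squarefree case falls out of Lemma \ref{c} directly, and the threshold $2^k$ is matched to the exponent $k - j$ so that the repeated-prime contribution is sparse enough for the accounting to close.
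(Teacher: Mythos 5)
Your decomposition $n = ab$ into the $2^k$-smooth part $a$ and the rough part $b$, followed by Lemma \ref{c} with $A = k-j$ and summation over $a$ against $F(2^k,j)$, is the same skeleton as the paper's proof; the gap is in your treatment of the non-squarefree rough parts. You assert, for each fixed $a$, the bound
\[
\bigl|\{b \le Y : p(b) > 2^k,\ \Omega(b) = k-j\}\bigr| \le c\, 2^{-(k-j)} Y \log(Y+2)
\]
with the \emph{same} constant $c$ as in Lemma \ref{c}, handling squarefree $b$ by Lemma \ref{c} and non-squarefree $b$ by ``peeling off the squarefull part and summing geometrically, preserving the $2^{-(k-j)}$ weight.'' This does not close: Lemma \ref{c} applied to the squarefree $b$ already consumes the entire budget $c\,2^{-(k-j)}Y\log(Y+2)$, and the peeling step then adds a strictly positive surplus — carrying it out (writing $b = uv$ with $v$ the powerful part, $\Omega(v)=t\ge 2$, and applying Lemma \ref{c} to $u$ with $A = k-j-t$) multiplies the main bound by roughly $\prod_{p>2^k}\bigl(1+\tfrac{4}{p(p-2)}\bigr)-1$, which is not negligible for small $k$. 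Summed over all smooth parts $a$, this surplus is a positive multiple of the main term $c\,x\log(x+2)\sum_j 2^{j-k}F(2^k,j)$, hence of order $x\log x$ for fixed $k$, and it cannot be absorbed into the only remaining slack, the term $x/2^k$ — which, moreover, you have already spent on the $j=k$ case. So your argument yields the inequality only with $c$ replaced by a strictly larger constant, not the statement as written.

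The paper avoids this by a global rather than per-$a$ removal: it discards at the outset every $n \le x$ divisible by $t^2$ for some integer $t > 2^k$; there are fewer than $\sum_{t > 2^k} x/t^2 < x/2^k$ such $n$, and this single global bound is the true source of the term $x/2^k$ (the fully smooth case $j=k$ needs no special treatment — it is covered by the $j=k$ term of the sum). For every surviving $n$ the rough part $n_2$ is automatically squarefree, so $\omega(n_2) = k - j$ exactly and Lemma \ref{c} applies with $A = k-j$ with no loss in the constant. Replacing your local peeling step by this global square-divisor removal repairs the argument and turns it into the paper's proof.
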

    \begin{proof}
        See \cite[Section 4]{Elem_upperbound}. Let us give an outline of the proof for the sake of completeness.
        
        Denote $S \vcentcolon = \mathbb{P}_k \cap [1, x]$. Let $S_1$ denote the set of positive integers $n$ for which $n \le x$ and there exist a positive integer $t$ such that $t > 2^k$ and $t^2 | n$. Let $S_2 \vcentcolon = S \setminus S_1$. Obviously $N_k(x) = |S| \le |S_1| + |S_2|$.

        We have
        \[
        |S_1| \le \sum_{t = 2^i + 1}^{\infty} \left[ \frac{x}{t^2} \right] < \frac{x}{2^i}.
        \]

        Each element $n \in S_2$ can be written in the form $n = n_1 n_2$, where $P(n_i) \le 2^k$, $p(n_2) > 2^k$. Then $n_2$ is square-free since $n \notin S_1$. Thus $\omega(n_2) = k - \Omega(n_1)$. We have
        \[
        |S_2| = \sum_{j = 0}^k \sum_{\substack{n_1 \le x \\ P(n_1) \le 2^k \\ \Omega(n_1) = j}}
        \sum_{\substack{n_2 \le x/n_1 \\ p(n_2) > 2^k \\ \omega(n_2) = k - j}} 1 \le 
        \sum_{j = 0}^k \sum_{\substack{n_1 \le x \\ P(n_1) \le 2^k \\ \Omega(n_1) = j}}
        \sum_{\substack{n_2 \le x/n_1 \\ \omega(n_2) = k - j}} 1.
        \]
        To estimate the inner sum we use Lemma \ref{c} with $Z = x/n_1$, $A = k - j$.
        We obtain
        \[
        |S_2| \le \sum_{j = 0}^{k} \sum_{\substack{n_1 \le x \\ P(n_1) \le 2^k \\ \Omega(n_1) = j}}
        c \, 2^{j - k} \frac{x}{n_1} \log \left( \frac{x}{n_1} + 2 \right) \le 
        c \, x \log(x+2)  \sum_{j = 0}^k 2^{j-k} F(2^k, j).
        \]
    \end{proof}

    \begin{proof}[Proof of Theorem \ref{N_k-constant}]
    Lemma \ref{improvement_const} gives $F(N, j) \le 2.486 \,\, 2^{-j} (\log N)^2$.
    Then Lemmas \ref{copy} and \ref{c} give us
        \[
        N_k(x) \le \frac{x}{2^k} + \frac{2.486 \, c \, x \log(x + 2)}{2^k} \sum_{j = 0}^{k} (\log 2^{k})^2 \le
        \frac{x}{2^k} + 1.342 \frac{k^3 x \log(x + 2)}{2^k}.
        \]
        If $k \ge 10$, then we can assume, that $x \ge 2^k$, otherwise $N_k(x) = 0$. In this case
        \[
        1 + 1.342 \, k^3 \log(x + 2) \le 1.35 \, k^3 \log(x).
        \]
        This proves Theorem \ref{N_k-constant} in case $k \ge 10$.
        
        If $2 \le k \le 9$, then $k^3 / 2^k \ge 1$ and Theorem \ref{N_k-constant} is trivial.

        Finally, in case $k = 1$ the inequality \ref{le ?} is trivial if $\log x > 2$ and for smaller $x$ it can be easily checked.
    \end{proof}

\textbf{Acknowledgements.} I am grateful to my research advisor A. Kalmynin for valuable discussions.

\bibliography{pr1}
\bibliographystyle{plainurl}

\end{document}